\documentclass[11pt, reqno]{amsart}

\usepackage[top=3.75cm, bottom=3cm, left=3cm, right=3cm]{geometry}
\usepackage{amsmath}
\usepackage{amsthm}
\usepackage{amsfonts}
\usepackage{amssymb}
\usepackage{mathtools}
\usepackage{bm}
\usepackage[colorlinks=true, citecolor=darkblue, urlcolor=darkblue, linkcolor=darkblue, linktocpage = true]{hyperref}
\usepackage{color}
\usepackage[all,cmtip]{xy}
\usepackage{enumitem}
\usepackage{tikz}  
\usepackage{graphicx}
\usepackage{scalerel}
\usepackage{comment}
\usepackage[mathscr]{eucal}
\usepackage{stmaryrd}
\usepackage{tikz-cd}

\definecolor{darkblue}{rgb}{0,0,.75}

\numberwithin{equation}{section}

\theoremstyle{plain}
\newtheorem{theorem}{Theorem}[section]
\newtheorem{lemma}[theorem]{Lemma}
\newtheorem{proposition}[theorem]{Proposition}
\newtheorem{corollary}[theorem]{Corollary}

\theoremstyle{definition}

\newtheorem{definition}[theorem]{Definition}

\newtheorem{remark}[theorem]{Remark}
\newtheorem{notation}[theorem]{Notation}
\newtheorem{warning}[theorem]{Warning}
\newtheorem{construction}[theorem]{Construction}

\makeatletter
\newtheoremstyle{italicsname}
 {3pt}
 {3pt}
 {\itshape}
 {}
{\bf}
 {.}
 {.5em}
 {\thmname{#1}\thmnumber{\@ifnotempty{#1}{ }#2}%
 \thmnote{ {\the\thm@notefont(#3)}}}
\makeatother
\theoremstyle{italicsname}

\setlist[itemize]{topsep=5pt,itemsep=3pt}
\setlist[enumerate]{topsep=5pt,itemsep=3pt}

\newcommand{\set}[1]{\left\{ \, #1 \, \right\}}

\newcommand{\Dperf}{\mathrm{D}_{\mathrm{perf}}}
\newcommand{\Dqc}{\mathrm{D}_{\mathrm{qc}}}

\DeclareMathOperator{\Forg}{Forg}
\DeclareMathOperator{\Inf}{Inf}

\DeclareMathOperator{\Hdg}{Hdg}

\DeclareMathOperator{\colim}{colim}

\newcommand{\Set}{\mathrm{Set}}

\newcommand{\Sch}{\mathrm{Sch}}

\newcommand{\Grpd}{\mathrm{Grpd}}

\DeclareMathOperator{\Pic}{Pic}
\newcommand{\cPic}{\mathcal{P}\!{\it ic}}

\newcommand{\wtilde}{\widetilde}

\DeclareMathOperator{\Spec}{Spec}

\newcommand{\op}{\mathrm{op}}

\newcommand{\cHom}{\mathcal{H}\!{\it om}}
\DeclareMathOperator{\sh}{sh}

\DeclareMathOperator{\Hom}{Hom}

\DeclareMathOperator{\Ext}{Ext}
\DeclareMathOperator{\Aut}{Aut}
\DeclareMathOperator{\cAut}{\mathcal{A}{\it ut}}

\DeclareMathOperator{\HH}{HH}

\DeclareMathOperator{\cExt}{\mathcal{E}\!{\it xt}}
\DeclareMathOperator{\cHH}{\mathcal{HH}}

\newcommand{\id}{\mathrm{id}}

\newcommand{\ob}{\mathrm{ob}}

\DeclareMathOperator{\Br}{Br}

\DeclareMathOperator{\CH}{CH}

\newcommand{\rtop}{\mathrm{top}}

\newcommand{\an}{\mathrm{an}}

\newcommand{\Ktop}[1][]{\rK_{#1}^{\rtop}}

\DeclareMathOperator{\NS}{NS}

\newcommand{\et}{\mathrm{\acute{e}t}}
\newcommand{\tors}{\mathrm{tors}}

\newcommand{\ev}{\mathrm{ev}}

\newcommand{\bmu}{\bm{\mu}}

\DeclareMathOperator{\ch}{{ch}}

\newcommand{\cO}{\mathcal{O}}

\newcommand{\cC}{\mathscr{C}}
\newcommand{\cD}{\mathscr{D}}

\newcommand{\cM}{\mathcal{M}}
\newcommand{\cN}{\mathcal{N}}

\newcommand{\cX}{\mathcal{X}}
\newcommand{\cY}{\mathcal{Y}}

\newcommand{\rH}{\mathrm{H}}

\newcommand{\rK}{\mathrm{K}}

\newcommand{\rR}{\mathrm{R}}

\newcommand{\fp}{\mathfrak{p}}

\newcommand{\fm}{\mathfrak{m}}

\newcommand{\bC}{\mathbf{C}}

\newcommand{\bG}{\mathbf{G}}

\newcommand{\bZ}{\mathbf{Z}}

\newcommand{\bQ}{\mathbf{Q}}

\usepackage{xpatch}
\makeatletter   
\xpatchcmd{\@tocline}
{\hfil\hbox to\@pnumwidth{\@tocpagenum{#7}}\par}
{\ifnum#1<0\hfill\else\dotfill\fi\hbox to\@pnumwidth{\@tocpagenum{#7}}\par}
{}{}
\makeatother 

\makeatletter
\renewcommand\part{%
   \if@noskipsec \leavevmode \fi
   \par
   \addvspace{4ex}%
   \@afterindentfalse
   \secdef\@part\@spart}

\def\@part[#1]#2{%
    \ifnum \c@secnumdepth >\m@ne
      \refstepcounter{part}%
      \addcontentsline{toc}{part}{Part \thepart.\hspace{1em}#1}%
    \else
      \addcontentsline{toc}{part}{#1}%
    \fi
    {\parindent \z@ \raggedright
     \interlinepenalty \@M
     \normalfont
     \ifnum \c@secnumdepth >\m@ne
     \centering 
     \Large\bfseries \partname\nobreakspace\thepart     
       \nobreak. 
     \fi
     \Large \bfseries { #2}%
     \par}%
    \nobreak
    \vskip 3ex
    \@afterheading}
\def\@spart#1{%
    {\parindent \z@ \raggedright
     \interlinepenalty \@M
     \normalfont
     \huge \bfseries #1\par}%
     \nobreak
     \vskip 3ex
     \@afterheading}
\makeatother

\renewcommand{\thepart}{\Roman{part}}

\makeatletter 
\def\l@subsection{\@tocline{2}{0pt}{3pc}{6pc}{}} 
\makeatother

\begin{document}

\title{The semiregularity theorem for equivariant noncommutative varieties}

\author{Alexander Perry}
\address{Department of Mathematics, University of Michigan, Ann Arbor, MI 48109 \smallskip}
\email{arper@umich.edu}

\begin{abstract} 
We generalize the classical semiregularity theorem of Buchweitz and Flenner to the setting of noncommutative algebraic geometry, with group actions. 
This applies in particular to twisted derived categories, in which case it answers a question of Markman 
and streamlines part of his proof of the Hodge conjecture for abelian fourfolds. 
Along the way, we prove that for many finite group actions on derived categories of varieties, the invariant category is of geometric origin. 
\end{abstract}

\maketitle

\setcounter{tocdepth}{1}
\tableofcontents


\section{Introduction} 

A coherent sheaf $E$ on a variety $X$ is semiregular if the obstruction space $\Ext^2(E,E)$ for its deformations is controlled by Hodge theory, in the sense that the semiregularity map to the Hochschild homology group $\HH_{-2}(X)$ is injective.    
The classical semiregularity theorem of Buchweitz and Flenner \cite{BF} says that a coherent sheaf $E_0$ on the special fiber $X_0$ of a smooth proper family of complex varieties $X \to S$ can be deformed to the generic fiber, provided its Chern character remains of Hodge type along $S$. 

The goal of this paper is to prove a generalization of this theorem in which 
$X$ is allowed to be a ``noncommutative variety'', i.e. a semiorthogonal component of the derived category of a smooth proper family of varieties, and the condition on $E_0$ is relaxed to ``equivariant semiregularity'' in the presence of a suitable group action. 
For simplicity, we first state our result in the motivating case of twisted derived categories, 
without group actions. 

\begin{theorem}
\label{theorem-semiregularity-intro}
    Let $f \colon X \to S$ be a smooth proper family of complex varieties. 
    Let $0 \in S(\bC)$ be a point and let $E_0 \in \Dperf(X_0)$ be a semiregular perfect complex with $\Ext^{<0}(E_0,E_0)=0$. 
    Assume that 
    $B_0 \in \rH^2(X_0, \bQ(1))$ is an algebraic class such that 
    \begin{equation*} 
    w_0 = \exp(B_0) \cdot \ch(E_0) \in \bigoplus_{k \geq 0} \rH^{2k}(X_0, \bQ(k))
    \end{equation*} 
    remains Hodge along~$S$, i.e. lifts to a global section $w$ of the local system $\bigoplus_{k \geq 0} \rR^{2k}f_*\bQ(k)$. 
    Then: 
    \begin{enumerate}
        \item 
        The object $E_0$ deforms as a twisted perfect complex over an \'{e}tale neighborhood of $0$. 
        More precisely, there exists an \'{e}tale morphism $U \to S$, a point $u \in U(\bC)$ mapping to $0 \in S(\bC)$, a Brauer class $\alpha \in \Br(X_{U})$ whose restriction $\alpha_u \in \Br(X_u)$ vanishes, and an $\alpha$-twisted perfect complex $F \in \Dperf(X_U, \alpha)$ whose restriction $F_u \in \Dperf(X_u, \alpha_u)$ satisfies $F_u \simeq E_0$ 
        (under the identification $(X_u, \alpha_u) = (X_0, 0)$).   
        \item The class $w_0$ remains algebraic along~$S$, i.e. for every point $s \in S(\bC)$ the fiber  
        \begin{equation*}
        w_s \in \bigoplus_{k \geq 0} \rH^{2k}(X_s,\bQ(k))
        \end{equation*} 
        of $w$ over $s$ is algebraic. 
    \end{enumerate}
\end{theorem}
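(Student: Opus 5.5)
Throughout, we may work locally around $0$ and shrink $S$ freely, so we assume $S$ is connected and, after base change, that $f$ is smooth proper over a smooth affine base. The idea is to first choose a Brauer class $\alpha$ over an étale neighborhood whose "B-field" lifts $B_0$, and then reduce to a semiregularity theorem for a noncommutative family. The conclusion is then deduced from its Hodge-theoretic input.

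\textbf{Step 1: the Brauer class.} Write $B_0 = c_1(L_0)/n$ for a line bundle $L_0$ on $X_0$ and an integer $n$; this is possible since $B_0$ is algebraic. The image of $B_0$ under $\rH^2(X_0,\bQ(1)) \to \rH^2(X_0,\bQ/\bZ(1))$ lies in $\rH^2(X_0,\bmu_n)$. Since $R^2 f_*\bmu_n$ is a finite locally constant étale sheaf, this class extends to a class in $\rH^2(X_U,\bmu_n)$ over an étale neighborhood $(U,u)\to(S,0)$. Its image is a Brauer class $\alpha \in \Br(X_U)$. Its restriction $\alpha_u$ vanishes, because the $\bmu_n$-class comes from $\Pic(X_0)$ via the Kummer sequence.

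Twisting by a $\bmu_n$-gerbe (or by a Brauer–Severi/Azumaya algebra) yields the noncommutative family $\Dperf(X_U,\alpha)$. This is a semiorthogonal component (the weight-one part) of $\Dperf$ of a smooth proper Brauer–Severi scheme, or of the $\bmu_n$-gerbe $\cX \to X_U$. The trivialization of $\alpha_u$ identifies $\Dperf(X_u,\alpha_u)\simeq \Dperf(X_0)$, possibly after twisting by $L_0^{1/n}$. Under this identification the twisted Mukai vector / topological Chern character of $E_0$ becomes $\exp(B_0)\ch(E_0)$, up to the standard normalization.

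\textbf{Step 2: deformation via noncommutative semiregularity.} Apply the paper's main semiregularity theorem for noncommutative varieties (the version for semiorthogonal components of smooth proper families). We need that $E_0$, viewed in the fiber category, is semiregular in the categorical sense, meaning the map $\Ext^2(E_0,E_0)\to \HH_{-2}$ is injective. This agrees with the classical notion, since Hochschild homology and the semiregularity map are invariant under the fiberwise equivalence. We also need that its topological Chern character (in $\Ktop[0]$ of the category, or in the periodic cyclic / Betti realization) stays Hodge along $U$. This follows from $w$: the Betti class of the twisted category is identified with the $B$-twisted cohomology of $X$, and $w$ provides the flat lift. With $\Ext^{<0}=0$, the theorem yields a deformation of $E_0$ over the formal neighborhood, and hence over all infinitesimal neighborhoods. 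Artin algebraization, using the moduli of simple-type/universally gluable objects in a proper noncommutative family (Lieblich, Toën–Vaquié), then gives an actual object $F\in\Dperf(X_U,\alpha)$ after shrinking $U$ étale. This proves (1).

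\textbf{Step 3: algebraicity.} For $s$ in the image of $U$, the twisted Chern character of $F_s$ equals $w_s$ multiplied by $\exp$ of a $B$-field for $\alpha_s$. Since $\alpha_s$ is torsion, it is represented by $c_1$ of a line bundle divided by $n$ on a finite étale-local Brauer–Severi cover, or more directly via $\ch$ of the Azumaya-twisted sheaf $F_s^{\oplus n}\otimes \mathcal{E}$ with $\mathcal E$ a locally free $\alpha^{-1}$-twisted sheaf. This shows $w_s$ is algebraic. Algebraicity then spreads to all of $S$: the locus where a flat section is Hodge/algebraic is a countable union of closed algebraic subsets, and we have shown it contains an open set. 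Alternatively, we can repeat the argument at every point reached along chains of étale neighborhoods, using that $S$ is connected.

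The main obstacle is Step 2, specifically identifying the categorical topological Chern character and semiregularity map of the twisted category with $\exp(B)\ch$ and the classical Buchweitz–Flenner map, compatibly in families. Only then does the Hodge hypothesis on $w$ match the hypothesis of the noncommutative theorem. I would handle this via the gerbe description and a Hochschild–Kostant–Rosenberg computation on the weight-one component.
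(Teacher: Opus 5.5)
This is essentially the paper's proof: you extend the Kummer class of $L_0$ from $\rH^2_{\et}(X_0,\bmu_n)$ to an \'{e}tale neighborhood, use Hotchkiss's variation (Theorem~\ref{theorem-mukai-variation} with Remark~\ref{remark-trivial-brauer-class}) to turn $w$ into a section $v$ of $\Ktop[0](\Dperf(X_U,\alpha)/U)\otimes\bQ$ through the class of $E_0$, and apply Theorem~\ref{theorem-semiregularity-categorical}, which already produces the \'{e}tale-local object $F$ without a separate Artin algebraization (for the extension step, note that a section of $\rR^2f_*\bmu_n$ need not come from $\rH^2_{\et}(X_U,\bmu_n)$, so argue by proper base change over the strict henselization at $0$, as in the paper's appeal to de Jong--Perry). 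Your Step~3 makes explicit two points the paper leaves implicit, namely that $\gamma_s$ carries algebraic classes to algebraic ones and that algebraicity on the open image of $U$ propagates to all of $S$: for the first, your twisted-sheaf sketch works once you note that the $(-B_s)$-twisted Chern character of $\mathcal{E}$ is the $n$-th root, with leading term $\mathrm{rk}\,\mathcal{E}$, of the algebraic class $\ch(\mathcal{E}^{\otimes n})$, so that $w_s$ is $\ch(F_s\otimes\mathcal{E})$ divided by an invertible algebraic class; for the second, use the countable-union or a specialization argument, not the ``chains of \'{e}tale neighborhoods'' variant, which would need semiregularity at the new points.
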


This answers a question of Markman \cite[Conjecture~7.3.9]{markman}, who in his beautiful recent work on the Hodge conjecture for abelian fourfolds proved and applied a version of Theorem~\ref{theorem-semiregularity-intro} in the case where $f \colon X \to S$ is a family of abelian varieties and $E_0$ is a coherent sheaf. 
The added flexibility of \emph{complexes} $E_0$ in our theorem is quite useful in practice. 
In particular, this can be used to simplify some of the arguments in \cite{markman}, which was the main motivation for this work; we explain this in \S\ref{section-simplifying-markman} below, after describing some other relevant results.

Theorem~\ref{theorem-semiregularity-intro} is a special case of a much more general semiregularity theorem (Theorem~\ref{theorem-semiregularity-categorical}) proved in the body of the paper, where the role of $X$ is replaced by a smooth proper $S$-linear category $\cC$ of geometric origin, meaning that $\cC$ admits a realization as a semiorthogonal component of a smooth proper $S$-scheme. 
This also yields, for example, a version of Theorem~\ref{theorem-semiregularity-intro} where the given complex $E_0$ may itself be twisted. 
In turn, our general semiregularity theorem 
is based on a smoothness result for the moduli space of objects in $\cC$ at a semiregular point (Theorem~\ref{theorem-semiregular-smooth}), 
which generalizes the smoothness of moduli spaces of objects in CY2 categories proved in \cite{IHC-CY2} (see Remark~\ref{remark-semiregular-smooth-theorem}\eqref{remark-IHC-CY2}). 
The key ingredient in the proof of our smoothness result is Pridham's work on semiregularity \cite{pridham}. 

Finally, we prove an equivariant version of our general semiregularity result, extending the scope of its applications. 
The semiregularity of an object $E \in \cC$ in a $\bC$-linear category $\cC$ is a strong condition which is difficult to check in practice. 
However, in the presence of a suitable finite group of symmetries $G$ acting on $\cC$, it turns out that a more tractable condition suffices. 
Namely, we introduce two notions of equivariant semiregularity for an object $E \in \cC$: 
\begin{itemize} 
\item $E$ is \emph{$G$-semiregular} if it admits a $G$-equivariant structure and the semiregularity map $\Ext^2(E,E) \to \HH_{-2}(\cC)$ is injective when restricted to the \mbox{$G$-invariant} subspace $\Ext^2(E,E)^G$. 
\item $E$ is \emph{weakly $G$-semiregular} if it admits a $G$-equivariant structure such that the corresponding object $\wtilde{E} \in \cC^G$ of the invariant category is semiregular. 
\end{itemize} 
As the names suggest, $G$-semiregularity implies weak $G$-semiregularity (Lemma~\ref{lemma-G-semiregularity}). 
In practice, these conditions may be significantly easier to verify than semiregularity, especially when the symmetry group $G$ is large.  

In Theorem~\ref{theorem-semiregularity-categorical-equivariant} we deduce a $G$-equivariant version of our semiregularity theorem, which says that for suitable group actions, weak $G$-semiregularity has all of the same consequences as ordinary semiregularity. 
The precise general statement is somewhat technical, 
but for abelian varieties we obtain a clean statement: 

\begin{theorem}
\label{theorem-semiregularity-equivariant-intro}
Let $f \colon X \to S$ be a smooth proper family of complex abelian varieties.  
Let \mbox{$0 \in S(\bC)$} be a point. 
Let $G \subset X_0 \times X_0^{\vee}$ be a finite group which acts on $\Dperf(X_0)$ 
(through translations and tensoring by line bundles). 
Let $E_0 \in \Dperf(X_0)$ be a weakly $G$-semiregular perfect complex with $\Ext^{<0}(E_0,E_0)=0$.  
Assume that $B_0 \in \rH^2(X_0, \bQ(1))$ is an algebraic class such that 
\begin{equation*} 
    w_0 = \exp(B_0) \cdot \ch(E_0) \in \bigoplus_{k \geq 0} \rH^{2k}(X_0, \bQ(k))
    \end{equation*} 
remains Hodge along $S$. 
Then: 
\begin{enumerate}
    \item $E_0$ deforms as a twisted perfect complex over an \'{e}tale neighborhood of $0$. 
    \item The class $w_0$ remains algebraic along $S$. 
\end{enumerate} 
\end{theorem}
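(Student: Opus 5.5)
The plan is to deduce the statement from the general equivariant semiregularity theorem (Theorem~\ref{theorem-semiregularity-categorical-equivariant}) by passing to the invariant category. Concretely, I would show that the invariant category $\Dperf(X_0)^G$ is itself of geometric origin and deforms along $S$, and then apply the non-equivariant categorical theorem (Theorem~\ref{theorem-semiregularity-categorical}) to the semiregular object $\wtilde{E}_0 \in \Dperf(X_0)^G$.

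\textbf{Step 1: lift the group action over an \'etale neighborhood.} The finite group $G \subset X_0 \times X_0^\vee$ consists of torsion points. After an \'etale base change $U \to S$ trivializing the finite \'etale group schemes $X[n] \times X^\vee[n]$ (with $n = |G|$), $G$ extends to a constant subgroup $G_U \subset X_U \times_U X_U^\vee$. An element $(x,L) \in G$ acts by $t_x^*(-) \otimes L$. These autoequivalences commute only up to the Heisenberg/Weil pairing, so $G$ acts on $\Dperf(X_U)$ only after extending $G$ by a central $\bG_m$, or $\bmu_n$. The resulting obstruction is a Brauer-type class, which is why twisted complexes appear in conclusion (1). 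Since we are given an action at $0$, the corresponding $2$-cocycle is locally constant and extends over $U$, possibly after further \'etale localization.

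\textbf{Step 2: geometric origin of the invariant category.} The invariant category $\Dperf(X_U)^{G_U}$ should be identified with a twisted derived category of a quotient, as in the result promised in the abstract. The translation part $G \cap X_0$ gives a quotient abelian scheme $Y = X_U/(G\cap X)$. The line-bundle part, together with the twisting cocycle, gives a gerbe or Brauer class $\beta$ on $Y$, and descent then identifies $\Dperf(X_U)^{G_U} \simeq \Dperf(Y,\beta)$. Twisted derived categories of smooth proper schemes are of geometric origin via Brauer--Severi schemes, so the invariant category is a smooth proper $U$-linear category of geometric origin.

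\textbf{Step 3: apply the categorical theorem.} By weak $G$-semiregularity, $\wtilde{E}_0$ is semiregular in the fiber at $u$. We also need $\Ext^{<0}(\wtilde E_0,\wtilde E_0) = (\Ext^{<0}(E_0,\oplus_g g^*E_0))^G$ to vanish. Here I expect the main obstacle: this is not automatic from $\Ext^{<0}(E_0,E_0)=0$. I would try to arrange it through the precise hypotheses of Theorem~\ref{theorem-semiregularity-categorical-equivariant}, which presumably only requires the ordinary Ext condition, with the averaging over $G$ absorbed there. The Hodge condition also transfers: the Mukai vector of $\wtilde E_0$ in the topological K-theory or cohomology of $(Y,\beta)$ is obtained from $\exp(B_0)\ch(E_0)$ by pushforward along the isogeny, twisted by $\beta$. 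Pushforward along an isogeny preserves flat sections and algebraicity. The categorical theorem then deforms $\wtilde E_0$ to a twisted object over $U$. Forgetting the equivariant structure, i.e. pulling back along $X_U \to Y$, yields a twisted deformation $F$ of $E_0$ with $F_u \simeq E_0$, because the forgetful functor sends $\wtilde E_0$ to $E_0$. This gives (1).

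\textbf{Step 4: algebraicity.} For (2), algebraicity of the deformed class on $Y_s$ pulls back to algebraicity of $w_s$ on $X_s$. Pullback along the isogeny is injective rationally and compatible with the twists $\exp(B)$, so the pullback recovers $w_s$ up to a nonzero scalar, namely $|G|$ times the multiplicity of the forgetful map. Hence every fiber $w_s$ is algebraic.
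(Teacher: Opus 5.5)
There is a genuine gap: you never use $B_0$ to twist the family. As a result, the hypothesis that $w_0=\exp(B_0)\cdot\ch(E_0)$ remains Hodge is never turned into a flat section of the K-theory local system of the category in which you deform. Your argument runs entirely in the untwisted family $\Dperf(X_U)$ and its invariant category. There the class of $E_0$ corresponds to $\ch(E_0)$, and the deformation $\Forg(\wtilde F)$ you produce lands in $\Dperf(X_U)$. The sentence in Step~3 claiming that the class of $\wtilde E_0$ is obtained from $\exp(B_0)\cdot\ch(E_0)$ ``twisted by $\beta$'' is where this breaks: a Brauer class $\beta$ produced by the group action has nothing to do with $B_0$.

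Already for $G=1$ the argument proves something false. Take $E_0=L_0$ a line bundle and $B_0=-c_1(L_0)$. Then $w_0=1$ remains Hodge and $L_0$ is semiregular. Your argument would deform $L_0$ to an untwisted complex over an \'{e}tale neighborhood, forcing $\ch(L_0)$ to remain Hodge, which is not assumed and fails in general.

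Relatedly, the twist in conclusion~(1) does not come from the Heisenberg pairing. The hypothesis gives an honest $G$-action on $\Dperf(X_0)$, and such an action deforms \'{e}tale locally to an honest action on $\Dperf(X_U)$ (cf.\ Proposition~\ref{proposition-deform-group-actions}) with no Brauer class appearing. The twist comes from $B_0$. The paper writes $B_0=b_0/n$ and extends $\exp(b_0/n)\in\rH^2_{\et}(X_0,\bmu_n)$ \'{e}tale locally to the class of a $\bmu_n$-gerbe $\cX\to X$, with Brauer class $\alpha$ satisfying $\alpha_0=0$. It then uses Theorem~\ref{theorem-mukai-variation} and Remark~\ref{remark-trivial-brauer-class} to obtain a flat section $v$ of $\Ktop[0](\Dperf(X,\alpha)/S)\otimes\bQ$ through the class of $E_0$. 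The category to deform in is $\Dperf(X,\alpha)^G$, not $\Dperf(X_U)^G$.

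Once you work with $\Dperf(X,\alpha)$, the real difficulty appears, and your Steps~1--2 contain no substitute for it. Translations and tensoring by torsion line bundles do not act directly on $\alpha$-twisted complexes when $\alpha$ is nontrivial on nearby fibers. One must lift the translations to automorphisms of the gerbe $\cX$, and these are only determined up to $\Pic_{X/S}[n]$. This is why the paper:
\begin{enumerate}
\item uses Olsson's Theorem~\ref{theorem-Aut-mun-gerbe};
\item passes to a finite group $G'\twoheadrightarrow G$ and enlarges $n$ (Proposition~\ref{proposition-lifting-action-D-to-cX});
\item deforms the compatible actions (Corollary~\ref{corollary-lifting-actions-in-CY-family});
\item realizes $\Dperf(X,\alpha)^G$ as a component of $\Dperf(\cX/G')\simeq\Dperf(M,\beta)$ with $M\to X/H$ finite \'{e}tale (Proposition~\ref{lemma-DperfXalphaG-geometric}).
\end{enumerate}
Even in the untwisted case, your $Y=X_U/(G\cap X)$ with a Brauer class is wrong when $G$ has line-bundle components. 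Tensoring by a torsion line bundle $L$ gives the derived category of the cyclic \'{e}tale cover defined by $L$.

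The flat section $\wtilde v$ through the class of $\wtilde E_0$ also needs more than ``pushforward along an isogeny'', since the equivariant class carries more information than the class of $E_0$. The paper shows that $G$ acts on $\Dperf(X_0)$, and $G^\vee$ acts on $\Dperf(X_0)^G\simeq\Dperf(N,\gamma)$, through identity components of the autoequivalence groups. This uses that $N$ is connected, hence an abelian variety isogenous to $X_0/H_0$. Then by Lemmas~\ref{lemma-Ktop-G} and~\ref{lemma-G-action-Ktop-trivial} the functor $\Forg$ identifies the rational K-theory local systems, and one can take $\wtilde v=\Forg^{-1}(v)$.

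Finally, what you flag as the main obstacle is not one: $\Ext^i(\wtilde E_0,\wtilde E_0)\cong\Ext^i(E_0,E_0)^G$, so $\wtilde E_0$ is automatically gluable.
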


As a technical ingredient of independent interest, for the proof of Theorem~\ref{theorem-semiregularity-equivariant-intro} we show that many finite group actions on the derived category of a variety can be geometrized, in the sense that they can be realized via automorphisms of a suitable gerbe with finite stabilizers; as a consequence, we show that in many cases the invariant category for the group action is of geometric origin. 
We defer the precise statements of these results to \S\ref{section-geometrization}. 

Our results on equivariant semiregularity were inspired by an argument from \cite{markman}, discussed next. 

\subsection{Relation to Markman's work}
\label{section-simplifying-markman}
The key argument 
in Markman's work \cite{markman} can be summarized as follows. 
He constructs a certain complex $E_0 \in \Dperf(X_0)$ on the special fiber of a family $X/S$ of abelian sixfolds together with an algebraic class $B_0 \in \rH^2(X_0, \bQ(1))$ such that: 
\begin{enumerate}
    \item \label{E0-markman} By construction, there exists an abelian sixfold $Y_0$ (the square of the Jacobian of a genus $3$ curve), a coherent sheaf $F_0$ on $Y_0$ which is $G$-semiregular for a finite subgroup $G \subset Y_0$ (acting by translations), and a derived equivalence $\Phi \colon \Dperf(Y_0) \simeq \Dperf(X_0)$ such that $E_0 = \Phi(F_0)$ (see \cite[Lemma~8.4.1]{markman}). 
    \item \label{w0-markman} The class $w_0 = \exp(B_0) \cdot \ch(E_0)$ remains Hodge along $S$. 
\end{enumerate}
In these terms, the main result of \cite{markman} boils down to proving that $w_0$ remains algebraic along $S$. 

Markman's version of the semiregularity theorem does not directly apply in the above setting, because it requires the object $E_0$ to be a coherent sheaf (whereas $E_0$ above is a perfect complex) and semiregular (not only $G$-semiregular). 
To surmount these issues, Markman devises two clever solutions that occupy \cite[\S9]{markman}. 
For the first issue, he shows that up to a shift, $E_0$ is the derived dual of a coherent sheaf $A_0$ on $X_0$.  
For the second, the idea is to descend $E_0$ (and hence $A_0$) to another abelian sixfold $X'_0$ on which it becomes semiregular; naively, one would like to take $X'_0$ to be the quotient of $X_0$ by $G$, but $G$ only naturally acts on $\Dperf(X_0)$, so this requires some maneuvering. 
The details of these arguments are ad hoc and quite involved. 

Our Theorem~\ref{theorem-semiregularity-equivariant-intro}, on the other hand, applies directly in the situation of \eqref{E0-markman} and~\eqref{w0-markman} above, and thus renders such gymnastics unnecessary. 
Indeed, by a result of Rouquier \cite{rouquier}, the identity component of the group of autoequivalences of the abelian variety $X_0$ is identified with $X_0 \times X_0^{\vee}$. 
Hence the subgroup $G \subset Y_0$ translates via the equivalence $\Phi$ to a subgroup $G \subset X_0 \times X_0^{\vee}$ acting on $\Dperf(X_0)$ for which $E_0$ is $G$-semiregular. 
Moreover $\Ext^{<0}(E_0,E_0) = 0$ because $E_0$ is the image of a coherent sheaf under a derived equivalence. 
Therefore, we may apply Theorem~\ref{theorem-semiregularity-equivariant-intro} to conclude that $w_0$ remains algebraic along $S$. 

In this way, our work puts Markman's into a more robust framework, which streamlines part of his proof. 
We hope that this will be useful for future applications of semiregularity to the Hodge conjecture. 

\subsection{Organization of the paper}
In \S\ref{section-semiregularity} we define (equivariant) semiregularity in our context. 
In \S\ref{section-hodge-theory} we review some preliminaries about the Hodge theory of categories that are needed for the formulation of our results. 
In \S\ref{section-smoothness-moduli} we prove our result on the smoothness of the moduli space of objects at a semiregular point. 
In \S\ref{section-geometrization} we prove our geometrization results for group actions on derived categories. 
Finally, in \S\ref{section-semiregularity-theorems} we combine everything to prove our general noncommutative semiregularity results, as well as Theorems~\ref{theorem-semiregularity-intro} and~\ref{theorem-semiregularity-equivariant-intro} from above for twisted derived categories.  

\subsection{Conventions}
A variety over a field $k$ is a scheme over $k$ which is geometrically integral, separated, and finite type over $k$. 
For a scheme $S$, $\Dqc(S)$ denotes the unbounded derived category of quasi-coherent sheaves, and $\Dperf(S) \subset \Dqc(S)$ denotes the subcategory of perfect complexes. 
All such categories are considered in the enhanced sense, as stable $\infty$-categories. 
All functors are derived. 

We freely use the language of ``categories linear over a base'' described in 
 \cite{NCHPD} and summarized in \cite[\S4]{PI-abelian3folds} and \cite[\S2]{IHC-CY2}. 
Briefly, if $S$ is a base scheme, then an $S$-linear category $\cC$ is a (suitably enhanced) triangulated category with an action of the monoidal category $\Dperf(S)$ of perfect complexes on $S$. 
The key example of such a category is an $S$-linear semiorthogonal component $\cC \subset \Dperf(X)$, where $X$ is an $S$-scheme. 

\subsection{Acknowledgements} 
I am very grateful to James Hotchkiss for sharing several insights that led to the results in \S\ref{section-geometrization} --- in particular for explaining to me Construction~\ref{remark-Aut-bmun-act-DXalpha} and Lemma~\ref{lemma-cX-mod-G-geometric} --- as well as for helpful comments on and corrections to the first version of this paper. 
Special thanks are also due to Eyal Markman for a number of conversations around the topic of the Hodge conjecture for abelian varieties;  
this paper owes its existence to his inspiring work~\cite{markman}. 

This work was partially supported by NSF grants DMS-2052750 and DMS-2143271.


\section{Semiregularity} 
\label{section-semiregularity} 

\subsection{Semiregularity morphisms}
If $\cC$ is a linear category over a base scheme $S$, $\Phi$ is an $S$-linear endofunctor of $\cC$, and $E \in \cC$ is an object, then \cite[Construction 5.10]{PI-abelian3folds} (see also \cite[\S3.2]{IHC-CY2}) defines a \emph{semiregularity morphism} 
\begin{equation*}
\sigma_{E,\Phi} \colon \cHom_S(E, \Phi(E)) \to \cHH_*(\cC/S, \Phi) , 
\end{equation*} 
where $\cHom_S(E, \Phi(E)) \in \Dqc(S)$ is the mapping complex\footnote{In the case where $\cC \subset \Dperf(X)$ is an $S$-linear semiorthogonal component for an $S$-scheme $\pi \colon X \to S$, we have $\cHom_S(E,E) = \pi_* \cHom_X(E,E)$ where $\pi_*$ and $\cHom_X$ are considered as derived functors.} and $\cHH_*(\cC/S, \Phi) \in \Dqc(S)$ is the relative Hochschild homology with coefficients in $\Phi$. 
For any $i \in \bZ$, taking degree $i$ cohomology sheaves gives a morphism which we denote by 
\begin{equation*}
\sigma^i_{E,\Phi} \colon \cExt_S^i(E,\Phi(E)) \to \cHH_{-i}(\cC/S, \Phi). 
\end{equation*}

\begin{remark}
    In \cite{PI-abelian3folds, IHC-CY2}, the morphism $\sigma_{E,\Phi}$ is instead denoted by $\ch_{E,\Phi}$ and called the Chern character, motivated by the fact that
    when $S = \Spec(k)$ is a point and $\Phi = \id_{\cC}$, evaluating $\sigma_{E,\Phi}^0$ on $\id_E$ recovers the $\HH_0(\cC)$-valued Chern character of $E$. 
    Here, we instead emphasize the relation to classical semiregularity maps.
\end{remark}

When $\Phi = - \otimes F$ for an object $F \in \Dperf(S)$, we simplify notation by writing 
\begin{align*}
    \sigma_{E,F} \colon \cHom_S(E, E \otimes F) \to \cHH_*(\cC/S, F) , \\ 
    \sigma^i_{E,F} \colon \cExt_S^i(E,E \otimes F) \to \cHH_{-i}(\cC/S, F), 
\end{align*}
for the above morphisms. 
Similarly, when $\Phi = \id_{\cC}$, we write 
\begin{align*}
    \sigma_{E} \colon \cHom_S(E, E ) \to \cHH_*(\cC/S) , \\ 
    \sigma^i_{E} \colon \cExt_S^i(E,E) \to \cHH_{-i}(\cC/S). 
\end{align*}

\begin{remark}
\label{remark-HH-F}
    For $F \in \Dperf(S)$, it follows directly from the definition of Hochschild homology \cite[Definition 3.1]{IHC-CY2} that there is an equivalence  
    \begin{equation*} 
    \cHH_*(\cC/S, F) \cong \cHH_*(\cC/S) \otimes F. 
    \end{equation*} 
    In other words, the theory with coefficients in an object $F \in \Dqc(S)$ reduces to the case of trivial coefficients. 
\end{remark}

\begin{remark}
\label{remark-semiregularity-map-functorial}
    The semiregularity morphism is suitably functorial. 
    This is simplest to state in the case where the coefficients $\Phi = \id_{\cC}$ are trivial. 
    Then if $\Psi \colon \cC \to \cD$ is a functor between $S$-linear categories, for any object there is an induced commutative diagram 
    \begin{equation*}
        \xymatrix{
        \cHom_S(\Psi(E), \Psi(E)) \ar[rr]^{ \sigma_{\Psi(E)} }  && \cHH_*(\cD/S)  \\ 
        \cHom_S(E, E) \ar[rr]^{\sigma_{E}} \ar[u] && \cHH_*(\cC/S) \ar[u]_{\Psi_*}
        }
    \end{equation*}
    where the left vertical map is given by functoriality of $\Psi$ and $\Psi_*$ is the induced map on Hochschild homology (see \cite[\S3.1]{IHC-CY2}). 
\end{remark}

\subsection{Semiregularity of objects} 
We will mostly be interested in the semiregularity morphism when $S = \Spec(k)$ is the spectrum of a field and $\Phi = \id_{\cC}$ is the identity. In this case, $\cExt_k^i(E,E)$ and $\cHH_{-i}(\cC/k)$ correspond to the usual $k$-vector spaces $\Ext_k^i(E,E)$ and $\HH_{-i}(\cC)$. 

\begin{definition}
\label{definition-semiregular}
Let $\cC$ be a $k$-linear category for a field $k$. 
An object $E \in \cC$ is \emph{semiregular} if the map 
\begin{equation*} 
\sigma_{E}^2 \colon \Ext_k^2(E,E) \to \HH_{-2}(\cC)
\end{equation*} 
is injective. 
\end{definition} 

\begin{remark}
When $\cC = \Dperf(X)$ for a smooth proper complex variety $X$, then $\sigma_E^2$ can be identified (after applying an HKR isomorphism) with the semiregularity map of Buchweitz and Flenner \cite{BF}. 
\end{remark} 

Next we discuss the equivariant case. 
We follow \cite[\S3]{bayer-perry} for our conventions on group actions on linear categories and their invariants. 

\begin{definition}
Let $\cC$ be a $k$-linear category, where $k$ is a field. Assume $\cC$ is equipped with an action by a finite group $G$.
\begin{itemize}
\item An object $E \in \cC$ is \emph{$G$-semiregular} if:
\begin{enumerate}
    \item $E$ admits a $G$-equivariant structure, i.e. there exists an object $\wtilde{E} \in \cC^G$ whose image under the forgetful functor $\Forg \colon \cC^G \to \cC$ is $E$. 
    \item The restriction of the map $\sigma_{E}^2 \colon \Ext_k^2(E,E) \to \HH_{-2}(\cC)$ to the $G$-invariant subspace $\Ext^2_k(E,E)^G$  is injective. 
\end{enumerate}
\item An object $E \in \cC$ is \emph{weakly $G$-semiregular} if there exists a semiregular object $\wtilde{E} \in \cC^G$ such that $\Forg(\wtilde{E}) \simeq E$. 
\end{itemize}
\end{definition}

\begin{lemma}
\label{lemma-G-semiregularity}
Let $\cC$ be a $k$-linear category, where $k$ is a field. Assume $\cC$ is equipped with an action by a finite group $G$ whose order is invertible in $k$. 
Let $E \in \cC$ be a $G$-semiregular object. 
Then $E$ is weakly $G$-semiregular. 
More precisely, any object $\wtilde{E} \in \cC^G$ with $\Forg(\wtilde{E}) \simeq E$ is semiregular. 
\end{lemma}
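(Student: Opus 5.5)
We reduce the semiregularity of $\wtilde{E}$ to the assumed injectivity on $\Ext^2_k(E,E)^G$ by comparing the two semiregularity maps through the forgetful functor. Fix $\wtilde{E} \in \cC^G$ with $\Forg(\wtilde{E}) \simeq E$. We will establish two facts.

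First, the map $\Ext^2_k(\wtilde{E},\wtilde{E}) \to \Ext^2_k(E,E)$ induced by $\Forg$ is injective, with image exactly the invariant subspace $\Ext^2_k(E,E)^G$. Here $G$ acts on $\Ext^*_k(E,E)$ by conjugating with the equivariant structure isomorphisms $\phi_g \colon E \xrightarrow{\sim} g^*E$ of $\wtilde{E}$. For this we use the standard description of mapping complexes in the invariant category (following the conventions of \cite[\S3]{bayer-perry}): $\cC^G$ is the homotopy limit over $BG$, so
\begin{equation*}
\Hom_{\cC^G}(\wtilde{E},\wtilde{E}) \simeq \Hom_{\cC}(E,E)^{hG},
\end{equation*}
the homotopy fixed points for the induced $G$-action on the mapping complex. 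Since $|G|$ is invertible in $k$, taking homotopy fixed points on $k$-module complexes is exact and agrees with ordinary invariants on cohomology; the averaging projector $\frac{1}{|G|}\sum_g g$ splits it off. Hence $\Ext^i(\wtilde{E},\wtilde{E}) \cong \Ext^i(E,E)^G$, and the map induced by $\Forg$ is the inclusion of invariants.

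Second, we apply the functoriality of Remark~\ref{remark-semiregularity-map-functorial} to $\Psi = \Forg \colon \cC^G \to \cC$ (over $S = \Spec k$). In degree $2$ this gives a commutative square
\begin{equation*}
\xymatrix{
\Ext^2_k(E,E) \ar[r]^{\sigma^2_E} & \HH_{-2}(\cC) \\
\Ext^2_k(\wtilde{E},\wtilde{E}) \ar[r]^{\sigma^2_{\wtilde{E}}} \ar[u] & \HH_{-2}(\cC^G) \ar[u]_{\Forg_*}
}
\end{equation*}
By the first fact, the composition along the top is $\sigma^2_E$ restricted to $\Ext^2_k(E,E)^G$, precomposed with an isomorphism onto that subspace. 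By $G$-semiregularity this composite is injective. It follows that $\sigma^2_{\wtilde{E}}$ is injective, since if $\Forg_* \circ \sigma^2_{\wtilde{E}}$ is injective then so is $\sigma^2_{\wtilde{E}}$. Thus $\wtilde{E}$ is semiregular, and in particular $E$ is weakly $G$-semiregular (existence of some $\wtilde{E}$ being part of the definition of $G$-semiregularity).

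The main obstacle is the first fact. One must check that, in the enhanced ($\infty$-categorical) setting, the mapping complex in $\cC^G$ is the homotopy fixed points of $\Hom_{\cC}(E,E)$ under the action defined via $\wtilde{E}$. One must also check that this $G$-action on $\Ext^2$ is the same one with respect to which ``$G$-invariant subspace'' is meant in the definition, and that the group cohomology $\rH^{>0}(G, -)$ vanishes because $|G| \in k^\times$. We expect this to follow formally from the limit description of $\cC^G$. A minor point is that a different choice of equivariant structure changes the action only by an inner twist, so the invariant subspace is intrinsic.
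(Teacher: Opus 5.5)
Your main argument is the paper's proof. It uses the same commutative square from functoriality of the semiregularity map along $\Forg \colon \cC^G \to \cC$, and the same identification $\Ext^2_k(\wtilde{E},\wtilde{E}) \cong \Ext^2_k(E,E)^G$ using $|G| \in k^\times$. It also uses the same observation that injectivity of $\Forg_* \circ \sigma^2_{\wtilde{E}}$ forces injectivity of $\sigma^2_{\wtilde{E}}$.

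Your closing remark that the invariant subspace is intrinsic is false. Replacing the structure isomorphisms $\phi_g$ by $\phi_g \circ a_g$ with $a_g \in \Aut(E)$ changes the action on $\Ext^*_k(E,E)$ to $\xi \mapsto a_g^{-1}(g\cdot\xi)a_g$. This can change the invariants unless the $a_g$ act trivially by conjugation, e.g.\ when $\Aut(E) = k^\times$.

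Here is a counterexample. Let $G = S_3$ act trivially on $\cC = \Dperf(X)$ for a complex K3 surface $X$. Let $L$ be a line bundle and $E = L^{\oplus 2}$. Take $\wtilde{E}_1 = L \otimes \rho$ with $\rho$ the $2$-dimensional irreducible representation, and $\wtilde{E}_2 = L \otimes \bC^2$ with trivial $G$-action. On $\Ext^2(E,E) = \rH^2(\cO_X) \otimes M_2(\bC)$ the map $\sigma^2_E$ is $\xi \otimes A \mapsto \mathrm{tr}(A)\,\sigma^2_L(\xi)$, with $\sigma^2_L$ injective.
\begin{itemize}
\item The invariants for $\wtilde{E}_1$ are $\rH^2(\cO_X) \otimes \bC\cdot\id$ by Schur, and $\sigma^2_E$ is injective there.
\item The invariants for $\wtilde{E}_2$ are all of $\Ext^2(E,E)$, and $\wtilde{E}_2$ is not semiregular. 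Under $\cC^G \simeq \prod_V \Dperf(X)$, indexed by the irreducible representations $V$, it is $L^{\oplus 2}$ in the trivial factor, where $\xi \otimes \mathrm{diag}(1,-1)$ lies in the kernel.
\end{itemize}

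So what your argument actually proves is this: $\wtilde{E}$ is semiregular whenever $\sigma^2_E$ is injective on the invariants for the action induced by that same $\wtilde{E}$. The paper's proof makes the same implicit choice. This suffices for weak $G$-semiregularity. However, the ``any $\wtilde{E}$'' clause holds only when the invariant subspace is independent of the equivariant structure, e.g.\ for simple $E$. You should not assert independence in general.
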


\begin{proof}
    In view of Remark~\ref{remark-semiregularity-map-functorial}, we have a commutative diagram 
    \begin{equation*}
        \xymatrix{
        \Ext^2_k(E,E) \ar[rr]^{ \sigma^2_{E} }  && \HH_{-2}(\cC)  \\ 
        \Ext_k^2(\wtilde{E}, \wtilde{E}) \ar[rr]^{\sigma^2_{\wtilde{E}}} \ar[u] && \HH_{-2}(\cC^G) .  \ar[u]  
        }
    \end{equation*}
    In $\cC^G$, the mapping object is computed by 
    \begin{equation*}
        \cHom_k(\wtilde{E}, \wtilde{E}) \simeq \cHom_k(E,E)^G.
    \end{equation*}
    By our assumption on the order of $G$, it follows that 
    \begin{equation*}    
    \Ext_k^2(\wtilde{E}, \wtilde{E}) \cong \Ext^2_k(E,E)^G.
    \end{equation*} 
Under this isomorphism, the composition of the left vertical arrow with $\sigma^2_E$ in the above diagram is identified with the restriction of $\sigma^2_E$ to $\Ext^2_k(E,E)^G$. The claim follows. 
\end{proof}


\section{Hodge theory of categories} 
\label{section-hodge-theory}

Let $S$ be a complex variety. 
To formulate our semiregularity theorem, 
we will need to consider an $S$-linear category $\cC$ which supports a reasonable version of Hodge theory. 
Such a theory was developed in \cite{IHC-CY2} when $\cC$ is a \emph{smooth proper $S$-linear category of geometric origin},
meaning that $\cC$ admits an embedding as an $S$-linear semiorthogonal component into $\Dperf(X)$ for a smooth proper $S$-scheme $X$.\footnote{Conjecturally, the theory also extends to the case of abstract smooth proper $S$-linear categories.} 

Briefly, for a smooth proper $S$-linear category $\cC$ of geometric origin, there is a local system $\Ktop[0](\cC/S)$ underlying a weight $0$ variation of Hodge structures on $S$, which is compatible with restriction to fibers and the classical Hodge theory of varieties. 
In this section, we review some relevant results and examples of this theory. 

\subsection{General formalism}
As above, let $\cC$ be a smooth proper $S$-linear category of geometric origin, where $S$ is a complex variety. 

\subsubsection*{Absolute case} 
When $S = \Spec(\bC)$ is a point, we write $\Ktop[0](\cC)$ for $\Ktop[0](\cC/\Spec(\bC))$.
There is a canonical map $\rK_0(\cC) \to \Ktop[0](\cC)$ from the Grothendieck group of $\cC$ to the topological K-theory, and classes in the image are called \emph{algebraic}.
This map factors through the subgroup $\Hdg(\cC, \bZ) \subset \Ktop[0](\cC)$ of integral Hodge classes \cite[Lemma 5.10]{IHC-CY2}. 
In these terms, the \emph{integral Hodge conjecture} for $\cC$ states that $\rK_0(\cC) \to \Hdg(\cC, \bZ)$ is surjective, while the \emph{Hodge conjecture} states that this is so after tensoring with $\bQ$. 

Moreover, the construction $\Ktop[0](\cC)$ is additive under semiorthogonal decompositions, so that if $\cC \subset \Dperf(X)$ is a semiorthogonal component of a smooth proper complex variety $X$, then $\Ktop[0](\cC)$ is naturally a summand of $\Ktop[0](\Dperf(X))$. 
In this sense, the theory essentially reduces to the geometric case discussed next.  

\subsubsection*{Absolute geometric case}
When $S = \Spec(\bC)$ is a point and $\cC = \Dperf(X)$ for a smooth proper variety $X$, then $\Ktop[0](\cC)$ identifies with the complex topological $\rK$-theory $\Ktop[0](X)$ of (the analytification of) $X$. 
Moreover, the Chern character gives an isomorphism 
\begin{equation*}
\Ktop[0](\Dperf(X)) \otimes \bQ \cong \rH^{\ev}(X, \bQ) \coloneqq \bigoplus_{k \geq 0} \rH^{2k}(X, \bQ)(k) 
\end{equation*} 
of weight $0$ rational Hodge structures \cite[Proposition~5.4]{IHC-CY2}. 
Similarly, after tensoring with $\bQ$, the map $\rK_0(\Dperf(X)) \to \Hdg(\Dperf(X), \bZ)$ identifies (via the Chern character) with the cycle class map $\CH^*(X) \otimes \bQ \to \Hdg^*(X, \bQ)$ to the group of rational Hodge classes (summed up over all degrees). In this way, the usual Hodge conjecture for $X$ in all degrees is equivalent to the Hodge conjecture for $\Dperf(X)$ \cite[Lemma 5.12]{IHC-CY2}. 

\subsubsection*{Relative case} 
For general $S$, the fiber of the variation of Hodge structures $\Ktop[0](\cC/S)$ over $s \in S(\bC)$ is the Hodge structure $\Ktop[0](\cC_s)$ on the $\bC$-linear fiber category $\cC_s$. 
Moreover, the theory is additive under semiorthogonal decompositions, so that if $\cC \subset \Dperf(X)$ is a semiorthogonal component of a smooth proper $S$-scheme $X$, then $\Ktop(\cC/S)$ is a summand of $\Ktop[0](\Dperf(X)/S)$. 
In this sense, the theory essentially reduces to the relative geometric case discussed next. 

\subsubsection*{Relative geometric case} 
When $S$ is general and $\cC = \Dperf(X)$ for a smooth proper morphism $f \colon X \to S$, 
then $\Ktop[0](\cC/S)$ identifies with the local system $\Ktop[0](X/S)$ of topological $K$-theories of the fibers of $X \to S$. 
Moreover, generalizing the absolute case above, there is an isomorphism 
\begin{equation*}
    \Ktop[0](\Dperf(X)/S) \otimes \bQ \cong \rR^{\ev}f_*\bQ \coloneqq 
    \bigoplus_{k \geq 0} \rR^{2k}f_* \bQ(k)
\end{equation*}
of weight $0$ variations of rational Hodge structures on $S$ \cite[Proposition 5.7]{IHC-CY2}.

\subsection{Twisted derived categories}
Given a Brauer class $\alpha \in \Br(X) \coloneqq \rH^2_{\et}(X, \bG_m)_{\tors}$ on a scheme $X$, we may consider the category $\Dperf(X, \alpha)$ of $\alpha$-twisted perfect complexes. 
There are various models for this category, each depending on an auxiliary choice, but all of these models result in equivalent theories which are independent of the choices involved (see~\cite{lieblich-moduli-twisted}). 
For us, it is convenient to choose a $\bmu_n$-gerbe $\cX \to X$ whose class maps to $\alpha$ under $\rH^2_{\et}(X, \bmu_n) \to \rH^2_{\et}(X, \bG_m)$, and then define $\Dperf(X, \alpha)$ as the category of $1$-twisted perfect complexes on $\cX$. 

Now assume that $f \colon X \to S$ is a smooth proper family of complex varieties, 
and that $\alpha$ can be represented by an Azumaya algebra\footnote{By \cite{dJ-gabber}, when $X$ admits an ample line bundle then every Brauer class admits such a representation.}. Then $\Dperf(X, \alpha)$ 
is a smooth proper $S$-linear category of geometric origin by \cite{bernardara-BS, bergh-BS} (see~\cite[Example 4.11]{PI-abelian3folds}), so we may consider its Hodge theory in the sense discussed above. 
Below we explain some of Hotchkiss's work \cite{hotchkiss-pi} on the resulting Hodge structure. 

First we consider the absolute case $S = \Spec(\bC)$. 
We also assume that $\alpha$ is \emph{topologically trivial}, or equivalently that there exists a class $B \in \rH^2(X, \bQ(1))$ mapping to $\alpha$ under 
\begin{equation*}
    \exp \colon \rH^2(X, \bQ(1)) \to \rH^2(X^{\an}, \bG_m)_{\tors} = \rH^2_{\et}(X, \bG_m)_{\tors}. 
\end{equation*}
The \emph{$B$-twisted Mukai Hodge structure} $\Ktop[0](X)^{B}$ is 
the unique weight $0$ integral Hodge structure on $\Ktop[0](X)$ such that the homomorphism 
\begin{equation*}
    \Ktop[0](X)^{B} \otimes \bQ \to  \rH^{\ev}(X,\bQ), \quad v \mapsto \exp(B) \cdot \ch(v)
\end{equation*}
is an isomorphism of $\bQ$-Hodge structures, where here $\exp(B) = 1 + B + \frac{B^2}{2!} + \frac{B^3}{3!} + \cdots$. 
Then by \cite[Theorem 4.10]{hotchkiss-pi} there is an isomorphism 
\begin{equation}
\label{varphi-KtopB}
    \varphi \colon \Ktop[0](\Dperf(X, \alpha)) \xrightarrow{\sim} \Ktop[0](X)^B
\end{equation}
of weight $0$ integral Hodge structures; in particular, there is an isomorphism 
\begin{equation*}
\Ktop[0](\Dperf(X, \alpha)) \otimes \bQ \xrightarrow{\sim} \rH^{\ev}(X,\bQ)
\end{equation*}
of $\bQ$-Hodge structures.

\begin{remark}
\label{remark-trivial-brauer-class}
    Suppose that the class $B \in \rH^2(X, \bQ(1))$ is algebraic, so that the Brauer class $\alpha = 0$ vanishes. Then there is an equivalence 
    $\Dperf(X, \alpha) \simeq \Dperf(X)$ under which the isomorphism~\eqref{varphi-KtopB} is identified with the canonical isomorphism $\Ktop(\Dperf(X)) \cong \Ktop[0](X)$ (see \cite[Remark~18.6]{PI-abelian3folds}). 
\end{remark}

Now we return to the relative situation where $f \colon X \to S$ is a smooth proper family of complex varieties. 
We assume that $\alpha$ is relatively topologically trivial, in the sense that its restriction $\alpha_s \in \Br(X_s)$ is topologically trivial for each $s \in S(\bC)$. 
Recall that in order to define $\Dperf(X,\alpha)$, we also choose a $\bmu_n$-gerbe $\cX \to X$ whose class $\theta \in \rH^2_{\et}(X, \bmu_n)$ maps to $\alpha \in \rH^2_{\et}(X, \bG_m)^{\tors}$; this choice is important for the formulation of the result below. 
In terms of $\theta$, the condition that $\alpha_s$ is topologically trivial amounts to the condition that the restriction $\theta_s \in \rH^2_{\et}(X_s, \bmu_n)$ lies in the image of the map 
\begin{equation}
\label{bs-field}
    \exp(-/n) \colon \rH^2(X_s, \bZ(1)) \to \rH^2(X_s^{\an}, \bmu_n) = \rH^2_{\et}(X_s, \bmu_n)
\end{equation}

\begin{theorem}[{\cite[\S4.4]{hotchkiss-pi}}]
\label{theorem-mukai-variation}
In the above situation, there is an isomorphism 
\begin{equation*}
    \gamma \colon \Ktop[0](\Dperf(X,\alpha)/S) \otimes \bQ \xrightarrow{\sim} \rR^{\ev} f_*\bQ
\end{equation*}
of $\bQ$-variations of Hodge structure\footnote{As in the absolute case, there is an integral statement, but its formulation is slightly more elaborate and we will not need it.} with the following properties: 
\begin{enumerate}
    \item The formation of $\gamma$ is compatible with base change on $S$. 
    \item \label{commutative-diagram-gammas} 
    Let $s \in S(\bC)$, 
    let $b_s \in \rH^2(X_s, \bZ(1))$ be an element with image $\theta_s$ under the map~\eqref{bs-field}, 
    and let $B_s = b_s/n \in \rH^2(X_s, \bQ(1))$. 
    Then there is a commutative diagram 
        \begin{equation}
             \begin{tikzcd}
                     \Ktop[0](\Dperf(X_s, \alpha_s)) \otimes \bQ \ar{d}[swap] {\varphi_s} \ar[r, "\gamma_s"] & \rH^{\ev}(X_s, \bQ) \\
                     \Ktop[0](X_s)^{B_s} \otimes \bQ \ar[ur, "\exp(B_s) \cdot \ch(-)"']
             \end{tikzcd}
        \end{equation}
        where $\gamma_s$ is the fiber of $\gamma$ over $s$ and $\varphi_s$ is the ($\bQ$-extension of the) isomorphism~\eqref{varphi-KtopB} for $(X_s, \alpha_s)$.  
    \item \label{mukai-variation-section} 
    In the situation of~\eqref{commutative-diagram-gammas}, 
    a class $v_s \in \Ktop(\Dperf(X_s, \alpha_s)) \otimes \bQ$ lifts to a global section of the variation $\Ktop[0](\Dperf(X,\alpha)/S) \otimes \bQ$ if and only if $\exp(B_s) \cdot \ch(\varphi_s(v_s))$ lifts to a global section of $\rR^{\ev}f_*\bQ$. 
\end{enumerate}
\end{theorem}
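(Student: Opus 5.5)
The plan is to construct $\gamma$ by globalizing the fiberwise twisted Mukai isomorphisms~\eqref{varphi-KtopB}, using the gerbe $\cX \to X$ to pin down the choices. The key observation is that $\Dperf(X,\alpha)$ is the weight-one summand of $\Dperf(\cX)$ under the $\bmu_n$-action, so $\Ktop[0](\Dperf(X,\alpha)/S)$ is a summand of $\Ktop[0](\Dperf(\cX)/S)$. After tensoring with $\bQ$, this summand should be computed by a twisted Chern character on the gerbe. Concretely, I would use the relative topological $K$-theory of $\cX^{\an}$ over $S^{\an}$ and identify the weight-one part, fiberwise, with twisted $K$-theory $K^0(X_s^{\an}, \theta_s)$.

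The first step works locally on $S^{\an}$. Over a contractible open $V \subset S^{\an}$, the family is topologically trivial, so $X_V \simeq X_s \times V$. The topologically trivial class $\theta_s$ then extends to a flat section of $\rR^2 f_* \bmu_n$ over $V$. The lifts $b_s \in \rH^2(X_s,\bZ(1))$ of $\theta_s$ also extend as flat sections $b$ of $\rR^2f_*\bZ(1)$, so $B = b/n$ is a flat section of $\rR^2f_*\bQ(1)$.

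Over $V$, I would define $\gamma$ by $v \mapsto \exp(B)\cdot\ch(\varphi(v))$, where $\varphi$ is the topological version of~\eqref{varphi-KtopB}. Hotchkiss's construction of $\varphi$ is topological, built from a trivialization of the $\bmu_n$-gerbe twisted by the line bundle of class $b$, so it varies continuously in families and defines a map of local systems over $V$. Independence from the choice of $b_s$ is the key cocycle check. Replacing $b$ by $b + n c$ with $c \in \rH^2(X_s,\bZ(1))$ changes $\varphi$ by tensoring with the topological line bundle $L_c$, which multiplies $\ch$ by $\exp(-c)$. It also replaces $\exp(B)$ by $\exp(B)\exp(c)$, so the product is unchanged. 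Hence the local definitions glue to a global isomorphism of $\bQ$-local systems. Property~\eqref{commutative-diagram-gammas} holds by construction, and compatibility with base change follows because everything is defined fiberwise and topologically.

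The remaining steps are as follows.

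\emph{Hodge compatibility.} The map $\gamma$ is flat, and on each fiber it agrees with an isomorphism of Hodge structures by~\eqref{varphi-KtopB} and the definition of the $B$-twisted Mukai structure. Compatibility with the Hodge filtrations can be checked fiberwise, so $\gamma$ is an isomorphism of $\bQ$-VHS.

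\emph{Property~\eqref{mukai-variation-section}.} This is immediate: a class lifts to a global section of one local system if and only if its image does under an isomorphism of local systems, and the fiber of $\gamma$ over $s$ is $\exp(B_s)\cdot\ch(\varphi_s(-))$.

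The main obstacle is the first step. One must show that Hotchkiss's absolute isomorphism $\varphi$, which is defined through the noncommutative topological $K$-theory of Blanc applied to $\Dperf(X_s,\alpha_s)$, is compatible with the relative construction of $\Ktop[0](\cC/S)$ from \cite{IHC-CY2}. Equivalently, $\varphi$ must arise as the fiber of a map of local systems, not just be defined pointwise. My first attempt would be to realize both sides through the $\bmu_n$-weight decomposition of $\Ktop[0](\Dperf(\cX)/S)$. This reduces the question to the untwisted relative geometric case \cite[Proposition 5.7]{IHC-CY2} applied to the smooth proper stack $\cX$, or to a Brauer–Severi model of it.
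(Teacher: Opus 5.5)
The paper gives no proof of this statement; it quotes it from Hotchkiss \cite[\S4.4]{hotchkiss-pi}. So your argument has to stand on its own.

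\textbf{What works.} The formal parts are fine.
\begin{enumerate}
\item A flat extension $b$ of $b_s$ over a small contractible $V$ automatically lifts $\theta_t$ for $t \in V$, since $\exp(b/n)$ and $\theta$ are flat sections of $\rR^2 f_*\bmu_n$ that agree at $s$.
\item The lift $b_s$ is unique modulo $n\rH^2(X_s,\bZ(1))$, so your check under $b \mapsto b + nc$ covers all choices. One small correction: the change is tensoring by $L_c^{-1}$, not $L_c$, if you want the factor $\exp(-c)$. The remaining ambiguity in trivializing the gerbe (by $\bmu_n$-torsors) acts by torsion line bundles and is invisible rationally.
\item Once $\gamma$ is a morphism of local systems, Hodge compatibility can be checked fiberwise and base change is formal. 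Part (3) then follows from (2).
\end{enumerate}

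\textbf{The gap.} It is the step you flag yourself and leave open: that the $\varphi_t$, $t \in V$, are the fibers of a single morphism of local systems $\Ktop[0](\Dperf(X,\alpha)/S)|_V \to \Ktop[0](X/S)|_V$.

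Saying that Hotchkiss's construction is topological and therefore varies continuously does not establish this. The source is not defined as the local system of twisted $K$-groups $K^0(X_t^{\an},\theta_t)$. It is defined through the Blanc-type topological $K$-theory of the $S$-linear category, concretely as a summand of $\Ktop[0](P/S)$ for a Brauer--Severi model $p \colon P \to X$. Identifying it with the topological $K$-theory of $\cX^{\an}$, compatibly with base change and Hodge structures, is the actual content of the theorem. Everything else in your proposal is bookkeeping around it.

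\textbf{Your suggested fix does not close it.}
\begin{itemize}
\item \cite[Proposition~5.7]{IHC-CY2} is about smooth proper schemes, not the stack $\cX$.
\item With a Brauer--Severi model you would still have to isolate the summand of $\rR^{\ev}(f\circ p)_*\bQ$ cut out by $\Dperf(X,\alpha)$.
\item You would then need to build a flat isomorphism from that summand onto $\rR^{\ev}f_*\bQ$.
\item Finally you would have to show its fiber is $\exp(B_s)\cdot\ch(\varphi_s(-))$. This requires Hotchkiss's actual definition of $\varphi_s$.
\end{itemize}

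\textbf{A cleaner organization.} Your first paragraph already points to it, and it removes the gluing entirely. Take $\gamma$ to be the Chern character of weight-one classes on the gerbe, with values in $\rH^{\ev}(\cX_t,\bQ)=\rH^{\ev}(X_t,\bQ)$; this is global and flat by construction. Suppose $\varphi_s$ is untwisting by a tautological weight-one line bundle $M$ with $M^{\otimes n}\cong L_{b_s}$ on a root-gerbe model. Then $\ch(E)=\exp(b_s/n)\cdot\ch(E\otimes M^{-1})$ gives (2) directly. This route still needs the same comparison of categorical and topological $K$-theory of $\cX$ as variations of Hodge structure. That comparison is exactly the input from \cite{hotchkiss-pi} which your proposal assumes rather than proves.
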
 

\subsection{Invariant categories} 
\label{hodge-theory-invariant-categories}
Let $\cC$ is a smooth proper category of geometric origin over a complex variety $S$, and assume that $\cC$ is equipped with an action by a finite group $G$. 
To our knowledge, there is no known simple formula for $\Ktop[0](\cC^G/S)$ in terms of $\Ktop[0](\cC/S)$. 
On the other hand, the situation is simpler if we focus on the invariant parts and work rationally. 
Here, we use that there is a natural \emph{residual action} of the character group $G^{\vee} \coloneqq \Hom(G, \bC^{\times})$ on $\cC^G$ by ``rescaling $G$-linearizations'' (see for instance \cite{elagin} or the forthcoming \cite{NC-abelian-surfaces}). 
Hence $G^{\vee}$ also naturally acts on $\Ktop[0](\cC^G/S)$. 

\begin{lemma}
\label{lemma-Ktop-G}
    Let $\cC$ be a smooth proper $S$-linear category of geometric origin, where $S$ is a complex variety. 
    Let $G$ be a finite abelian group which acts on $\cC$ as an $S$-linear category in such a way that $\cC^G$ is smooth and proper of geometric origin over $S$.  
    Then the forgetful functor $\Forg \colon \cC^G \to \cC$ induces an isomorphism  
    \begin{equation*}
        \Ktop[0](\cC^G/S)^{G^\vee} \otimes \bQ \cong \Ktop[0](\cC/S)^G \otimes \bQ
    \end{equation*}
    of $\bQ$-variations of Hodge structures.
\end{lemma}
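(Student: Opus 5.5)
Use the forgetful functor $\Forg \colon \cC^G \to \cC$ together with its adjoint $\Inf \colon \cC \to \cC^G$ (inflation, $E \mapsto \bigoplus_{g \in G} g^*E$ with its natural linearization). Both are $S$-linear functors between smooth proper categories of geometric origin. By the functoriality of $\Ktop[0](-/S)$ from \cite{IHC-CY2}, they induce morphisms of variations of Hodge structure
\begin{equation*}
\Forg_* \colon \Ktop[0](\cC^G/S) \to \Ktop[0](\cC/S), \qquad \Inf_* \colon \Ktop[0](\cC/S) \to \Ktop[0](\cC^G/S).
\end{equation*}
Since these are morphisms of $\bQ$-variations, it suffices to show that after tensoring with $\bQ$ they restrict to mutually inverse isomorphisms (up to the invertible scalar $|G|$) between the $G^\vee$-invariants and the $G$-invariants. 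This can be checked on underlying local systems, and hence fiberwise, by base-change compatibility. So we may assume $S = \Spec(\bC)$.

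The key steps are the two composite formulas at the level of functors.
\begin{itemize}
\item First, $\Forg \circ \Inf \simeq \bigoplus_{g \in G} g^*$. Hence $\Forg_* \Inf_* = \sum_{g} g_*$ on $\Ktop[0](\cC)$. This equals $|G|$ on $G$-invariant classes, and it lands in the $G$-invariants.
\item Second, $\Inf \circ \Forg \simeq \bigoplus_{\chi \in G^\vee} (-) \otimes \chi$. This is the standard identification for a finite abelian group (cf.\ \cite{elagin}), using that the regular representation decomposes as a sum of characters and that the residual $G^\vee$-action is twisting the linearization by $\chi$. Hence $\Inf_*\Forg_* = \sum_{\chi} \chi_*$, which is $|G^\vee| = |G|$ on $G^\vee$-invariants and lands in them.
\end{itemize}

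We also need two compatibility facts. The image of $\Forg_*$ lies in the $G$-invariants, since $g^* \circ \Forg \simeq \Forg$. The map $\Forg_*$ is $G^\vee$-invariant, since $\Forg \circ \chi \simeq \Forg$; so restricting to the $G^\vee$-invariants loses nothing. Likewise, $\Inf \circ g^* \simeq \Inf$ and $\chi \circ \Inf \simeq \Inf$.

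From these relations, $\Forg_*$ restricted to $\Ktop[0](\cC^G)^{G^\vee}\otimes\bQ$ and $|G|^{-1}\Inf_*$ restricted to $\Ktop[0](\cC)^G\otimes\bQ$ are inverse to each other. Compatibility with Hodge structures is automatic, since both maps come from functors.

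The main obstacle is making sure the functor-level isomorphisms, especially $\Inf\circ\Forg \simeq \bigoplus_\chi \chi$, are genuinely natural isomorphisms of $S$-linear functors, and that $\Ktop[0]$ is additive on direct sums of functors. Without this we cannot pass from functors to maps on $\Ktop[0]$.

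- For naturality, I would verify the identification on the level of $\infty$-categorical invariants, writing $\cC^G$ as modules over the monad $\Forg\circ\Inf$.
- For additivity, I would use that $\Ktop[0]$ is functorial for Fourier–Mukai-type kernels. The kernel of a direct sum of functors is the direct sum of the kernels, and $\Ktop[0]$ is additive in kernels.
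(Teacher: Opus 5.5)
Your proposal is correct and is essentially the paper's argument. The paper likewise uses $\Forg$ and $\Inf$ to get maps between $\Ktop[0](\cC^G/S)^{G^\vee}\otimes\bQ$ and $\Ktop[0](\cC/S)^G\otimes\bQ$ whose composites are multiplication by $|G|$, and it leaves the verification as ``one checks,'' deferring details to forthcoming work. Your identities $\Forg\circ\Inf\simeq\bigoplus_{g\in G} g$ and $\Inf\circ\Forg\simeq\bigoplus_{\chi\in G^\vee}\chi$ supply exactly that check; the reduction to fibers is unnecessary, since these identities already hold over $S$, but it does no harm.
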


\begin{remark}
    The condition that $\cC^G$ is of geometric origin is subtle even when $\cC = \Dperf(X)$ is the derived category of a smooth proper $S$-scheme, but in \S\ref{section-geometrization} we prove that it holds for many group actions in this situation.  
\end{remark}

\begin{proof}
    One checks that the forgetful and inflation functors $\Forg \colon \cC^G \to \cC$ and $\Inf \colon \cC \to \cC^G$ induce maps
    \begin{equation*}
        \Ktop[0](\cC^G/S)^{G^{\vee}} \to \Ktop[0](\cC/S)^G \quad \text{and} \quad 
        \Ktop[0](\cC/S)^G \to \Ktop[0](\cC^G/S)^{G^{\vee}}
    \end{equation*}
    whose compositions in either direction are multiplication by $|G|$. 
    We refer to the forthcoming work \cite{NC-abelian-surfaces} for such arguments in a more general setting. 
\end{proof}

Let $\cC$ be a smooth proper $S$-linear category, which is connected over $S$ in the sense of \cite[Definition~5.18]{PI-abelian3folds}; 
for instance, this holds when $\cC = \Dperf(X)$ where $f \colon X \to S$ is a smooth proper morphism of schemes with geometrically integral fibers. 
Then by \cite[Proposition~8.2]{PI-abelian3folds}, there is a group algebraic stack $\cAut_{\cC/S}$ locally of finite type over $S$, which is a $\bG_m$-gerbe over a group algebraic space $\Aut_{\cC/S}$; when $S = \Spec(k)$ is the spectrum of a field, we often omit it from the notation.  
Below, we consider the situation where there exists a point $0 \in S(\bC)$ such that $G$ acts on the fiber $\cC_0$ through the identity component $\Aut^0_{\cC_0}$ of $\Aut_{\cC_0}$, i.e. such that the associated homomorphism 
$G \to \Aut_{\cC_0}(\bC)$ factors through $\Aut^0_{\cC_0}(\bC)$. 

\begin{lemma}
\label{lemma-G-action-Ktop-trivial}
    Let $\cC$ be a smooth proper $S$-linear category of geometric origin, where $S$ is a complex variety. 
    Let $G$ be a finite group which acts on $\cC$ as an $S$-linear category. 
    Assume that there exists a point $0 \in S(\bC)$ such that $\cC_0$ is a connected $\kappa(0)$-linear category and the induced action of $G$ on $\cC_0$ is through the identity component $\Aut^0_{\cC_0}$. 
    Then $G$ acts trivially on the local system $\Ktop[0](\cC/S)$, so that  
    \begin{equation*}
        \Ktop[0](\cC/S)^G = \Ktop[0](\cC/S). 
    \end{equation*}
\end{lemma}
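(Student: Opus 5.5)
Since $\Ktop[0](\cC/S)$ is a local system, the plan is to reduce to a single fiber and then use a deformation argument inside the connected group $\Aut^0_{\cC_0}$.

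\emph{Reduction to the fiber over $0$.} The group $G$ acts on $\Ktop[0](\cC/S)$ by automorphisms of local systems, since $\Ktop[0]$ is functorial for $S$-linear autoequivalences. An endomorphism $g - \id$ of a local system on a connected base vanishes if and only if it vanishes on one fiber. Indeed, its kernel is a sub-local system, hence locally constant in rank. If $S$ is not connected, we only work over the connected component containing $0$, which is the setting in which the statement is meant. Compatibility of $\Ktop[0]$ with base change identifies the fiber at $0$, with its $G$-action, with $\Ktop[0](\cC_0)$ carrying the action induced by $G \to \Aut_{\cC_0}(\bC)$. So it suffices to show that every $\Phi \in \Aut^0_{\cC_0}(\bC)$ acts trivially on $\Ktop[0](\cC_0)$.

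\emph{Triviality on the identity component.} The identity component $\Aut^0_{\cC_0}$ is a connected group algebraic space locally of finite type over $\bC$. Any $\bC$-point $\Phi$ can therefore be joined to the identity by a chain of connected curves $T \subset \Aut^0_{\cC_0}$. After passing to smooth covers and lifting through the $\bG_m$-gerbe $\cAut$ (possibly after an étale cover of $T$), each curve carries a $T$-linear autoequivalence $\Phi_T$ of $\cC_0 \otimes \Dperf(T)$ whose fibers are the given points.

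By functoriality of the relative theory, $\Phi_T$ induces an automorphism of the constant local system $\Ktop[0](\cC_0 \times T/T) = \underline{\Ktop[0](\cC_0)}_T$. Its fiber over $t$ is $(\Phi_t)_*$. An automorphism of a constant local system over a connected base has locally constant, hence constant, fibers. So $(\Phi_t)_*$ is independent of $t$, and chaining these comparisons from $\id$ to $\Phi$ gives $\Phi_* = \id$.

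\emph{Main obstacle.} The delicate step is producing the family $\Phi_T$. The stack $\cAut$ is a $\bG_m$-gerbe, so a point of $\Aut$ does not canonically give a functor. The obstruction to lifting is a Brauer class on $T$, which dies on an étale cover of $T$, and étale covers preserve connectedness of suitable components. Alternatively, I would use the universal family over a smooth atlas of $\cAut^0$ directly, noting that scalar twists by $\bG_m$ act trivially on $\Ktop[0]$. Functoriality of $\Ktop[0](-/T)$ for $T$-linear functors between categories of geometric origin is taken from \cite{IHC-CY2}.
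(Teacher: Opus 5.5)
Your proposal is correct and follows the same route as the paper: use that $\Ktop[0](\cC/S)$ is a local system to reduce to the action on the fiber $\Ktop[0](\cC_0)$, then use that $G$ acts through the connected group $\Aut^0_{\cC_0}$. The paper asserts this last step without further argument, whereas you justify it with a family argument over chains of curves in $\Aut^0_{\cC_0}$. Your caveat about $S$ being disconnected is unnecessary, since varieties are integral by the paper's conventions.
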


\begin{proof}
    Since $\Ktop[0](\cC/S)$ is a local system, it suffices to show that $G$ acts trivially on the fiber $\Ktop[0](\cC_0)$. 
    This holds because by assumption $G$ acts through the connected group $\Aut^0(\cC_0)$. 
\end{proof}


\section{Smoothness of moduli spaces} 
\label{section-smoothness-moduli}

\subsection{Moduli spaces} 
Let $\cC$ be a smooth proper category of geometric origin over a base scheme $S$. 
Then there is an algebraic stack $\cM(\cC/S)$, locally of finite presentation over $S$, which parameterizes the \emph{gluable} objects in $\cC$, i.e. the objects with vanishing negative self-Ext groups. 
Explicitly, the modui functor is given by 
\begin{alignat}{3}
\nonumber \cM(\cC/S) \colon  &  (\Sch/S)^{\op} ~~ &  \to & ~~ \Grpd & \\ 
\nonumber & \qquad T & \mapsto 
& \set{E \in \cC_T ~ | ~  \Ext^{<0}_{\kappa(t)}(E_t, E_t) = 0 \text{ for all } t \in T} & ,  
\end{alignat} 
where $E_t$ denotes restriction of $E$ to the fiber category $\cC_t$. 
The algebraicity of $\cM(\cC/S)$ can be deduced from Lieblich's work \cite{lieblich}, and is established more generally for smooth proper categories not necessarily of geometric origin by To\"{e}n and Vaqui\'{e} \cite{toen-moduli} (see the discussions in \cite[\S9]{stability-families} and \cite[\S7.1]{IHC-CY2}). 

Now assume that $S$ is a complex variety, so that we have at our disposal the Hodge theory for $\cC$ discussed in \S\ref{section-hodge-theory}. 
Then any object $E \in \cC$ gives rise to a section $v_E$ of the local system $\Ktop[0](\cC/S)$. 
If we are given a section $v$ of the local system $\Ktop[0](\cC/S)$, we can thus consider the subfunctor $\cM(\cC/S, v)$ of $\cM(\cC/S)$ parameterizing objects of class $v$. 
Explicitly, the moduli functor is given by 
\begin{alignat}{3}
\nonumber \cM(\cC/S, v) \colon  &  (\Sch/S)^{\op} ~~ &  \to & ~~ \Grpd & \\ 
\nonumber & \qquad T & \mapsto 
& \set{E \in \cC_T ~ | ~  v_{E} = v_T \text{ and } \Ext^{<0}_{\kappa(t)}(E_t, E_t) = 0 \text{ for all } t \in T} & ,  
\end{alignat} 
where $v_T$ denotes the pullback of $v$ to a section of $\Ktop[0](\cC_T/T)$. 
Then $\cM(\cC/S, v) \subset \cM(\cC/S)$ is an open substack, and hence in particular also algebraic and locally of finite type over $S$ \cite[Lemma 7.3]{IHC-CY2}. 

\subsection{Smoothness} 
The following smoothness result for the above moduli space is one of the main technical ingredients in this paper. 

\begin{theorem}
\label{theorem-semiregular-smooth} 
Let $\cC$ be a smooth proper $S$-linear category of geometric origin, where $S$ is a complex variety. 
Let $v$ be a section of $\Ktop[0](\cC/S)$. 
Let $0 \in S(\bC)$ be a point and let $E_0 \in \cC_0$ be a gluable semiregular object whose class is equal to the fiber $v_0$ of $v$ at $0$. 
Then the morphism $\cM(\cC/S, v) \to S$ is smooth at $E_0$. 
\end{theorem}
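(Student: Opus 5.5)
The plan is to verify smoothness of $\cM(\cC/S, v) \to S$ at $E_0$ by the infinitesimal lifting criterion. Since $\cM(\cC/S,v)$ is locally of finite presentation over $S$, it suffices to show the following. Let $A' \to A$ be a small extension of local Artinian $\bC$-algebras with kernel $I \cong \bC$, and let $\Spec(A') \to S$ be a map sending the closed point to $0$. Then every $A$-point $E_A \in \cC_A$ of $\cM(\cC/S,v)$ lifting $E_0$ lifts to an $A'$-point.

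The first step is the deformation theory of gluable objects, as used in \cite{lieblich, toen-moduli}. The obstruction to lifting $E_A$ is a class
\begin{equation*}
\mathrm{ob}(E_A) \in \Ext^2_{\bC}(E_0, E_0) \otimes I .
\end{equation*}
The class condition $v_E = v_T$ is automatic for deformations over Artinian bases, because $\Ktop[0]$ is a local system and $\cM(\cC/S,v) \subset \cM(\cC/S)$ is open. Gluability is likewise an open condition. So only this obstruction needs to be killed.

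The second step is the key input, Pridham's semiregularity theory \cite{pridham}, adapted to the categorical semiregularity morphism $\sigma_{E}$ of \cite[Construction~5.10]{PI-abelian3folds}. Pridham shows that the semiregularity map sends obstructions for $E$ to obstructions for a deformation problem governed by Hochschild (equivalently, de Rham/Hodge) data. More precisely, $\sigma^2_{E_0}(\mathrm{ob}(E_A)) \in \HH_{-2}(\cC_0)\otimes I$ equals the obstruction to extending the ``Chern character'' of $E_A$ as a flat section that stays in the appropriate Hodge filtration step of the periodic cyclic homology $\mathrm{HP}(\cC_{A'}/A')$. By Gauss--Manin rigidity, that class is the parallel transport of $\ch(E_A)$. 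Its being of Hodge type over $A'$ is therefore a question about the variation of Hodge structure, and degeneration of the Hodge-to-de Rham spectral sequence for smooth proper $\cC$ of geometric origin makes that problem unobstructed whenever the class is of Hodge type.

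So the third step is to show that this Hodge-theoretic obstruction vanishes. The class $v$ is a global section of $\Ktop[0](\cC/S)$, and its image in $\Ktop[0](\cC_{s})$ is algebraic at $0$, hence Hodge. Its de Rham avatar in $\mathrm{HP}_0(\cC/S)$ is therefore a flat section. I expect to need the following compatibility: over the formal neighbourhood, the Chern character of any lift of $E_A$ is determined by $v$ via the comparison $\Ktop[0](\cC/S)\otimes \cO \cong \mathrm{HP}_0(\cC/S)$ from \cite{IHC-CY2}. Granting it, the obstruction in $\HH_{-2}(\cC_0)\otimes I$ vanishes as soon as $v$ stays in $F^0$ over $A'$. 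That holds because $E_A$ itself exists over $A$ and semiregularity is a first-order-in-$I$ statement. Then injectivity of $\sigma^2_{E_0}$ gives $\mathrm{ob}(E_A)=0$, so the lift exists.

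The main obstacle is the second step, namely transporting Pridham's results from schemes to $S$-linear categories of geometric origin. To do this I would write $\cC \subset \Dperf(X)$ as a semiorthogonal component with $X \to S$ smooth proper. Pushing $E$ forward to $\Dperf(X)$ via the inclusion, Remark~\ref{remark-semiregularity-map-functorial} makes $\sigma_{E}$ compatible with $\sigma_{E}$ computed in $\Dperf(X)$. Since $\HH_*(\cC)$ is a direct summand of $\HH_*(X)$, injectivity of $\sigma^2_{E_0}$ is preserved. Deformations of $E$ in $\cC$ agree with those in $\Dperf(X)$ because $\cC$ is an admissible $S$-linear component, so Ext groups and obstruction classes match. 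This reduces the problem to Pridham's theorem for perfect complexes on the smooth proper family $X/S$, where $\ch(E_0)$ is the image of $v_0$ and the relevant Hodge condition holds because $v$ is a global section of the local system.
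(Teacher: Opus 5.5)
Your outline matches the paper's proof: small extensions, Pridham's result that $\sigma^2$ kills the obstruction class when the class stays of Hodge type, injectivity from semiregularity of $E_0$, and reduction to $\Dperf(X)$ through a semiorthogonal embedding. However, the one step where the hypothesis on $v$ has to enter is justified incorrectly.

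You claim that $v$ stays in $F^0$ over $A'$ ``because $E_A$ itself exists over $A$ and semiregularity is a first-order-in-$I$ statement.'' The existence of $E_A$ only puts the Chern character in $F^0$ over $A$. The condition over $A'$ is precisely the Hodge-theoretic obstruction, and it can fail. For example, a line bundle on a K3 surface is semiregular. Yet over a curve in moduli that is tangent to its Noether--Lefschetz locus only to first order, it deforms over $\bC[t]/t^2$ but not over $\bC[t]/t^3$. If your reasoning were valid, every semiregular object would be unobstructed in every direction, and the section $v$ would play no role.

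What is actually needed is the \emph{global} hypothesis on $v$. Since $v$ is a section of $\Ktop[0](\cC/S)$ over all of the algebraic base $S$, and $v_0$ is Hodge (being the class of $E_0$), every fiber $v_s$ is Hodge. This is the third item of Remark~\ref{remark-semiregular-smooth-theorem}, a theorem-of-the-fixed-part statement.

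Flatness of $v$, which is all your final paragraph invokes, does not give this. On a small analytic neighbourhood of $0$, as in the K3 example, a flat section that is Hodge at $0$ need not be Hodge nearby.

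Once $v_s$ is Hodge for all $s$, the image of $v$ in the quotient of the de Rham bundle by $F^0$ is a holomorphic section vanishing at every point of the reduced space $S$. It therefore vanishes identically, in particular after pullback along $\Spec(A') \to S$. This is what the paper means by ``the class of $E$ remains of Hodge type along $S$ by assumption.'' With this in place of your justification, the rest of your argument goes through as in the paper, including the induction over a chain of small extensions.
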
 

\begin{remark}
\label{remark-semiregular-smooth-theorem}
Before turning to the proof, we make some comments on the theorem. 
\begin{enumerate}
\item \label{remark-IHC-CY2} 
Theorem~\ref{theorem-semiregular-smooth} generalizes the smoothness result \cite[Theorem 1.4]{IHC-CY2} for moduli of simple gluable objects in a CY2 category (which in turn generalizes Mukai's classical smoothness result for moduli of simple sheaves on a CY2 surface). 
Indeed, if $E_0 \in \cC$ is a simple gluable object in a CY2 category, then $E_0$ is automatically semiregular by \cite[Lemma 7.4]{IHC-CY2}. 

\item \label{semiregular-smooth-theorem-rational} There is a slight variant of Theorem~\ref{theorem-semiregular-smooth} --- which holds by the same proof --- where we consider a section of $\Ktop[0](\cC/S) \otimes \bQ$ instead of $\Ktop[0](\cC/S)$. 

\item 
By general theory, the fibers $v_s \in \Ktop[0](\cC_s)$ of $v$ are automatically Hodge for all $s \in S(\bC)$, even only assuming that $v_0$ is a Hodge class (as opposed to the class of a gluable semiregular object) \cite[Lemma 5.22]{IHC-CY2}. 
\end{enumerate}
\end{remark}

\begin{proof}[Proof of Theorem~\ref{theorem-semiregular-smooth}]
As the morphism $\cM(\cC/S,v) \to S$ is locally of finite type, 
it suffices to prove that it is formally smooth at the $\bC$-point $E_0$. 
In other words, let 
\begin{equation}
\label{sqz}
    0 \to I \to A' \to A \to 0
\end{equation} 
be a square-zero extension of Artinian local $\bC$-algebras with residue field $\bC$, let $\Spec(A') \to S$ be a morphism taking the closed point to $0 \in S$, and let $E \in \cC_A$ be a gluable object (where $\cC_A$ is the base change of $\cC$ along $\Spec(A) \to \Spec(A') \to S$) whose restriction to $\cC_0$ is isomorphic to $E_0$. 
Then we must show that $E$ lifts to an object $E' \in \cC_{A'}$. 
Such a lift exists if and only if the corresponding obstruction class 
\begin{equation*}
    \ob(E) \in \Ext^2_A(E, E \otimes I) 
\end{equation*}
vanishes (see \cite[Lemma 4.8]{IHC-CY2}).

Since the class of $E$ remains of Hodge type along $S$ by assumption, 
it follows from Pridham's work \cite{pridham} that the semiregularity map 
\begin{equation*}
    \sigma_{E,I}^2 \colon \Ext^2_A(E,E \otimes I) \to \HH_{-2}(\cC_A/A, I)
\end{equation*}
kills the obstruction class $\ob(E)$. 
More precisely, if $X/S$ is a smooth proper $S$-scheme for which there exists an embedding $\cC \hookrightarrow \Dperf(X)$ as a semiorthogonal component, 
then the description of $\Ktop[0](\cC/S)$ as a summand of $\Ktop[0](X/S)$ together with \cite[\S2.5]{pridham} shows that $\sigma_{E,I}^2$ kills $\ob(E)$. 

By a standard argument (see \cite[\href{https://stacks.math.columbia.edu/tag/06GE}{Tag 06GE}]{stacks-project}), 
to prove the desired lift of $E$ to $\cC_{A'}$ exists, we may reduce to the case where~\eqref{sqz} is a small extension, i.e. where $I$ is annihilated by the maximal ideal $\fm_{A'}$ and $I \cong \bC$ as a vector space over $\bC = A'/\fm_{A'}$. 
In this case, note that if $i \colon \Spec(\bC) \to \Spec(A)$ is the closed point, then $E \otimes I \cong i_*i^*E \cong i_*(E_0)$, and thus adjunction gives an equivalence of mapping complexes  
\begin{equation*}
    \cHom_A(E, E \otimes I) \cong i_*\cHom_\bC(E_0, E_0).
\end{equation*}
Similarly, we have equivalences 
\begin{equation*}
\cHH_*(\cC_A/A, I)  \simeq \cHH_*(\cC_A/A) \otimes I \simeq i_* i^*\cHH_*(\cC_A/A)  \simeq i_* \cHH_*(\cC_0/\bC), 
\end{equation*}
where the first holds by Remark~\ref{remark-HH-F} and the last by base change for Hochschild homology \cite[Lemma 3.4]{IHC-CY2}. 
Under the above identifications, the semiregularity morphism 
\begin{equation*}
    \sigma_{E,I} \colon \cHom_A(E, E \otimes I) \to \cHH_*(\cC_A/A, I) 
\end{equation*}
identifies with (the pushforward along $i$) of the semiregularity morphism 
\begin{equation*}
    \sigma_{E_0} \colon \cHom_{\bC}(E_0, E_0) \to \cHH_*(\cC_0/\bC). 
\end{equation*}
In particular, since $E_0$ is semiregular, we find that $\sigma_{E,I}^2$ is injective. 
Since we saw above that $\sigma_{E,I}^2$ also kills $\ob(E)$, we find that $\ob(E) = 0$ and the desired lift of $E$ to $\cC_{A'}$ exists. 
\end{proof}


\section{Geometrizing group actions on derived categories} 
\label{section-geometrization} 

Let $\cC$ be a smooth proper $S$-linear category of geometric origin, where $S$ is a variety over a field of characteristic $0$, and assume that $\cC$ is equipped with an action by a finite group~$G$. 
In this setting, the invariant category $\cC^G$ is a smooth proper $S$-linear category \cite[Proposition~3.15]{bayer-perry}, but it is unknown whether $\cC^G$ is necessarily of geometric origin, even if $\cC = \Dperf(X)$. 
Our goal here is to explain that the situation is better when $G$ acts through the identity component of the group of autoequivalences, focusing on the case when 
$\cC$ is the (twisted) derived category of a variety. 

The key observation, due to Olsson \cite{olsson} and explained to us by James Hotchkiss, is that many autoequivalences of $\Dperf(X)$ can be lifted to \emph{automorphisms} of a suitable gerbe. 
When a group action on $\Dperf(X)$ lifts to an action on a gerbe in this way, the invariant category is a semiorthogonal component of the quotient stack of the gerbe; in good cases, this implies that the invariant category is of geometric origin. 

\subsection{Automorphisms of gerbes and their action on derived categories}

\begin{notation}
Let $X \to S$ be a smooth proper morphism of schemes with geometrically integral fibers. 
We write: 
\begin{itemize}
    \item $\cPic_{X/S}$ for the Picard stack of $X$ over $S$, which is a $\bG_m$-gerbe over the Picard space $\Pic_{X/S}$.
    \item $\Aut_{X/S}$ for the automorphism space of $X$ over $S$.
\end{itemize} 
Then $\cPic_{X/S}$ is a group algebraic stack locally of finite type over $S$, and $\Pic_{X/S}$ and $\Aut_{X/S}$ are group algebraic spaces locally of finite type over $S$. 
\end{notation}

\begin{definition}
    If $G \to \Spec(k)$ is a group algebraic space locally of finite type over a field, we denote by $G^{0} \subset G$ the \emph{identity component} of $G$, i.e. the open subgroup given by the connected component of the identity. 
If $G \to S$ is a group algebraic space locally of finite type over a scheme $S$, we consider the union of the the identity components $G_s^0 \subset G_s$, $s \in S$, of all fibers; when this is an open subset $G^0 \subset G$, then it is a subgroup called the \emph{identity component} of $G \to S$, and we say ``the identity component exists'' to signify this situation.
\end{definition}

While the identity component does not exist in general, it often does when $G \to S$ is a group of automorphisms or autoequivalences associated to a family of Calabi--Yau varieties or categories; see e.g. \cite[Lemma~8.11 and~8.14]{PI-abelian3folds}. We will also describe an instance of this phenomenon in Theorem~\ref{theorem-Aut-mun-gerbe}\eqref{identity-components} below. 

\begin{theorem}[{\cite{olsson}}]
\label{theorem-Aut-mun-gerbe}
    Let $f \colon X \to S$ be a smooth proper morphism of schemes with geometrically integral fibers. 
    Let $\cX \to X$ be a $\bmu_n$-gerbe, and let $\cY \to X$ be its pushout to a $\bG_m$-gerbe. 
    Let $\cAut_{\cX/S}$ and $\cAut_{\cY/S}$ be the stacks of automorphisms of $\cX$ and $\cY$ over $S$ which induce the identity on stabilizer group schemes $\bmu_n$ and~$\bG_m$. 
    \begin{enumerate}
        \item $\cAut_{\cX/S}$ is a group algebraic stack locally of finite type over $S$, which is a $\bmu_n$-gerbe over a group algebraic space $\Aut_{\cX/S}$. 
        \item \label{cAutY-exists} $\cAut_{\cY/S}$ is a group algebraic stack locally of finite type over $S$, which is a $\bG_m$-gerbe over a group algebraic space $\Aut_{\cY/S}$. 
        \item \label{Aut-ses}
        The natural morphism $\cAut_{\cX/S} \to \cAut_{\cY/S}$ of group algebraic stacks induces a commutative diagram of exact sequences of group algebraic spaces over $S$: 
        \begin{equation}
        \label{Pic-Aut-Aut}
        \begin{tikzcd}
        1 \ar{r} & \Pic_{X/S}[n] \ar{r} \ar{d} & \Aut_{\cX/S} \ar{r} \ar{d} & \Aut_{X/S} \ar{d} \\ 
        1 \ar{r} & \Pic_{X/S} \ar{r} & \Aut_{\cY/S} \ar{r} & \Aut_{X/S} . 
        \end{tikzcd}
        \end{equation}
        \item \label{Aut-0-surjective}
        If $n$ is invertible on $S$ and the identity components $\Aut_{\cX/S}^0$, $\cAut_{\cY/S}^0$, and $\Aut_{X/S}^0$ exist, then the maps $\Aut_{\cX/S}^0 \to \Aut_{X/S}^0$ and $\Aut_{\cY/S}^0 \to \Aut_{X/S}^0$ on identity components are surjective. 
        \item \label{identity-components} 
        If $\omega_{X/S} = f^*L$ for a line bundle $L$ on $S$ and $S$ is a locally noetherian $\bQ$-scheme, 
        then the identity components $\Aut_{X/S}^0$, $\Aut_{\cX/S}^0$, and $\Aut_{\cY/S}^0$ exist and are finite type over~$S$; moreover, if $S$ is reduced, then they are smooth and proper over $S$. 
    \end{enumerate}
\end{theorem}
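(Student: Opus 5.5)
The plan is to prove the five parts in order, using deformation theory of gerbes together with the known structure of $\Aut_{X/S}$ and $\Pic_{X/S}$.

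\emph{Parts (1) and (2).} Algebraicity comes from the Hom-stack results for proper flat stacks (Olsson's Hom stacks). We take $\underline{\mathrm{Hom}}_S(\cX,\cX)$ and cut out the substack of automorphisms inducing the identity on the band $\bmu_n$. This substack is open and closed, because the induced map on bands is a locally constant invariant in $\Aut(\bmu_n)$. Every such automorphism has a canonical $2$-automorphism group. For banded gerbes this group is the centre of the band, namely $\bmu_n$, by the standard computation of automorphisms of $1$-morphisms of banded gerbes. Rigidifying along this $\bmu_n$ then exhibits $\cAut_{\cX/S}$ as a $\bmu_n$-gerbe over the algebraic space $\Aut_{\cX/S}$. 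The case of $\cY$ with band $\bG_m$ is identical.

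\emph{Part (3).} Write $\theta\in\rH^2(X,\bmu_n)$ for the class of $\cX$. An automorphism $\phi$ of $\cX$ over $S$ covers an automorphism $g$ of $X$ with $g^*\theta=\theta$, which gives the maps to $\Aut_{X/S}$. The kernel consists of automorphisms of $\cX$ over $\id_X$ respecting the band. These are classified by $\bmu_n$-torsors on $X$, i.e.\ by $\rH^1(X,\bmu_n)$ with $\bmu_n$ modded out as $2$-automorphisms. After rigidification, and fppf-locally on $S$, this kernel is $\Pic_{X/S}[n]$, using the Kummer sequence and $f_*\bG_m=\bG_m$, which holds because the fibres are geometrically integral. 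For $\cY$ the same argument gives $\bG_m$-torsors, i.e.\ $\Pic_{X/S}$. The pushout along $\bmu_n\to\bG_m$ makes the diagram commute and induces the inclusion $\Pic[n]\subset\Pic$.

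\emph{Part (4).} Surjectivity on identity components is a lifting statement. Let $g$ be a point of $\Aut^0_{X/S}$ over a strictly henselian or geometric point. The obstruction to lifting $g$ to $\cX$ is $g^*\theta-\theta\in\rH^2(X,\bmu_n)$, a section of the étale sheaf $\rR^2f_*\bmu_n$. This sheaf is locally constant constructible when $n$ is invertible, by smooth and proper base change. The map $g\mapsto g^*\theta-\theta$ is locally constant on $\Aut^0_{X/S}$ and vanishes at the identity, so it vanishes on the whole connected component in each fibre. Hence every point of $\Aut^0$ lifts. The lifts form a torsor under $\Pic[n]$, which is finite étale, so the image of $\Aut^0_{\cX/S}$ is open and closed in $\Aut^0_{X/S}$ and contains the identity. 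Since the fibres of $\Aut^0_{X/S}$ are connected, the map is surjective. For $\cY$ we use that the class in $\rH^2(X,\bG_m)$ is pushed from $\theta$.

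\emph{Part (5).} This is the main obstacle. When $\omega_{X/S}$ is pulled back from $S$, in characteristic $0$, we use the following inputs:
\begin{itemize}
\item $\Aut^0_{X_s}$ is an abelian variety, since the Albanese-type argument rules out linear parts for varieties with trivial canonical bundle;
\item $\Pic^0_{X_s}$ is an abelian variety;
\item the identity components are smooth by Cartier's theorem.
\end{itemize}
Openness of the union of fibrewise identity components comes from constancy of tangent space dimensions. We have $\dim \rH^0(T_{X_s})=\dim\rH^{n-1}(\Omega^1)$ by Serre duality with $\omega$ trivial, and this is locally constant by Hodge theory and the degeneration of Hodge-to-de Rham. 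Combined with the smoothness of fibres, this yields flatness and then smoothness of $\Aut^0_{X/S}$ over reduced $S$. Properness follows from the valuative criterion, using that a family of abelian varieties over a punctured curve with a smooth proper total family extends.

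For the gerbes, $\Aut^0_{\cX/S}$ is an extension of $\Aut^0_{X/S}$ by the finite étale group $\Pic[n]$, via part (3) and part (4). It inherits finite type, smoothness and properness. The group $\Aut^0_{\cY/S}$ is an extension of $\Aut^0_{X/S}$ by $\Pic^0_{X/S}$, smooth and proper in the same way, using the known properness and smoothness of $\Pic^0$ in this setting. I expect the delicate points to be the existence and openness of these identity components in families, and the properness of $\Aut^0_{X/S}$. If needed I would cite Olsson's treatment of these for the commutative setting rather than reprove them.
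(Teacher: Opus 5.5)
Your proposal is correct in substance, but it takes a different, self-contained route. The paper's proof is essentially a citation. It quotes Olsson's Theorem~1.1 for (2)--(4) in the case of a $\bG_m$-gerbe over a field, and asserts that the same argument, or passing to fibres, gives the relative and $\bmu_n$ versions. It deduces (5) from the argument of \cite[Lemma~8.14]{PI-abelian3folds}, which proves the analogous statement for the autoequivalence group $\Aut_{\Dperf(X,\alpha)/S}$ under the same Calabi--Yau hypothesis; that group is needed later in the paper anyway.

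You instead supply the mechanisms:
\begin{itemize}
\item rigidification along the centre $\bmu_n(X_T)=\bmu_n(T)$ of the band;
\item the identification of the kernel with $\rR^1f_*\bmu_n\cong\Pic_{X/S}[n]$ via Kummer and $f_*\bG_m=\bG_m$;
\item for (4), rigidity of $\theta$ along the connected group $\Aut^0_{X_s}$: by proper base change, $g\mapsto g^*\theta-\theta$ is a section of a constant sheaf vanishing at the identity. This is followed by the open-and-closed-image argument, using that $\Pic_{X/S}[n]$ is finite \'etale.
\end{itemize}
This makes (1)--(4) self-contained and shows exactly where invertibility of $n$ enters. The paper's route buys brevity and uniformity with the categorical setting. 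Your (5) remains a sketch, working with $\Aut_{X/S}$ and $\Pic_{X/S}$ directly rather than through autoequivalences.

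Several points need fixing when you write it out.
\begin{itemize}
\item With $d=\dim X_s$, $\rH^0(T_{X_s})$ is dual to $\rH^{d}(\Omega^1_{X_s})$ (equivalently $T_{X_s}\cong\Omega^{d-1}_{X_s}$), not $\rH^{d-1}(\Omega^1_{X_s})$. This is harmless, since all Hodge numbers are locally constant.
\item The extension of $\Aut^0_{X/S}$ by $\Pic_{X/S}[n]$ is the preimage $\Gamma$ of $\Aut^0_{X/S}$, and $\Aut^0_{\cX/S}=\Gamma^0$ can be strictly smaller. For the trivial gerbe, $\Gamma\cong\Aut^0_{X/S}\times\Pic_{X/S}[n]$ is disconnected while $\Aut^0_{\cX/S}=\Aut^0_{X/S}$. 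So you need $\Gamma^0$ to be open and closed in $\Gamma$, which holds because $\Gamma$ is finite \'etale over $\Aut^0_{X/S}$, hence smooth and proper.
\item Similarly, the kernel of $\Aut^0_{\cY/S}\to\Aut^0_{X/S}$ is only known to be an open and closed subgroup of $\Pic_{X/S}$ containing $\Pic^0_{X/S}$, not $\Pic^0_{X/S}$ itself.
\item For properness, the valuative criterion requires extending an automorphism of $X_K$ to $X_R$, and the punctured-curve sentence does not do this. The clean input has two parts. First, $\Aut_{X/S}$ is separated, being open in a Hilbert space. Second, a smooth separated finite-type group over a DVR (here of residue characteristic $0$) whose fibres are abelian varieties is an abelian scheme, e.g.\ by N\'eron--Ogg--Shafarevich and the N\'eron mapping property.
\item Existence and finite type over non-reduced $S$ follow by passing to $S_{\mathrm{red}}$, since these are topological statements.
\item Olsson's Hom-stack theorem needs finite diagonal, so it does not cover $\cY$. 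Use Hall--Rydh instead, or build $\cAut_{\cY/S}$ over $\Aut_{X/S}$ from band-preserving isomorphisms $\cY\simeq g^*\cY$.
\item In (4), surjectivity of $\Aut^0_{\cY/S}\to\Aut^0_{X/S}$ is best obtained by factoring through $\Aut^0_{\cX/S}$, because your open-and-closed argument used finiteness of the kernel.
\end{itemize}
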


\begin{proof}
    For the $\bG_m$-gerbe $\cY$ and $S = \Spec(k)$ a point, the claims in \eqref{cAutY-exists}-\eqref{Aut-0-surjective} hold by \cite[Theorem 1.1]{olsson}. 
    The case of general $S$ holds by the same argument (or by reducing to the case of a field by passing to fibers), as do the corresponding statements for the $\bmu_n$-gerbe $\cX \to X$. 
    Finally, \eqref{identity-components} holds by the argument in the proof of \cite[Lemma 8.14]{PI-abelian3folds}.  
\end{proof}

\begin{remark}
    In the situation of Theorem~\ref{theorem-Aut-mun-gerbe}, there are also similar results for the stack of autoequivalences of $\Dperf(X, \alpha)$, where $\alpha \in \Br(X)$ is the class associated to $\cX \to X$. 
    More precisely, there is a group algebraic stack $\cAut_{\Dperf(X,\alpha)/S}$ locally of finite type over $S$ parameterizing such autoequivalences, which is a $\bG_m$-gerbe over an algebraic space $\Aut_{\Dperf(X,\alpha)/S}$ --- see \cite[Proposition~8.2]{PI-abelian3folds} for a more general result, which holds for smooth proper connected categories over $S$. 
    Moreover, by \cite[Lemma~8.14]{PI-abelian3folds} already invoked in the proof above, under the assumptions of Theorem~\ref{theorem-Aut-mun-gerbe}\eqref{identity-components} the identity component $\Aut^0_{\Dperf(X,\alpha)/S}$ exists and is smooth and proper over $S$. 
\end{remark}

\begin{notation}
    When the base $S = \Spec(k)$ is the spectrum of a field which is clear from context, we often omit $S$ in the notation for the automorphism stacks and spaces discussed above. 
    For instance, in this case, we write simply $\Aut_{\cX}$ for $\Aut_{\cX/S}$. 
\end{notation}

\begin{definition}
If $G$ is a finite group acting on a gerbe $\cX$ or $\cY$ in the situation of Theorem~\ref{theorem-Aut-mun-gerbe}, we say that $G$ \emph{acts by gerbe automorphisms} if for every $g \in G$, the corresponding automorphism acts as the identity on stabilizer groups. 
\end{definition}

\begin{remark}
\label{remark-group-actions}
    The notion of an action of a group $G$ on an algebraic stack $\cX$ is somewhat subtle, because the category of algebraic stacks naturally forms a $2$-category. 
    Similarly, if $\cC$ is a linear category, such as a twisted derived category $\cC = \Dperf(X, \alpha)$, then group actions must be understood in the $\infty$-categorical sense. 
    In this paper, the term ``group action'' always refers to the natural higher-categorical notion, discussed in detail in \cite[\S3.1]{bayer-perry} (see also \cite{romagny} for the case of stacks). 
    Nonetheless, for emphasis we sometimes speak of a \emph{$2$-categorical action} of $G$ on a stack $\cX$, or an \emph{$\infty$-categorical action} of $G$ on a linear category $\cC$. 
    
    Any such action also gives rise to a \emph{1-categorical action} of $G$, which is the data of a homomorphism from $G$ to the relevant group of automorphisms (modulo equivalences of automorphisms). 
    For instance, if $\psi$ is an action of a finite group $G$ on a $\bmu_n$-gerbe $\cX$ by gerbe automorphisms as in Theorem~\ref{theorem-Aut-mun-gerbe}, then the associated $1$-categorical action is a homomorphism 
    \begin{equation*}
        \psi_1 \colon G \to \pi_0(\cAut_{\cX/S}(S)) ,
    \end{equation*}
    where $\pi_0(\cAut_{\cX/S}(S))$ denotes the group of isomorphism classes of objects in the groupoid $\cAut_{\cX/S}(S)$. 
    Similarly, if $\phi$ is an action of $G$ on $\Dperf(X, \alpha)$ as an $S$-linear category, then the associated $1$-categorical action is a homomorphism 
    \begin{equation*}
        \phi_1 \colon G \to \pi_0(\cAut_{\Dperf(X,\alpha)/S}(S)). 
    \end{equation*}
\end{remark}

\begin{remark}
\label{remark-variant-1-categorical-action}
    Let us describe a slightly different perspective on a $1$-categorical action. 
    We begin with a general fact. 
    Let $\cM \colon (\Sch/S)^{\op} \to \Grpd$ be a stack with respect to the \'{e}tale topology.  
    We denote by 
    \begin{equation*}
        \pi_0(\cM) \colon (\Sch/S)^{\op} \to \Set 
    \end{equation*}
    the presheaf $T \mapsto \pi_0(\cM(T))$, where $\pi_0(\cM(T))$ denotes the set of isomorphism classes of the groupoid $\cM(T)$, and by 
    \begin{equation*}
        \pi_0^{\mathrm{sh}}(\cM) \colon (\Sch/S)^{\op} \to \Set 
    \end{equation*} 
    the \'{e}tale sheafification of $\pi_0(\cM)$. 
    In general, if $\cM$ is algebraic the sheaf of sets $\pi_0^{\sh}(\cM)$ need not be algebraic, but if $\cM \to M$ is a gerbe over an algebraic space $M$ over $S$, then this is true and in fact $\pi_0^{\sh}(\cM) \cong M$ \cite[\href{https://stacks.math.columbia.edu/tag/06QD}{Tag 06QD}]{stacks-project}.

    Now suppose that  $\cX \to X$ is a $\bmu_n$-gerbe as in Theorem~\ref{theorem-Aut-mun-gerbe}. 
    Then $\cAut_{\cX/S} \to \Aut_{\cX/S}$ is a $\bmu_n$-gerbe, so that by the above 
    \begin{equation}
    \label{pi0sh-cAut}
        \pi_0^{\sh}(\cAut_{\cX/S}) \cong \Aut_{\cX/S} . 
    \end{equation} 
    In particular, there is a natural homomorphism 
    \begin{equation}
    \label{pi0-Aut}
        \pi_0(\cAut_{\cX/S}(S)) \to \Aut_{\cX/S}(S). 
    \end{equation}
    Given a $1$-categorical action 
    \begin{equation*}
        \psi_1 \colon G \to \pi_0(\cAut_{\cX/S}(S))
    \end{equation*}
    by composing we obtain a homomorphism 
    \begin{equation}
    \label{psi'_1}
        \psi'_1 \colon G \to \Aut_{\cX/S}(S) 
    \end{equation}
    Conversely, suppose that we are given a homomorphism 
    \begin{equation*}
        \psi'_1 \colon G \to \Aut_{\cX/S}(S). 
    \end{equation*}
    This does not automatically lift to a $1$-categorical action, since the homomorphism~\eqref{pi0-Aut} need not be an isomorphism. 
    However, given any point $s \in S$, it follows from~\eqref{pi0sh-cAut} that there exists an \'{e}tale neighorhood $U \to S$ of $s$ such that a lift exists after base change to $U$, i.e. there exists a $1$-categorical action 
    \begin{equation*}
        (\psi_U)_1 \colon G \to \pi_0(\cAut_{\cX_U/U}(U))
    \end{equation*}
    which induces the homomorphism 
    \begin{equation*}
     (\psi'_1)_U \colon G \xrightarrow{\, \psi'_1 \, } \Aut_{\cX/S}(S) \to \Aut_{\cX_U/U}(U). 
    \end{equation*} 
    In this sense, at least up to working \'{e}tale locally, there is no difference between a~$1$-categorical action and a homomorphism~\eqref{psi'_1}.  
    This argument also shows that if $S = \Spec(k)$ is the spectrum of an algebraically closed field, then there is no difference between the two notions, since~\eqref{pi0-Aut} is an isomorphism. 

    We note that the above remarks also apply to $1$-categorical actions on other objects, e.g. on the twisted derived category $\Dperf(X,\alpha)$. 
\end{remark}

\begin{warning}
    It is not true that a given $1$-categorical action must lift to a higher-categorical action. 
    The obstruction to the existence of such a lift is discussed in \cite[\S3]{bayer-perry} in general and in \cite[\S2.2]{oberdieck-equivariant} for linear categories. 
    This obstruction may be nontrivial even in very simple, natural examples. 
    For instance, \cite[\S3.6]{oberdieck-equivariant} shows that for an elliptic curve $E$, there exist $2$-torsion points $(a,b) \in E \times E^{\vee}$ such that if $t_{a}$ denotes translation by $a$ and $L_b$ the corresponding $2$-torsion line bundle, then the autoequivalence $\phi = t_a^*(-) \otimes L_b$ generates a $1$-categorical action of $\bZ/2$ on $\Dperf(E)$ that does not admit a $2$-categorical lift. 
\end{warning}

\begin{construction}[Action of $\cAut_{\cX/S}$ on $\Dperf(X,\alpha)$]
\label{remark-Aut-bmun-act-DXalpha}
    In the situation of Theorem~\ref{theorem-Aut-mun-gerbe}, let $\alpha \in \Br(X)$ be the Brauer class associated to the $\bmu_n$-gerbe $\cX \to X$ and assume that $n$ is invertible on $S$. 
    Recall (see e.g. \cite{bergh-BS}) that there is a completely orthogonal decomposition 
    \begin{equation*}
        \Dperf(\cX) = \langle \Dperf(X), \Dperf(X, \alpha), \Dperf(X, 2\alpha), \dots, \Dperf(X, (n-1)\alpha) \rangle. 
    \end{equation*}
    Since $\Dperf(X, i \alpha)$ is the subcategory of $\Dperf(\cX)$ where the inertia of $\cX$ acts with weight~$i$, we see that the action of any automorphism in $\cAut_{\cX/S}(S)$ preserves this semiorthogonal decomposition.

    Now suppose that $G$ is a finite group which acts on $\cX$ by gerbe automorphisms. 
    Then the corresponding action of $G$ on $\Dperf(\cX)$ preserves the above semiorthogonal decomposition. If $|G|$ is invertible on $S$, then taking invariants we find an $S$-linear completely orthogonal decomposition 
    \begin{equation}
    \label{sod-cX-mod-G}
        \Dperf(\cX/G) \simeq \Dperf(\cX)^G = \langle \Dperf(X)^G, \Dperf(X,\alpha)^G, \dots, \Dperf(X, (n-1)\alpha)^G \rangle.
    \end{equation}
    This shows that the invariant category $\Dperf(X, \alpha)^G$ is an $S$-linear semiorthogonal component of the derived category of the Deligne--Mumford quotient stack $\cX/G$. 
\end{construction}

\begin{remark}
    When $S = \Spec(k)$ is the spectrum of a field in which $n$ is invertible, 
    then it follows from the above construction and \cite{bergh-geometricity} that $\Dperf(X, \alpha)^G$ is of geometric origin. 
    In Lemma~\ref{lemma-DperfXalphaG-geometric} below, we will give a more explicit proof of this consequence, which depends on an additional assumption but has the benefit of generalizing to the relative situation. 
\end{remark}

\begin{construction}[Action of $\cAut_{\cY/S}$ on $\Dperf(X,\alpha)$]
    \label{remark-Aut-BGm-act-DXalpha}
    Similarly, in the situation of Theorem~\ref{theorem-Aut-mun-gerbe}, if $\alpha \in \Br(X)$ is the Brauer class associated to the $\bG_m$-gerbe $\cY \to X$, then any automorphism in $\cAut_{\cY/S}(S)$ acts naturally on $\Dperf(X, \alpha)$. 
    Moreover, for any action of a finite group $G$ on $\cY$ by gerbe automorphisms, 
    the invariant category $\Dperf(X, \alpha)^G$ is a semiorthogonal component of the quotient stack $\cY/G$. 
    We also note that the action of $\cAut_{\cX}$ on $\Dperf(X,\alpha)$ factors through the action of $\cAut_{\cY}$. 
\end{construction}

\begin{remark}
    In this section, we will largely focus on the story for $\bmu_n$-gerbes, instead of the simpler one for $\bG_m$-gerbes. 
    The reason is that, in good cases, for an action of a finite group $G$ on a $\bmu_n$-gerbe $\cX$ by gerbe automorphisms, we can prove that $\Dperf(\cX/G)$ --- and hence any of its semiorthogonal components, like $\Dperf(X, \alpha)^G$ --- is of geometric origin (see Lemmas~\ref{lemma-cX-mod-G-geometric} and~\ref{lemma-DperfXalphaG-geometric}). 
\end{remark}

\subsection{Lifting group actions on derived categories to gerbes} 
Now we explain how to geometrize certain actions on $\Dperf(X)$ in terms of the above constructions. 
First we focus on the absolute case, when the base is a field. 

\begin{lemma}
\label{lemma-geometrize-cY}
    Let $X$ be a smooth proper variety over a field $k$. 
    Let $G \subset \Aut_X(k) \ltimes \Pic_X(k)$ be a finite group which acts on $\Dperf(X)$ (through automorphisms and tensoring by line bundles). 
    Let $\cY \to X$ be the trivial $\bG_m$-gerbe. 
        Then there is an action of $G$ on $\cY$ by gerbe automorphisms which recovers via Construction~\ref{remark-Aut-BGm-act-DXalpha} the given $G$-action on $\Dperf(X)$.
\end{lemma}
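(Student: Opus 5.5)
The plan is to write down the gerbe automorphisms explicitly. The trivial $\bG_m$-gerbe is $\cY = X \times B\bG_m$. For an automorphism $a \in \Aut_X(k)$ and a line bundle $L$ on $X$, I take $\psi_{(a,L)}$ to be the automorphism of $\cY$ that covers $a$ on $X$ and is twisted by $L$. Concretely, a $T$-point of $\cY$ is a pair $(x, M)$ with $x \in X(T)$ and $M$ a line bundle on $T$. The automorphism sends it to $(a(x), M \otimes x^*L')$ for a suitable $L'$ built from $L$ (for instance $L' = (a^{-1})^*L$ or $L^{\vee}$, fixed by a sign convention). This is a morphism over $S$ that is the identity on the stabilizer $\bG_m$, so it is a gerbe automorphism.

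One point needs care. Under $\Dperf(\cY) = \bigoplus_i \Dperf(X) \otimes \chi^i$, the weight-one summand is $\Dperf(X,\alpha) = \Dperf(X)$. On that summand, pullback along $\psi_{(a,L)}$ acts by $E \mapsto a^*E \otimes L^{\pm1}$, up to the ordering convention, because on weight-$i$ objects the twist by $L'$ contributes $L'^{\otimes i}$. So the map $(a,L) \mapsto \psi_{(a,L)}$ (or its inverse) recovers the given autoequivalence via Construction~\ref{remark-Aut-BGm-act-DXalpha}. This is the $1$-categorical level, and it matches the lower row of \eqref{Pic-Aut-Aut} with its canonical splitting when the gerbe is trivial.

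The real content is upgrading this to a $2$-categorical action of $G$ on $\cY$ whose induced $\infty$-categorical action on $\Dperf(X)$ is the given one. By Remark~\ref{remark-group-actions}, the given action on $\Dperf(X)$ is the restriction of an action of the group $\Aut_X(k) \ltimes \Pic_X(k)$ to $G$. The point is that this action comes from a strict action of the group $\Aut_X \ltimes \cPic_X$ on $X \times B\bG_m$. It acts on $X$ via automorphisms and on the $B\bG_m$ factor by tensoring with the pulled-back line bundle, and the compositions $\psi_{g}\circ\psi_{h} \simeq \psi_{gh}$ come with canonical isomorphisms $a^*L_1 \otimes L_2 \cong \dots$ satisfying the cocycle condition. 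In fact the $2$-group $\cPic_X(k)$ acts, not merely the set $\Pic_X(k)$, so realising $G$ requires choosing line bundles $L_g$ together with isomorphisms $L_{gh} \cong a_h^* L_g \otimes L_h$ compatible over triples.

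I expect this step to be the main obstacle. The approach I would take is to observe that the data of an $\infty$-categorical action of $G$ on $\Dperf(X)$ through automorphisms and line bundles is exactly this cocycle data. It is a lift of $G \to \Aut_X(k)\ltimes \Pic_X(k)$ to the $2$-group $\Aut_X(k) \ltimes \cPic_X(k)$, as in \cite[\S3]{bayer-perry} and \cite[\S2.2]{oberdieck-equivariant}. The same data then defines a $2$-categorical action on $\cY$ by gerbe automorphisms. Compatibility of the two actions under Construction~\ref{remark-Aut-BGm-act-DXalpha} follows because pullback on the weight-one part is monoidal in the cocycle data.
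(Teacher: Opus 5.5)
Your proposal is correct and follows the paper's proof. Your explicit $\psi_{(a,L)}$ is Olsson's splitting $\cAut_{\cY}\simeq \Aut_X\ltimes\cPic_X$, which the paper cites, and your identification of $\infty$-categorical actions through automorphisms and line bundles with lifts to the $2$-group $\Aut_X(k)\ltimes\cPic_X(k)$ plays the role of \cite[Corollary~3.4]{bayer-perry}, which the paper uses to upgrade $2$-categorical agreement to $\infty$-categorical agreement (one small correction: the discrete group $\Aut_X(k)\ltimes\Pic_X(k)$ does not itself act on $\Dperf(X)$, as Oberdieck's example in the Warning of \S\ref{section-geometrization} shows, so only the $2$-group formulation you switch to afterwards is valid).
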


\begin{proof}
    By \cite[Example 1.2]{olsson}, there is a natural splitting 
    \begin{equation*}
        \cAut_{\cY} \simeq \Aut_{X} \ltimes \cPic_X 
    \end{equation*}
    as group stacks. 
    The splitting respects the actions of each side on $\Dperf(X)$. 
    This implies the result. 
    To be more precise, recall that we regard all actions on $\Dperf(X)$ in the $\infty$-categorical sense. A priori the above argument produces a $G$-action on $\cY$ that induces a $G$-action on $\Dperf(X)$ which agrees with the given one at the $2$-categorical level, but it follows from \cite[Corollary 3.4]{bayer-perry} that this also holds at the $\infty$-categorical level. 
\end{proof}

Now we prove a similar result for lifting group actions to $\bmu_n$-gerbes. 
For applications, instead of taking a trivial $\bmu_n$-gerbe $\cX \to X$, it is useful to allow $\cX \to X$ to only be \emph{essentially trivial}, meaning that its pushout to a $\bG_m$-gerbe $\cY \to X$ is trivial. 

\begin{proposition}
\label{proposition-lifting-action-D-to-cX}
    Let $X$ be a smooth proper variety over an algebraically closed field $k$ with $\omega_X \cong \cO_X$. 
    Let $G \subset \Aut^0_X(k) \times \Pic^0_X(k)$ be a finite group which acts on $\Dperf(X)$ (through automorphisms and tensoring by line bundles). 
    Let us denote by $\phi$ the given $G$-action on $\Dperf(X)$ and by 
    \begin{equation*}
        \phi_1 \colon G \to \Aut_{\Dperf(X)}(k) 
    \end{equation*}
    the associated $1$-categorical action.  
    Let $n$ be a multiple of $|G|$ which is invertible in~$k$. 
    Let $\cX \to X$ be a $\bmu_n$-gerbe which is essentially trivial. 
    \begin{enumerate}
        \item \label{barpsi1-exists} 
        Up to possibly replacing $n$ by a multiple $m$ and $\cX$ with its pushout along $\bmu_n \to \bmu_{m}$,  
        there exists a homomorphism $\bar{\psi}_1 \colon G \to \Aut_{\cX}(k)$ which fits into a commutative diagram 
        \begin{equation}
        \label{barpsi1}
        \begin{tikzcd}
        & \Aut_{\cX}(k) \ar[d]  \\ 
        G \ar{r}[swap]{\phi_1} \ar{ur}{\bar{\psi}_1}  & \Aut_{\Dperf(X)}(k) ,
        \end{tikzcd}
    \end{equation}
    where the vertical arrow is given by Construction~\ref{remark-Aut-bmun-act-DXalpha}. 
        \item \label{G-through-cX}
        Given a homomorphism $\bar{\psi}_1$ as in~\eqref{barpsi1-exists}, 
        there exists a finite group $G'$, a surjection $\pi \colon G' \twoheadrightarrow G$, and an action $\psi$ of $G'$ on $\cX$ by gerbe automorphisms 
        such that the induced $1$-categorical action is given by the composition 
    \begin{equation}
    \label{psi1}
        \psi_1 \colon G' \xrightarrow{\, \pi \, } G \xrightarrow{ \, \bar{\psi}_1 \,} \Aut_{\cX}(k). 
    \end{equation}
        \end{enumerate}
        Assuming further that $|G'|$ is invertible in $k$, then:   
        \begin{enumerate}[resume]
        \item \label{G'-action-lifting-G-action-DX}
        Up to possibly replacing $n$ by a  multiple $m$ which is divisible by $|G'|$ and $\cX$ with its pushout along $\bmu_n \to \bmu_{m}$, we may find a $G'$-action $\psi$ on $\cX$ by gerbe automorphisms such that the induced $G'$-action on $\Dperf(X)$ via  Construction~\ref{remark-Aut-bmun-act-DXalpha} recovers the $G'$-action on $\Dperf(X)$ obtained from the given $G$-action $\phi$ on $\Dperf(X)$ by extension along $\pi \colon G' \to G$.
    \end{enumerate} 
\end{proposition}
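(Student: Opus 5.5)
We use the diagram~\eqref{Pic-Aut-Aut} of Theorem~\ref{theorem-Aut-mun-gerbe} with $S = \Spec(k)$, together with the identification $\cAut_{\cY} \simeq \Aut_X \ltimes \cPic_X$ for the trivial $\bG_m$-gerbe $\cY$ used in Lemma~\ref{lemma-geometrize-cY}. Since $\cX$ is essentially trivial, its pushout is the trivial $\bG_m$-gerbe $\cY$. The map $\cAut_{\cX} \to \Aut_{\Dperf(X)}$ factors through $\cAut_{\cY}$ by Construction~\ref{remark-Aut-BGm-act-DXalpha}. Hence, by Lemma~\ref{lemma-geometrize-cY}, $\phi_1$ already factors through a homomorphism $G \to \Aut_{\cY}(k) = \Aut_X(k) \ltimes \Pic_X(k)$. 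Its image lies in $\Aut^0_X(k) \times \Pic^0_X(k)$; the product is direct since $\Aut^0_X$ acts trivially on $\Pic^0_X$ by connectedness. The task is therefore to lift a finite subgroup of $\Aut^0_{\cY}(k)$ along $\Aut^0_{\cX}(k) \to \Aut^0_{\cY}(k)$.

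For \eqref{barpsi1-exists}, we compare the two rows of~\eqref{Pic-Aut-Aut}. Theorem~\ref{theorem-Aut-mun-gerbe}\eqref{identity-components} applies because $\omega_X \cong \cO_X$, so the identity components exist and are smooth and proper. By~\eqref{Aut-0-surjective} they surject onto $\Aut^0_X$. For $g \in G$, choose $a \in \Aut^0_{\cX}(k)$ over the $\Aut_X$-component of $g$. The discrepancy between $a$ and $g$ in $\Aut^0_{\cY}(k)$ is then a line bundle class in $\Pic_X(k)$, lying in a divisible part ($\Pic^0_X(k)$, up to the image of $\Aut^0_\cX$). We correct $a$ by an $n$-torsion element of $\Pic^0_X$, after enlarging $n$ so that the needed roots of the discrepancy become $n$-torsion. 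The main obstacle is making these choices into a homomorphism rather than a set-theoretic lift.

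To handle the homomorphism problem, we use a torsion argument. Since $G$ is finite, it is generated by finitely many elements. The relations they satisfy in $\Aut^0_{\cY}$ fail in $\Aut^0_{\cX}$ only up to elements of $\Pic_X[n]$. The preimage of $G$ in $\Aut^0_{\cX}$ is an extension of $G$ by a subgroup of a commutative group, which becomes torsion after pushing out along $\bmu_n \to \bmu_m$. Concretely, $\Aut^0_{\cX}$ is an extension of the abelian variety $\Aut^0_X$ by $\Pic^0_X[n]$, so it is itself an abelian variety, hence commutative and divisible. Its $m$-torsion for suitable $m$ can then be used to split the extension over $G$. Passing to $\bmu_m$, the map $\Pic_X[n] \to \Pic_X[m]$ becomes multiplication-compatible, and the obstruction class in $\rH^2(G, \Pic_X[n])$ dies once $|G|$ divides $m/n$. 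This is the step where enlarging $n$ is essential.

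For \eqref{G-through-cX}, we take the $1$-categorical action $\bar\psi_1$. Each element of $\Aut_{\cX}(k)$ lifts to $\cAut_{\cX}(k)$, which is a $\bmu_n$-gerbe over an algebraically closed field, so $\pi_0$ is the naive group. The obstruction to a $2$-categorical action lies in $\rH^3(G,\bmu_n)$ together with a $\rH^2$-choice. We let $G'$ be the group of pairs consisting of an element of $G$ and an isomorphism class of its lift in the $2$-group $\cAut_{\cX}$. This is an extension of $G$ by $\bmu_n$ which acts strictly, as in \cite[\S3]{bayer-perry}.

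For \eqref{G'-action-lifting-G-action-DX}, the induced $G'$-action on $\Dperf(X)$ agrees with $\phi \circ \pi$ $1$-categorically. The two $\infty$-actions then differ by a class in $\rH^2(G',k^\times)$, which is killed by $|G'|$. Enlarging $n$ to $m$ divisible by $|G'|$ lets us twist $\psi$ by $\bmu_m$-valued cocycles to absorb this class. Finally, \cite[Corollary~3.4]{bayer-perry} upgrades agreement from the $2$-categorical to the $\infty$-categorical level.
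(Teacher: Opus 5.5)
Your proposal has genuine gaps in parts (1) and (2); part (3) is fine.

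\textbf{Part (1): the difficulty is misplaced and the real step is unproved.} The map $\Aut_{\cX}(k)\to\Aut_{\cY}(k)$ is injective. An element of the kernel lies over $1\in\Aut_X(k)$, hence in $\Pic_X[n](k)$, which injects into $\Pic_X(k)$ by \eqref{Pic-Aut-Aut}. So there is no homomorphism problem. Once every $g\in G\subset\Aut^0_{\cY}(k)$ has \emph{some} preimage in $\Aut_{\cX}(k)$, the preimage of $G$ is a subgroup mapping isomorphically onto $G$, and $\bar\psi_1$ is the inverse isomorphism.

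Your third paragraph is therefore unnecessary, and its key claim is false in general anyway. With $\bZ/n\subset\bZ/m\subset\bQ/\bZ$, a class of $\rH^2(G,\bZ/n)$ with nonzero image in $\rH^2(G,\bQ/\bZ)\cong\rH^3(G,\bZ)$ survives in $\rH^2(G,\bZ/m)$ for every $m$.

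What must be shown instead is that the discrepancy $L=\tilde g_{\cY}g^{-1}\in\Pic_X(k)$ between $g$ and a lift $\tilde g\in\Aut^0_{\cX}(k)$ of $g_X$ is torsion of bounded order. Lifts of $g_X$ differ only by $\Pic_X[n]$ (or $\Pic_X[m]$ after pushout). So $L$ can be cancelled only if $L$ itself lies in $\Pic_X[m]$; divisibility of $\Pic^0_X$ and ``roots of the discrepancy'' play no role. The missing argument:
\begin{enumerate}
\item $\tilde g^{|G|}$ lies over $g_X^{|G|}=1$, hence in $\Pic_X[n]$, so $\tilde g^{n|G|}=1$.
\item Since $\Aut^0_{\cY}(k)$ is commutative and $g^{|G|}=1$, we get $L^{n|G|}=1$.
\item After pushing out to $\bmu_m$ with $n|G|\mid m$, we have $L\in\Pic_X[m]\subset\Aut_{\cX'}(k)$, and $\tilde g_{\cX'}L^{-1}$ maps to $g$.
\end{enumerate}

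\textbf{Part (2): your $G'$ is not well defined and would not suffice.} Over algebraically closed $k$, each element of $\Aut_{\cX}(k)$ has a single isomorphism class of lifts to $\cAut_{\cX}(k)$, as you note yourself. So the group of pairs $(g,\text{class of a lift})$ is just $G$.

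If you intend a central extension of $G$ by $\bmu_n$, pulling back along it does not kill an arbitrary obstruction in $\rH^3(G,\bmu_n)$. Already with $\bZ/2$-coefficients, $x_1x_2x_3+x_4x_5x_6\in\rH^3((\bZ/2)^6,\bZ/2)$ survives inflation to every central extension of $(\bZ/2)^6$ by $\bZ/2$. If instead you strictify the $2$-group $G\times_{\Aut_{\cX}(k)}\cAut_{\cX}(k)$, the resulting group of objects is typically infinite (e.g.\ free).

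The missing input is a result producing a \emph{finite} $G'\twoheadrightarrow G$ on which $\ob(\bar\psi_1)$ pulls back to zero. The paper takes this from (the proof of) \cite[Theorem~2.1]{oberdieck-equivariant}. The lifts then form an $\rH^2(G',\bmu_n)$-torsor by \cite[Lemma~3.2]{bayer-perry}.

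\textbf{Part (3)} is correct and matches the paper. Once $|G'|$ divides $m$, the Kummer sequence makes $\rH^2(G',\bmu_m)\to\rH^2(G',k^\times)$ surjective, so the two torsors of lifts can be matched.
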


\begin{proof}
\eqref{barpsi1-exists} Let $\cY \to X$ be the trivial $\bG_m$-gerbe given by the pushout of $\cX \to X$. 
By (the proof of) Lemma~\ref{lemma-geometrize-cY}, 
the inclusion $G \subset \Aut_{X}^0(k) \times \Pic^0_X(k)$ lifts to an injective homomorphism $G \hookrightarrow \Aut_{\cY}^0(k)$. 

Let $g \in G$ and regard it as an element of $\Aut^0_{\cY}(k)$ via this injection, and let $g_X \in \Aut_X(k)$ be its image under $\cAut_{\cY}(k) \to \Aut_X(k)$. 
By Theorem~\ref{theorem-Aut-mun-gerbe}\eqref{Aut-0-surjective}, we may find $\wtilde{g} \in \Aut^0_{\cX}(k)$ whose image $\wtilde{g}_X$ in $\Aut_{X}(k)$ is equal to $g_X$. 
Note that since $g_X$ has order dividing $|G|$, it follows from the top row of~\eqref{Pic-Aut-Aut} that $\wtilde{g}_X$ has order dividing $n + |G|$. 
If $\wtilde{g}_Y$ denotes the image of $\wtilde{g}$ under $\Aut_{\cX}^0(k) \to \Aut_{\cY}^0(k)$, then by construction $\wtilde{g}_Y g^{-1}$ is in the kernel of the map $\Aut_{\cY}(k) \to \Aut_{X}(k)$. 
Moreover, $\Aut^0_{\cY}(k)$ is an abelian variety by Theorem~\ref{theorem-Aut-mun-gerbe}\eqref{identity-components}, so the product $\wtilde{g}_Y g^{-1}$ must have order divisible by $n + |G|$, since its factors do. 
Thus, from the second row of~\eqref{Pic-Aut-Aut}, we find that $\wtilde{g}_Yg^{-1}$ is in the image of the injection $\Pic_{X}(k)[n + |G|](k) \to \Aut_{\cY}(k)$. 
Now choose $m = dn$ to be a multiple of $n$ where $d>1$ is invertible in $k$ (so that $m$ is invertible in $k$). 
Let $\cX'$ be the pushout of $\cX$ along $\bmu_n \to \bmu_{m}$, and let $\wtilde{g}_{\cX'}$ denote the image of $\wtilde{g}$ under $\Aut^0_{\cX}(k) \to \Aut^0_{\cX'}(k)$. 
Then the image of $\wtilde{g}_{\cX'}$ in $\Aut^0_{\cY}(k)$ is still $\wtilde{g}_Y$, so it differs from $g$ by an element $L \in \Pic_{X}(k)[n + |G|](k)$. 
Since $n + |G| \leq dn$ by our assumption that $n$ is a multiple of $|G|$, the element $L$ is contained in $\Aut_{\cX'}(k)$ by the top row of~\eqref{Pic-Aut-Aut} for $\cX'$. 
Thus, we find that $\wtilde{g}_{\cX'} L^{-1}$ maps to $g$ under $\Aut_{\cX}(k) \to \Aut_{\cY}(k)$.

From now on, we replace $\cX$ with $\cX'$. 
Let $\wtilde{G} \subset \Aut_{\cX}(k)$ be the preimage of $G$ along the homomorphism $\Aut_{\cX}(k) \to \Aut_{\cY}(k)$.
This map is surjective by the previous paragraph, and injective by~\eqref{Pic-Aut-Aut}, so it is an isomorphism. 
Thus we have produced a homomorphism $\bar{\psi}_1 \colon G \to \Aut_{\cX}(k)$ fitting into the commutative diagram~\eqref{barpsi1}, as required in~\eqref{barpsi1-exists}. 
    
\eqref{G-through-cX}    Since $\cAut_{\cX} \to \Aut_{\cX}$ is a $\bmu_n$-gerbe, the obstruction to lifting the $1$-categorical action $\bar{\psi}_1$ of $G$ on $\cX$ to a genuine $2$-categorical action (i.e. an action on the stack $\cX$) is an element $\ob(\bar{\psi}_1) \in \rH^3(G, \bmu_n)$, and when this obstruction vanishes the set of lifts is a $\rH^2(G, \bmu_n)$-torsor (see \cite[Lemma~3.2]{bayer-perry}). 
    By (the proof of) \cite[Theorem 2.1]{oberdieck-equivariant}, we may find a finite group $G'$ and a surjection $\pi \colon G' \twoheadrightarrow G$ such that the map $\pi^* \colon \rH^3(G, \bmu_n) \to \rH^3(G', \bmu_n)$ kills $\ob(\bar{\psi}_1)$. 
    But $\pi^*(\ob(\bar{\psi}_1))$ is precisely the obstruction to lifting the homomorphism 
    \begin{equation*}
        \psi_1 \colon G' \xrightarrow{\pi} G \to \Aut_{\cX}(k)
    \end{equation*} 
    to a $2$-categorical action of $G'$ on $\cX$, so such a $G'$-action on $\cX$ exists and the collection of such forms a $\rH^2(G',\bmu_n)$-torsor. 

\eqref{G'-action-lifting-G-action-DX} 
Note that if we replace $\cX$ with its pushout along $\bmu_n \to \bmu_m$ where $m$ is divisible by $n$, then the existence statements in~\eqref{barpsi1-exists} and~\eqref{G-through-cX} is preserved; up to such a replacement, from now on we may thus assume that $n$ is a multiple of $|G'|$. 

Let $\pi^*(\phi)$ denote the $G'$-action on $\Dperf(X)$ given by the extension of $\phi$ along $\pi \colon G' \to G$. 
Its $1$-categorical action is given by 
\begin{equation*}
    \pi^*(\phi_1) = \phi_1 \circ \pi \colon G' \to \Aut_{\Dperf(X)}(k). 
\end{equation*}
The collection of $G'$-actions on $\Dperf(X)$ which lift the $1$-categorical action $\pi^*(\phi_1)$ forms a nonempty $\rH^2(G', k^{\times})$-torsor (see \cite[Corollary~3.4]{bayer-perry}). 
Similarly, consider the collection of $G'$-actions $\psi$ on $\cX$ by gerbe automorphisms which lift the $1$-categorical action $\psi_1$ from~\eqref{psi1}. 
By \eqref{G-through-cX}, they form a nonempty $\rH^2(G', \bmu_n)$-torsor, and given such a $\psi$, the induced action $\psi_{\Dperf(X)}$ on $\Dperf(X)$ via Construction~\ref{remark-Aut-bmun-act-DXalpha} is a lift of the $1$-categorical action $\pi^*(\phi_1)$. 
The induced map of torsors is compatible with the natural map 
\begin{equation*}
    \rH^2(G', \bmu_n) \to \rH^2(G', k^{\times}). 
\end{equation*}
Therefore, in order for a $G'$-action $\psi$ on $\cX$ as in~\eqref{G'-action-lifting-G-action-DX} to exist, 
it suffices for the above map to be surjective. 
This follows from the long exact sequence on cohomology associated to the sequence
    \begin{equation*}
        1 \to \bmu_n \to k^{\times} \xrightarrow{ (-)^n} k^{\times} \to 1, 
    \end{equation*}
    since $n$ annihilates $\rH^{>0}(G', k^{\times})$ by our assumption that it is a multiple of $|G'|$.
\end{proof}

\subsection{Deforming group actions} 

The above results allow us to lift group actions on the derived category to actions on a gerbe. 
Now we want to explain how to make such actions in a family of categories, by deforming such a lift on the special fiber. 
To do so, we will use some results from \cite[\S3.2.3]{bayer-perry}, which show how to deform actions on the derived category up to passing to an \'{e}tale neighborhood. 
We will also need similar results for deforming actions on a $\bmu_n$-gerbe. 

A ring $A$ is called a \emph{Grothendieck ring} if it is noetherian and for every $\fp \in \Spec A$, the completion $A_{\fp} \to \widehat{A_{\fp}}$ of the local ring at $\fp$ is a regular map of rings \cite[\href{https://stacks.math.columbia.edu/tag/07GG}{Tag 07GG}]{stacks-project}. 
A scheme $S$ is called a \emph{Grothendieck scheme} if for every open affine $U \subset S$, the ring $\cO_S(U)$ is Grothendieck; this is a mild condition, which includes all excellent schemes. 

\begin{proposition}
\label{proposition-deform-group-actions}
    Let $S$ be a Grothendieck scheme. 
    Let $X \to S$ be a smooth proper morphism of schemes with geometrically integral fibers. 
    Let $\cX \to X$ be a $\bmu_n$-gerbe with associated Brauer class $\alpha \in \Br(X)$. 
    Fix a point $s \in S$. 
    \begin{enumerate}
        \item \label{deform-G-DX} Let $G$ be a finite group whose order is invertible in $\kappa(s)$, and let 
        \begin{equation*}
         \phi_1 \colon G \to \pi_0(\cAut_{\Dperf(X, \alpha)/S}(S))    
        \end{equation*}
        be a $1$-categorical action of $G$ on $\Dperf(X,\alpha)$. 
        Assume that the obstruction class $\ob((\phi_1)_s) \in \rH^3(G, \kappa(s)^{\times})$ to the existence of an $\infty$-categorical lift of the $1$-categorical action $(\phi_1)_s$ on $\Dperf(X_s, \alpha_s)$ vanishes. 
        Then there exists an \'{e}tale neighborhood $U \to S$ of $s$ such that the obstruction $\ob((\phi_1)_U) \in \rH^3(G, \bG_m(U))$ vanishes, and thus the set of $\infty$-categorical lifts of $(\phi_1)_U$ is a nonempty $\rH^2(G, \bG_m(U))$-torsor. 
        
        \item \label{deform-G-cX} Assume that $n$ is invertible on $S$, let $G'$ be a finite group, and let 
        \begin{equation*}
        \psi_1 \colon G' \to \pi_0(\cAut_{\cX/S}(S))
        \end{equation*}
        be a $1$-categorical action of $G'$ on $\cX$ by gerbe automorphisms. 
        Assume that the obstruction class $\ob((\psi_1)_s) \in \rH^3(G', \bmu_n(\kappa(s)))$ to the existence of a $2$-categorical lift of the $1$-categorical action $(\psi_1)_s$ on the fiber $\cX_s$ vanishes. 
        Then there exists an \'{e}tale neighborhood $U \to S$ of $s$ such that the obstruction $\ob((\psi_1)_{U}) \in \rH^3(G', \bmu_n(U))$ vanishes, and thus the set of $2$-categorial lifts of $(\psi_1)_U$ is a nonempty $\rH^2(G', \bmu_n(U))$-torsor. 
        
        \item \label{deform-G-cX-DX} 
        Suppose the assumptions in parts~\eqref{deform-G-DX} and~\eqref{deform-G-cX} above both hold and  that the order of $G'$ is invertible in $\kappa(s)$. 
        Moreover, assume that $\phi_s$ is a given $\infty$-categorical $G$-action on $\Dperf(X_s, \alpha_s)$ which lifts the $1$-categorical action 
        \begin{equation*}
        (\phi_1)_s \colon G \xrightarrow{\, \phi_1 \,} \pi_0(\cAut_{\Dperf(X, \alpha)/S}(S)) \to \pi_0(\cAut_{\Dperf(X_s, \alpha_s)/\kappa(s)}(\kappa(s))),
        \end{equation*} 
        that $\psi_s$ is a $2$-categorical $G'$-action on $\cX_s$ by gerbe automorphisms which lifts 
        \begin{equation*}
        (\psi_1)_s \colon G' \xrightarrow{\, \psi_1 \,} \pi_0(\cAut_{\cX/S}(S)) \to \pi_0(\cAut_{\cX_s/\kappa(s)}(\kappa(s))) , 
        \end{equation*} 
        and that $\pi \colon G' \to G$ is a surjection such that the $G'$-action on $\Dperf(X_s, \alpha_s)$ induced by $\psi_s$ via Construction~\ref{remark-Aut-bmun-act-DXalpha} recovers the extension of $\phi_s$ along $\pi \colon G' \to G$. 
	Finally, assume that $\phi_1$ and $\psi_1$ are compatible in the sense that the diagram 
        \begin{equation}
        \label{G'-actions-compatible}
        \begin{tikzcd}
        & & \pi_0(\cAut_{\cX/S}(S)) \ar[d]  \\ 
        G' \ar{r}[swap]{\pi} \ar{urr}{{\psi}_1} & G \ar{r}[swap]{\phi_1}   & \pi_0(\cAut_{\Dperf(X,\alpha)/S}(S)) ,
        \end{tikzcd}
    \end{equation}
    commutes, where the vertical arrow is given by Construction~\ref{remark-Aut-bmun-act-DXalpha}. 
        Then we may find an \'{e}tale neighborhood $(U, u) \to (S,s)$ of $s$, an $\infty$-categorical action $\phi_U$ of $G$ on $\Dperf(X_U, \alpha_U)$, and a $2$-categorical action $\psi_U$ of $G'$ on $\cX_U$ by gerbe automorphisms, such that: 
        \begin{enumerate}
            \item \label{phis-deform}
            The restriction $\phi_u$ of $\phi_U$ to the fiber $\Dperf(X_u, \alpha_u)$ coincides with (the base change to $u$ of)~$\phi_s$. 
            \item \label{psis-deform}
            The restriction $\psi_u$ of $\psi_U$ to the fiber $\cX_u$ coincides with (the base change to $u$ of)~$\psi_s$. 
            \item \label{psiU-phiU-compatible} The $G'$-action on $\Dperf(X_U, \alpha_U)$ induced by $\psi_U$ via Construction~\ref{remark-Aut-bmun-act-DXalpha} recovers the extension of $\phi_U$ along $\pi \colon G' \to G$. 
        \end{enumerate}
    \end{enumerate}
\end{proposition}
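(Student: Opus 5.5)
The plan is to reduce all three parts to one principle: group cohomology with coefficients in the sections of a smooth (or finite étale) commutative group scheme over a Grothendieck base can be ``deformed'' from a point to an étale neighborhood. This is the approach of \cite[\S3.2.3]{bayer-perry}, and I would follow it closely.

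For part~\eqref{deform-G-DX}, the obstruction $\ob(\phi_1)$ to an $\infty$-categorical lift is computed as in \cite[\S3]{bayer-perry}. Because $\cAut_{\Dperf(X,\alpha)/S}$ is a $\bG_m$-gerbe over $\Aut_{\Dperf(X,\alpha)/S}$ (by \cite[Proposition~8.2]{PI-abelian3folds}), this obstruction lives in $\rH^3(G, \bG_m(S))$ and is compatible with base change. I would then consider the henselization $\cO^h_{S,s}$, which is the filtered colimit of $\cO(U)$ over étale neighborhoods $(U,u)$. Since $G$ is finite, $\rH^3(G,-)$ commutes with filtered colimits, so it suffices to show the obstruction vanishes in $\rH^3(G, (\cO^h_{S,s})^\times)$.

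To do this, use the exact sequence $1 \to 1+\fm^h \to (\cO^h)^\times \to \kappa(s)^\times \to 1$. If $|G|$ is invertible in $\kappa(s)$, then for each $k$, raising to the power $k$ is bijective on $1+\fm^h$. For $k=|G|$ this follows from Hensel's lemma, and it is here that the henselian property is used. Consequently $\rH^{>0}(G,1+\fm^h)=0$, so $\rH^3(G,(\cO^h)^\times) \hookrightarrow \rH^3(G,\kappa(s)^\times)$, and the obstruction vanishes because its image $\ob((\phi_1)_s)$ does. The torsor statement then follows from \cite[Lemma~3.2]{bayer-perry}.

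Part~\eqref{deform-G-cX} is easier. Here $\cAut_{\cX/S}$ is a $\bmu_n$-gerbe by Theorem~\ref{theorem-Aut-mun-gerbe}, and $\bmu_n$ is finite étale because $n$ is invertible. Hence $\bmu_n(\cO^h_{S,s}) = \bmu_n(\kappa(s)^{\mathrm{sep}})^{\mathrm{Gal}}$ after a finite étale extension, so up to shrinking $U$ we get $\bmu_n(U) \cong \bmu_n(\kappa(u))$. The obstruction then maps isomorphically to $\ob((\psi_1)_u)$, which vanishes. The Grothendieck hypothesis enters when identifying the henselization's automorphism data with a colimit over étale neighborhoods. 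More precisely, it is needed to pass from formal or henselian lifts to étale-local ones, via Artin approximation, wherever \cite{bayer-perry} uses it.

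For part~\eqref{deform-G-cX-DX}, I would first choose $\psi_U$ lifting $(\psi_1)_U$ using~\eqref{deform-G-cX}. The lifts form an $\rH^2(G',\bmu_n(U))$-torsor, and restriction to $u$ is a bijection on these torsors for small $U$, since $\bmu_n(U)\cong\bmu_n(\kappa(u))$. So I can pick $\psi_U$ with $\psi_u=\psi_s$, which gives~\eqref{psis-deform}. Similarly, the lifts $\phi_U$ form an $\rH^2(G,\bG_m(U))$-torsor, and by the henselian argument above, $\rH^2(G,(\cO^h)^\times) \to \rH^2(G,\kappa(s)^\times)$ is an isomorphism. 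So after shrinking $U$, I can choose $\phi_U$ with $\phi_u=\phi_s$, which gives~\eqref{phis-deform}.

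It remains to prove~\eqref{psiU-phiU-compatible}. Both the $G'$-action induced by $\psi_U$ and $\pi^*\phi_U$ lift the same $1$-categorical action, by the commutativity of~\eqref{G'-actions-compatible}. They therefore differ by a class in $\rH^2(G',\bG_m(U))$, and this class restricts to zero at $u$ by hypothesis. Since $|G'|$ is invertible in $\kappa(s)$, the same henselian injectivity applied to $G'$ shows the class dies on a smaller étale neighborhood, and then the two actions agree.

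I expect the main obstacle to be the careful identification of the obstruction and torsor classes with cohomology of $\Gamma(U,\bG_m)$ in a way that is compatible with base change. A related difficulty is ensuring that the colimit over étale neighborhoods really computes the henselian cohomology. This requires that all relevant data, including the actions themselves, be of finite presentation, so that they descend from $\cO^h$ to some $U$.
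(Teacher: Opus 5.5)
Your proposal is correct, but it reaches the key cohomological input by a different route from the paper. The paper follows \cite[Proposition~3.9, Lemmas~3.10--3.11]{bayer-perry} and works with the completion. It uses the Grothendieck hypothesis and Popescu's theorem to write $\widehat{A_{\fp}}$ as a filtered colimit of smooth $A$-algebras $A_i$. It then proves that $\rH^d(G,\widehat{A_{\fp}}^{\times}) \to \rH^d(G,\kappa(\fp)^{\times})$ and its $\bmu_n$-analogue are isomorphisms for $d \geq 1$, by passing through artinian quotients. All the adjustments of lifts are made over some $A_i$. Finally it returns to an \'{e}tale neighborhood by taking an \'{e}tale-local section of the smooth map $\Spec A_i \to \Spec A$.

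You instead use the henselization $\cO^h_{S,s}$, which is by definition the filtered colimit of $\cO(U)$ over \'{e}tale neighborhoods, and get the same isomorphisms directly from Hensel's lemma. The group $1+\fm^h$ is uniquely $|G|$-divisible, so $\rH^{\geq 1}(G,1+\fm^h)=0$. Likewise $\bmu_n(\cO^h_{S,s}) = \bmu_n(\kappa(s))$. Since $\rH^d(G,-)$ commutes with filtered colimits for finite $G$, vanishing or surjectivity over $\cO^h_{S,s}$ descends to some $U$. The rest of your part (3) is the same bookkeeping as in the paper: pin down $\psi_U$ and $\phi_U$ on the fiber using surjectivity in degree $2$, then kill the difference class in $\rH^2(G',\bG_m(U))$ using injectivity. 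Your route is more elementary and shows that the Grothendieck hypothesis is unnecessary. The paper's route simply reuses the existing machinery of \cite{bayer-perry}.

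Some of your side remarks are inaccurate, though harmless:
\begin{itemize}
\item In your argument the Grothendieck hypothesis is used nowhere, and no Artin approximation is needed. In the paper, that hypothesis is exactly what makes Popescu's theorem applicable.
\item You never descend actions from $\cO^h_{S,s}$ to $U$. Only cohomology classes pass through the colimit, and lifts over $U$ come from the torsor statement once the obstruction vanishes over $U$. So the finite-presentation ``obstacle'' you anticipate does not arise.
\item ``For each $k$'' should read ``for each $k$ invertible in $\kappa(s)$''.
\item Injectivity of $\bmu_n(U) \to \bmu_n(\kappa(u))$ requires $U$ to be connected. Alternatively, argue through the colimit as you do in part (1).
\end{itemize}
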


\begin{proof}
    The claim~\eqref{deform-G-DX} is precisely \cite[Proposition 3.9]{bayer-perry}. 
    
    For~\eqref{deform-G-cX}, we can argue similarly. 
    Namely, the statement reduces to proving the following $\bmu_n$-analog of \cite[Lemma 3.10]{bayer-perry}: Suppose that $A$ is a Grothendieck ring on which $n$ is invertible, $\alpha \in \rH^d(G', \bmu_n(A))$ is a degree $d \geq 1$ cohomology class (where $G'$ acts trivially on $\bmu_n(A)$), and $\fp \in \Spec A$ is a point such that the map $\rH^d(G', \bmu_n(A)) \to \rH^d(G', \bmu_n(\kappa(\fp)))$ kills $\alpha$. 
    Then we claim there exists an affine \'{e}tale neighborhood $\Spec B \to \Spec A$ of $\fp$ such that the map $\rH^d(G', \bmu_n(A)) \to \rH^d(G', \bmu_n(B))$ kills $\alpha$. 
    As in the proof of \cite[Lemma~3.10]{bayer-perry}, this in turn reduces to proving 
    if $A$ is a complete local ring with residue field $\kappa$, then for $d \geq 1$ the map $\rH^d(G', \bmu_n(A)) \to \rH^d(G', \bmu_n(\kappa))$ is an isomorphism. 
    The same proof as in \cite[Lemma~3.11]{bayer-perry} works, using that for a surjection $A \to B$ of artinian local rings with the same residue field, the map $\bmu_n(A) \to \bmu_n(B)$ is an isomorphism because $n$ is invertible.

    Finally, we prove~\eqref{deform-G-cX-DX}. We may assume $S = \Spec A$ is affine and $s = \fp \in \Spec A$. 
    Because $A$ is a Grothendieck ring, the composition $A \to A_{\fp} \to \widehat{A_{\fp}}$ is a regular map of noetherian rings, so by Popescu's theorem \cite[\href{https://stacks.math.columbia.edu/tag/07GC}{Tag 07GC}]{stacks-project} we may write $\widehat{A_{\fp}} = \colim A_i$ as a filtered colimit of smooth ring maps $A \to A_i$. 
    Let $d \geq 1$ be an integer and consider the commutative diagram 
        \begin{equation*}
        \begin{tikzcd}
        \rH^d(G', \bmu_n(A_i)) \ar{d} \ar{r} & \rH^d(G', \bmu_n(\kappa(\fp))) \ar{d} \\ 
        \rH^d(G', \bG_m(A_i)) \ar{r} & \rH^d(G', \kappa(\fp)^{\times}) \\ 
        \rH^d(G, \bG_m(A_i)) \ar{r} \ar{u} & \rH^d(G, \kappa(\fp)^{\times}) \ar{u}. 
        \end{tikzcd}
        \end{equation*}
The colimit of the first column over $i$ is isomorphic to the second column. 
Indeed, note that $\colim \rH^d(G', \bmu_n(A_i)) = \rH^d(G', \bmu_n(\widehat{A_{\fp}}))$ as $\colim \bmu_n(A_i) = \bmu_n(\widehat{A_{\fp}})$. On the other hand, $\rH^d(G', \bmu_n(\widehat{A_{\fp}})) \cong \rH^d(G', \bmu_n(\kappa(\fp))$ by the previous paragraph. 
This proves our claim about the colimit for the first row. 
For the second and third rows, the claim follows from the same argument for $\bG_m$ in place of $\bmu_n$, by invoking~\cite[Lemma~3.11]{bayer-perry}. 

For $d = 3$, the top horizontal arrow sends the obstruction $\ob((\psi_1)_{A_i})$ to the existence of a $2$-categorical lift of $(\psi_1)_{A_i}$ to the obstruction $\ob((\psi_1)_{\fp})$, which vanishes by assumption. 
Therefore, we may choose $i$ so that $\ob((\psi_1)_{A_i}) = 0$, and thus there exists a $2$-categorical lift $\psi_{A_i}$ of $(\psi_1)_{A_i}$. 
Similarly, we may also assume that there exists an $\infty$-categorical lift $\phi_{A_i}$ of $(\phi_1)_{A_i}$. 
Further, we claim that we may arrange that the fiber $(\psi_{A_i})_{\fp}$ over $\fp$ is the given lift $\psi_{\fp}$. 
Indeed, consider the difference $(\psi_{A_i})_{\fp} - \psi_{\fp} \in \rH^2(G', \bmu_n(\kappa(\fp)))$. 
By the previous paragraph, up to enlarging $i$, this class is in the image of the top horizontal arrow for $d = 2$; hence we can modify $\psi_{A_i}$ by a lift of this difference along the arrow to ensure that $(\psi_{A_i})_{\fp} = \psi_{\fp}$. 
Similarly, we may also assume that the fiber $(\phi_{A_i})_{\fp}$ over $\fp$ is the given lift $\phi_{\fp}$ of $(\phi_1)_{\fp}$. 
Now let $(\psi_{A_i})_{\Dperf(X_{A_i}, \alpha_{A_i})}$ denote the induced $G'$-action on $\Dperf(X_{A_i}, \alpha_{A_i})$ via Construction~\ref{remark-Aut-bmun-act-DXalpha}, and let $\pi^*(\phi_{A_i})$ be the extension of $\phi_{A_i}$ along $\pi \colon G' \to G$. 
Note that both of these $G'$-actions have the same underlying $1$-categorical action, by the commutativity of the diagram~\eqref{G'-actions-compatible}.
Hence we may consider the difference 
\begin{equation*}
    (\psi_{A_i})_{\Dperf(X_{A_i}, \alpha_{A_i})} - \pi^*(\phi_{A_i}) \in \rH^2(G', \bG_m(A_i)),  
\end{equation*}
which maps to zero under the middle horizontal arrow in the above diagram for $d = 2$. 
Hence, up to enlarging $i$, we may assume that the difference itself is zero, i.e. $(\psi_{A_i})_{\Dperf(X_{A_i}, \alpha_{A_i})}$ coincides with $\pi^*(\phi_{A_i})$. 

The map $\Spec A_i \to \Spec A$ is smooth and its image contains $\fp$.
Hence we can find an \'{e}tale neighborhood $\Spec(B) \to \Spec(A)$ of $\fp$ over which $\Spec(A_i) \to \Spec(A)$ has a section, i.e. such that the map $A \to B$ factors through $A \to A_i$. 
Base changing the data constructed in the previous paragraph to $U = \Spec(B)$ gives $\psi_U$ and $\phi_U$ which satisfy~\eqref{phis-deform},~\eqref{psis-deform}, and~\eqref{psiU-phiU-compatible}. 
\end{proof}

The above essentially reduces the problem of constructing compatible actions on derived categories and gerbes in families to the $1$-categorical case, plus an enhanced construction on the special fiber. 
These problems can both be solved under a Calabi--Yau hypothesis. 

\begin{corollary}
\label{corollary-lifting-actions-in-CY-family}
Let $S$ is a variety over an algebraically closed field $k$ of characteristic $0$. 
Let $X \to S$ be a smooth proper morphism with geometrically integral fibers such  that $\omega_{X/S}$ is the pullback of a line bundle on $S$. 
Let $0 \in S(k)$ be a point. 
Let $G \subset \Aut^0_{X_0}(k) \times \Pic^0_{X_0}(k)$ be a finite group which acts on $\Dperf(X_0)$ (through automorphisms and tensoring by line bundles). 
Let $\cX \to X$ be a $\bmu_n$-gerbe whose base change $\cX_0 \to X_0$ over $0 \in S(k)$ is an essentially trivial $\bmu_n$-gerbe. 
Then up to replacing $\cX$ with its pushout along $\bmu_n \to \bmu_m$ for a suitable integer $m$ divisible by $n$ and up to base change to an \'{e}tale neighborhood of $0 \in S(k)$, there exists: 
\begin{enumerate}
    \item \label{G-action-deform-DX0}
    A $G$-action on the $S$-linear category $\Dperf(X,\alpha)$ whose base change to $0 \in S(k)$ recovers the given $G$-action on $\Dperf(X_0, \alpha_0) = \Dperf(X_0)$. 
    \item A finite group $G'$, a surjection $\pi \colon G' \twoheadrightarrow G$, and a $G'$-action on $\cX$ by gerbe automorphisms such that: 
    \begin{enumerate}
        \item the induced $G'$-action on $\Dperf(X,\alpha)$ via Construction~\ref{remark-Aut-bmun-act-DXalpha} recovers the $G'$-action on $\Dperf(X, \alpha)$ obtained from the $G$-action in~\eqref{G-action-deform-DX0} by extension along the homomorphism $\pi \colon G' \to G$; and 
    \item the induced homomorphism $G' \to \Aut_{X/S}(S)$ has image contained in the subgroup $\Aut_{X/S}^0[n](S)$. 
    \end{enumerate}
\end{enumerate}
\end{corollary}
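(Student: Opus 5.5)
The plan is to first produce compatible data on the special fiber using Proposition~\ref{proposition-lifting-action-D-to-cX}, and then spread it out over an \'{e}tale neighborhood using Proposition~\ref{proposition-deform-group-actions}\eqref{deform-G-cX-DX}. The only genuinely new input needed is to extend the $1$-categorical actions from the fiber $X_0$ to the family.

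\textbf{Special fiber.} Apply Proposition~\ref{proposition-lifting-action-D-to-cX} to $X_0$ and the essentially trivial gerbe $\cX_0$. Its hypotheses hold: $\omega_{X_0} \cong \cO_{X_0}$, since $\omega_{X/S}$ is pulled back from $S$, and $k$ has characteristic $0$. After first enlarging $n$ to a multiple of $|G|$, this gives:
\begin{itemize}
\item a homomorphism $\bar\psi_{1,0} \colon G \to \Aut_{\cX_0}(k)$ lifting $\phi_{1,0}$;
\item a finite group $G'$ with a surjection $\pi \colon G' \twoheadrightarrow G$;
\item a $2$-categorical action $\psi_0$ of $G'$ on $\cX_0$, where, after replacing $\cX$ by a pushout along $\bmu_n \to \bmu_m$, the induced action on $\Dperf(X_0)$ is $\pi^*\phi_0$.
\end{itemize}
All pushouts can be performed on $\cX$ over $S$ itself, so they are compatible with base change to $0$. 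We also arrange that $n$ is divisible by $|G'|$.

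For later use, note what happens in $\Aut_{X_0}$. The image of $g\in G$ lies in $\Aut^0_{X_0}$ and has order dividing $|G|$. Hence the image of $G'$ in $\Aut_{X_0}(k)$ lies in $\Aut^0_{X_0}[n](k)$.

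\textbf{Spreading the $1$-categorical actions.} This is the main obstacle: we need $1$-categorical actions $\phi_1 \colon G \to \pi_0(\cAut_{\Dperf(X,\alpha)/S}(S))$ and $\psi_1 \colon G' \to \pi_0(\cAut_{\cX/S}(S))$ that restrict to the given ones at $0$ and make \eqref{G'-actions-compatible} commute.

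I would use Theorem~\ref{theorem-Aut-mun-gerbe}\eqref{identity-components}. After shrinking $S$ so that it is reduced, $\Aut^0_{\cX/S}$ is smooth and proper over $S$ with connected fibers, and hence a family of abelian varieties. The reason is that the fibers $\Aut^0_{\cX_s}$ are extensions of an abelian variety by a finite group, and the connected ones are abelian varieties; this is the same argument as the one used in the proof of Proposition~\ref{proposition-lifting-action-D-to-cX}.

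The torsion subgroup $\Aut^0_{\cX/S}[N]$, for $N$ invertible, is then finite \'{e}tale over $S$. So after an \'{e}tale base change $U \to S$, every $N$-torsion point of the fiber at $0$ extends uniquely to a section over $U$. Taking $N$ to be the exponent of $\bar\psi_{1,0}(G)$, the homomorphism $\bar\psi_{1,0}$ extends to a homomorphism $G \to \Aut_{\cX_U/U}(U)$. The homomorphism property survives because $\Aut^0_{\cX/S}[N]$ is finite \'{e}tale, so relations that hold at $0$ hold on the connected component of $U$ through $u$.

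Next, Remark~\ref{remark-variant-1-categorical-action}, applied after a further \'{e}tale localization, upgrades these homomorphisms to $1$-categorical actions. Composing with $\pi$ gives $\psi_1$. Composing with the morphism of Construction~\ref{remark-Aut-bmun-act-DXalpha} gives $\phi_1$, so the diagram \eqref{G'-actions-compatible} commutes by construction. One subtlety is that the lift to $\pi_0$ at $0$ must agree with the given $\phi_{1,0}$. Over an algebraically closed field this is automatic, by Remark~\ref{remark-variant-1-categorical-action}.

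The image of $G'$ in $\Aut_{X/S}(U)$ lands in $\Aut^0_{X/S}$, because it is connected to the identity in each fiber. It is $n$-torsion because it is so at $0$ and $\Aut^0_{X/S}[n]$ is \'{e}tale. This gives condition (b).

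\textbf{Conclusion.} The obstructions at $0$ vanish, because the actions $\phi_0$ and $\psi_0$ exist. Moreover $|G|$, $|G'|$ and $n$ are all invertible in characteristic $0$, and a variety is excellent, hence a Grothendieck scheme. So Proposition~\ref{proposition-deform-group-actions}\eqref{deform-G-cX-DX} applies. It gives, after a further \'{e}tale neighborhood, an action $\phi_U$ deforming $\phi_0$, which is \eqref{G-action-deform-DX0}. It also gives an action $\psi_U$ whose induced action on $\Dperf(X_U,\alpha_U)$ is $\pi^*\phi_U$, which is (a).
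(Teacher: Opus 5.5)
There is a genuine gap in the spreading step. You extend $\bar\psi_{1,0}$ using the finite \'etale group $\Aut^0_{\cX/S}[N]$, which tacitly assumes $\bar\psi_{1,0}(G)\subset \Aut^0_{\cX_0}(k)$. This is false in general. Only the image of $G$ in $\Aut_{X_0}$ is known to lie in the identity component. An element of $G$ with a nontrivial line bundle component is sent into a coset of the finite kernel $\Pic_{X_0}[n]$ of $\Aut_{\cX_0}\to\Aut_{X_0}$, and this kernel need not lie in $\Aut^0_{\cX_0}$. Compare the correction term $L^{-1}$ in the proof of Proposition~\ref{proposition-lifting-action-D-to-cX}\eqref{barpsi1-exists}.

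Here is a concrete counterexample. Let $X_0$ be an abelian variety and $\cX_0=X_0\times B\bmu_n$ the trivial gerbe, which is allowed by the hypotheses. Then $\Aut_{\cX_0}\cong \Aut_{X_0}\ltimes \Pic_{X_0}[n]$ with $\Pic_{X_0}[n]$ finite \'etale, so $\Aut^0_{\cX_0}=\Aut^0_{X_0}$. Now take $g=(0,L)\in G$ with $L$ a nontrivial torsion line bundle. The diagram in Proposition~\ref{proposition-lifting-action-D-to-cX}\eqref{barpsi1-exists}, together with injectivity of $\Aut_{\cX_0}\to\Aut_{\cY_0}$, forces $\bar\psi_{1,0}(g)$ to be the element $L\in\Pic_{X_0}[n]$. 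This element lies off the identity component. This is exactly the situation needed for Theorem~\ref{theorem-semiregularity-equivariant-intro}, where $G\subset X_0\times X_0^\vee$ may have nontrivial line bundle components. Your justification of (b), ``connected to the identity in each fiber,'' inherits the same problem.

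The repair, which is what the paper does, is to replace $\Aut^0_{\cX/S}$ by the preimage $\Gamma\subset\Aut_{\cX/S}$ of $\Aut^0_{X/S}[n]$.
\begin{itemize}
\item By Theorem~\ref{theorem-Aut-mun-gerbe}\eqref{Aut-ses} and~\eqref{Aut-0-surjective}, $\Gamma$ sits in an exact sequence $1\to\Pic_{X/S}[n]\to\Gamma\to\Aut^0_{X/S}[n]\to 1$.
\item $\Aut^0_{X/S}[n]$ is finite \'etale, being torsion in an abelian scheme.
\item $\Pic_{X/S}[n]$ is finite \'etale, being an extension of $\NS_{X/S}[n]$ by $\Pic^0_{X/S}[n]$. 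Hence $\Gamma$ is finite \'etale.
\item The image of $G$ in $\Aut_{X_0}$ lies in $\Aut^0_{X_0}[n]$, so $\bar\psi_{1,0}(G)\subset\Gamma(0)$.
\end{itemize}
Your extension argument then goes through with $\Gamma$ in place of $\Aut^0_{\cX/S}[N]$. Condition (b) becomes automatic, because $\psi_1$ factors through $\Gamma$.

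With this fix, your choice to define $\phi_1$ as the composite of the extended $\bar\psi_1$ with the map of Construction~\ref{remark-Aut-bmun-act-DXalpha} is legitimate. It is slightly cleaner than the paper's route, which extends $\phi_1$ separately through the finite \'etale group $\Aut^0_{\Dperf(X,\alpha)/S}[n]$ and then matches the two extensions.
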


\begin{proof}
    Let $\phi_0$ denote the given $G$-action on
    $\Dperf(X_0)$.
    By Proposition~\ref{proposition-lifting-action-D-to-cX}, up to possibly replacing $n$ with a multiple (and $\cX$ with its pushout), we may find a finite group $G'$, a surjection $\pi \colon G' \twoheadrightarrow G$, and a $G'$-action $\psi_0$ on $\cX_0$ by gerbe automorphisms such that the induced $G'$-action on $\Dperf(X_0)$ via Construction~\ref{remark-Aut-bmun-act-DXalpha} recovers the $G'$-action on $\Dperf(X_0)$ obtained from the $G$-action $\phi_0$ by extension along $\pi \colon G' \to G$. 

    Note that by Theorem~\ref{theorem-Aut-mun-gerbe}\eqref{identity-components}, the identity component $\Aut_{X/S}^0$ exists and is a smooth proper abelian scheme over $S$. 
    Its $n$-torsion $\Aut^0_{X/S}[n]$ is thus a finite \'{e}tale group scheme over $S$. 
    Consider the preimage $\Gamma \subset \Aut_{\cX/S}$ of $\Aut^0_{X/S}[n]$ under the morphism $\Aut_{\cX/S} \to \Aut_{X/S}$. 
     By parts~\eqref{Aut-ses} and \eqref{Aut-0-surjective} of Theorem~\ref{theorem-Aut-mun-gerbe}, $\Gamma$ fits into an exact sequence 
     \begin{equation*}
         1 \to \Pic_{X/S}[n] \to \Gamma \to \Aut^0_{X/S}[n] \to 1. 
     \end{equation*}
    Note that $\Pic_{X/S}[n]$ is also a finite \'{e}tale group scheme, because it can be written as an extension of such, 
    \begin{equation*}
        0 \to \Pic^0_{X/S}[n] \to \Pic_{X/S}[n] \to \NS_{X/S}[n] \to 0, 
    \end{equation*}
    where $\NS_{X/S}$ is the relative N\'{e}ron--Severi group. 
     Altogether, we find that $\Gamma$ is a finite \'{e}tale group scheme over $S$. 
    
    On the other hand, consider the group algebraic space $\Aut_{\Dperf(X,\alpha)/S}$ of autoequivalences of $\Dperf(X,\alpha)$ over $S$. 
    By \cite[Lemma~8.14]{PI-abelian3folds}, its identity component $\Aut_{\Dperf(X,\alpha)/S}^0$ exists, and is a smooth proper abelian scheme over $S$. Hence its $n$-torsion points $\Aut_{\Dperf(X,\alpha)/S}^0[n]$ is also a finite \'{e}tale group scheme over $S$. 

    Let $(\phi_0)_1$ and $(\psi_0)_1$ be the $1$-categorical actions underlying the actions $\phi_0$ and $\psi_0$. 
    By construction, they fit into a commutative diagram \begin{equation*}
        \begin{tikzcd}
        & & \Aut_{\cX_0}(k) \ar[d]  \\ 
        G' \ar{r}[swap]{\pi} \ar{urr}{({\psi_0})_1} & G \ar{r}[swap]{(\phi_0)_1}   & \Aut_{\Dperf(X_0,\alpha_0)}(k) . 
        \end{tikzcd}
    \end{equation*}
    Note that $\Aut_{\cX_0}(k) = \Aut_{\cX/S}(0)$ and $\Aut_{\Dperf(X_0,\alpha_0)}(k) = \Aut_{\Dperf(X, \alpha)/S}(0)$. Moreover, by construction $(\psi_0)_1$ factors through $\Gamma(0) \subset \Aut_{\cX/S}(0)$. 
    Therefore, since as explained above $\Gamma$ and $\Aut_{\Dperf(X,\alpha)/S}^0[n]$ are finite \'{e}tale group schemes, up to passing to an \'{e}tale neighborhood of $0 \in S(k)$, we may assume that we have a commutative diagram 
        \begin{equation*}
        \begin{tikzcd}
        & & \Aut_{\cX/S}(S) \ar[d]  \\ 
        G' \ar{r}[swap]{\pi} \ar{urr}{{\psi}_1} & G \ar{r}[swap]{\phi_1}   & \Aut_{\Dperf(X,\alpha)/S}(S) ,
        \end{tikzcd}
    \end{equation*}
    which recovers the above diagram over $0 \in S(k)$ and where $\psi_1$ factors through $\Gamma(S)$. 
    By Remark~\ref{remark-variant-1-categorical-action}, up to passing to a further \'{e}tale neighborhood of $0 \in S(k)$, we may assume that we in fact have a commutative diagram as in~\eqref{G'-actions-compatible}. Then we may apply Proposition~\ref{proposition-deform-group-actions}\eqref{deform-G-cX-DX} to conclude the proof. 
\end{proof}

\subsection{Geometricity of invariant categories}

To understand invariant categories of twisted derived categories, the results of the previous sections reduce us in good situations to understanding derived categories of quotients of $\bmu_n$-gerbes. Under appropriate hypotheses, the latter is in fact equivalent to a twisted derived category. 

\begin{lemma}
\label{lemma-cX-mod-G-geometric}
    Let $X \to S$ be a smooth proper morphism of schemes with geometrically integral fibers. 
    Let $\cX \to X$ be a $\bmu_n$-gerbe. 
    Let $G$ be a finite group of order invertible on $S$ which acts on $\cX$ by gerbe automorphisms. 
    Write $G$ as an extension 
    \begin{equation}
     \label{KGH}
        1 \to K \to G \to H \to 1 
    \end{equation}
    where $H$ is the image of $G$ in $\Aut_{X/S}(S)$. 
    Assume that the induced $H$-action on $X$ is free. 
    Then there exists a smooth proper morphism $M \to S$ with a surjective finite \'{e}tale morphism $M \to X/H$ over $S$ and a Brauer class $\beta \in \Br(M)$ such that 
    there is an $X/H$-linear equivalence 
    \begin{equation*}
    \Dperf(\cX/G) \simeq \Dperf(M, \beta).
    \end{equation*}
\end{lemma}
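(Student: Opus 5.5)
The plan is to show that $\cX/G$ is a gerbe over $X/H$ with finite, tame stabilizers. Its derived category should then be described by a sheaf of separable algebras whose center is finite \'{e}tale over $X/H$; we take $M$ to be the spectrum of that center.

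\emph{Step 1: the quotient $\cX/G$ is a tame gerbe over $X/H$.} The composite $\cX \to X$ is $G$-equivariant for the action of $G$ on $X$ through $H$. Since $H$ acts freely on the smooth proper $S$-scheme $X$, the quotient $X/H$ is a smooth proper algebraic space over $S$ and $X \to X/H$ is finite \'{e}tale. We therefore get a morphism $p \colon \cZ \coloneqq \cX/G \to X/H$.

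Fix a geometric point $x$ of $\cX$. Because $H$ acts freely, its stabilizer in $G$ is exactly $K$. Because $G$ acts by gerbe automorphisms, the automorphism group $\Gamma_x$ of the image point in $\cZ$ is an extension
\begin{equation*}
1 \to \bmu_n \to \Gamma_x \to K \to 1 ,
\end{equation*}
in which $\bmu_n$ is the inertia of $\cX$. The inertia of $\cZ$ relative to $X/H$ is therefore a finite \'{e}tale group scheme of order $n|K|$, which is invertible on $S$: $n$ is invertible because $\cX$ is a $\bmu_n$-gerbe over a base where we work with $n$ invertible, and $|K|$ divides $|G|$.

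To see that $p$ is a gerbe, I would check it \'{e}tale locally on $X/H$. Over a neighbourhood where $X \to X/H$ splits as $H \times V$, the stack $\cZ$ becomes $\cX|_V/K$. This is a $\bmu_n$-gerbe quotiented by a group acting trivially on the base, hence a gerbe over $V$ with band $\Gamma$.

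\emph{Step 2: describe $\Dqc(\cZ)$ by a sheaf of algebras.} Choose an \'{e}tale cover $Y' \to X/H$ over which $p$ admits a section. Over $Y'$ we then have $\cZ_{Y'} \simeq B\Gamma'$ for a finite \'{e}tale group scheme $\Gamma'$ over $Y'$ of order invertible on $S$. Consequently $\mathrm{QCoh}(\cZ_{Y'})$ is equivalent to modules over the group algebra $\cO_{Y'}[\Gamma']$.

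These local descriptions should glue to a sheaf of $\cO_{X/H}$-algebras $\mathcal{A}$ with $\mathrm{QCoh}(\cZ) \simeq \mathrm{Mod}(\mathcal{A})$. To construct $\mathcal{A}$ intrinsically, I would take $\mathcal{A} = p_*\mathcal{E}nd(\mathcal{V})^{\op}$, where $\mathcal{V}$ is a vector bundle on $\cZ$ whose fibres contain every irreducible representation of the stabilizers. A candidate is the pushforward of $\cO$ along a finite flat chart of $\cZ$. Since $p_*$ is exact (tameness), $\mathcal{V}$ is then a compact projective generator relative to $X/H$, and the equivalence follows.

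Because $|\Gamma'|$ is invertible, $\cO[\Gamma']$ is a separable algebra. Hence $\mathcal{A}$ is \'{e}tale locally a product of matrix algebras over its center $\mathcal{Z}(\mathcal{A})$, and this center is a finite \'{e}tale $\cO_{X/H}$-algebra.

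\emph{Step 3: identify $M$ and $\beta$.} Set $M \coloneqq \mathrm{Spec}_{X/H}\,\mathcal{Z}(\mathcal{A})$. Then $M \to X/H$ is finite \'{e}tale, and it is surjective since every stabilizer has at least the trivial representation. It follows that $M$ is smooth and proper over $S$. (If one insists on schemes, $M$ is at first an algebraic space; this presumably suffices for geometric origin, or one argues as the paper does elsewhere.) The algebra $\mathcal{A}$ is Azumaya over $M$, and we let $\beta \in \Br(M)$ be its class. Then
\begin{equation*}
\Dperf(\cZ) \simeq \Dperf(M,\mathcal{A}) \simeq \Dperf(M,\beta),
\end{equation*}
and this equivalence is $X/H$-linear by construction.

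\emph{Expected main obstacle.} I expect the hardest step to be the global construction in Step 2, namely producing $\mathcal{V}$ (equivalently $\mathcal{A}$) canonically enough that the local group-algebra descriptions descend. I would try first the pushforward of the structure sheaf along the finite \'{e}tale cover $\cX \to \cZ$, followed by a flat chart of the $\bmu_n$-gerbe $\cX$. If this is awkward, the fallback is to glue the categories $\Dperf(\cZ_{Y'})$ directly by \'{e}tale descent of linear categories, and then identify the center by descent as well.
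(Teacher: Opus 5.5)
Your Step 1 matches the first half of the paper's proof: $\cX/G \to X/H$ is a gerbe whose stabilizers are central extensions of $K$ by $\bmu_n$. Your description as an extension is actually more accurate than the paper's phrase ``$\bmu_n \times K$-gerbe'', since the extension need not split.

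After that the routes diverge. The paper just invokes Hotchkiss's Proposition~2.12. There, $\cM$ is the stack of fibrewise simple coherent sheaves on $\cX/G$ relative to $X/H$. It is a $\bG_m$-gerbe over a finite \'etale $M \to X/H$, $\beta$ is its class, and the equivalence is the Fourier--Mukai transform with the universal sheaf on $\cX/G \times_{X/H} \cM$. You instead run relative Morita theory with a generator $\mathcal{V}$, and take $M$ to be the spectrum of the centre of $\mathcal{A} = p_*\mathcal{E}nd(\mathcal{V})^{\mathrm{op}}$, where $p \colon \cX/G \to X/H$. When $\mathcal{V}$ exists, this gives an explicit Azumaya representative of $\beta$. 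That is exactly what is needed later for geometric origin; the paper instead gets it from quasi-projectivity of $M$ and Gabber's theorem.

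That same feature is also the catch. Since $\mathcal{A}$ would be an Azumaya algebra on $M$ of class $\beta$, a generator $\mathcal{V}$ can exist only when $\beta$ is representable by an Azumaya algebra. The same holds for a finite flat chart of $\cX/G$ by a scheme. Pulling $\mathcal{V}$ back to $\cX$, it also forces $\alpha$ to be Azumaya. This holds, for instance, when $X$ is quasi-projective over an affine base, which covers the paper's applications. It does not hold in the generality of the lemma, whose conclusion only asks for $\beta \in \rH^2_{\et}(M,\bG_m)_{\tors}$.

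So in general you need your fallback, and it works provided you define the centre without choices. Take the sheaf $U \mapsto \mathrm{End}(\mathrm{id}_{\mathrm{QCoh}((\cX/G)_U)})$ on $X/H$. \'Etale locally it is the centre of $\cO[\Gamma']$, i.e.\ functions on the finite \'etale sheaf of isomorphism classes of irreducible representations of the band. Then $\mathrm{QCoh}(\cX/G)$ is an \'etale-locally trivial twisted form of $\mathrm{QCoh}(M)$, hence the category of weight-one sheaves on a $\bG_m$-gerbe over $M$. At that point you have essentially rebuilt Hotchkiss's moduli of simple sheaves. That construction has one further advantage, used in the next proposition: the residual $G^\vee$-action is visible as tensoring simple sheaves with characters of $G$.

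Finally, your Step 1 assumes $n$ is invertible, but the lemma does not. Without that assumption the band is finite flat and linearly reductive rather than \'etale. Local sections then exist only fppf locally, and separability of the local algebras comes from linear reductivity. The argument survives this change, and in all of the paper's uses $n$ is invertible anyway.
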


\begin{proof}
    The morphism $\cX \to X$ is equivariant with respect to the homomorphism $G \to H$, so we obtain an induced morphism $\cX/G \to X/H$. 
    The quotient $X/H$ is a smooth proper scheme over $S$ because of our assumption that $H$ acts freely on $X$. 
    Moreover, it follows from the fact that $G$ acts by gerbe automorphisms that $\cX/G \to X/H$ is a $\bmu_n \times K$-gerbe. 
    Then it follows from \cite[Proposition~2.12]{hotchkiss-pi} that there exists a finite \'{e}tale cover $M \to X/H$ and a Brauer class $\beta \in \Br(Y)$ with the desired property. 
\end{proof}

Now we track what happens to the $G$-invariants of the twisted derived category $\Dperf(X, \alpha)$ under the above equivalence. 
More generally, we explain what happens when the $G$-action on $\Dperf(X, \alpha)$ factors through a quotient $G \twoheadrightarrow \bar{G}$, since this situation arises naturally via the application of our earlier results, such as Corollary~\ref{corollary-lifting-actions-in-CY-family}.\footnote{Note that there is a mismatch in notation: $G$ in the present discussion plays the role of $G'$ in Corollary~\ref{corollary-lifting-actions-in-CY-family}, while $\bar{G}$ plays the role of $G$.}

\begin{proposition}
\label{lemma-DperfXalphaG-geometric}
    In the situation of Lemma~\ref{lemma-cX-mod-G-geometric}, assume that $X$ is a quasi-projective variety over an algebraically closed field $k$ and that $n$ is invertible in $k$. 
    Let $\alpha \in \Br(X)$ be the Brauer class associated to $\cX \to X$. 
    Assume that $\pi \colon G \twoheadrightarrow \bar{G}$ is a surjective homomorphism such that the induced $G$-action on $\Dperf(X, \alpha)$ via Construction~\ref{remark-Aut-bmun-act-DXalpha} factors through a $\bar{G}$-action on $\Dperf(X, \alpha)$. 
    \begin{enumerate}
        \item \label{DXalphaG-geometric-origin} The invariant categories $\Dperf(X, \alpha)^G$ and $\Dperf(X, \alpha)^{\bar{G}}$ are of geometric origin over~$S$. 
    \end{enumerate}
    Further assume that $S = \Spec(k)$ is a point. 
    Then there exists a decomposition 
    \begin{equation*}
    M = M_0 \sqcup M_1 \sqcup \cdots \sqcup M_{n-1}
    \end{equation*} 
    and a further decomposition 
    \begin{equation*}
        M_1 = N \sqcup N' 
    \end{equation*}
    with the following properties for all $0 \leq i \leq n-1$: 
    \begin{enumerate}[resume]
        \item 
        \label{M-decompose-Mi} 
        The morphisms $M_i \to X/H$, $N \to X/H$, and $N' \to X/H$ are finite \'{e}tale covers. 
        Moreover, if $k$ has characteristic $0$, $\alpha = 0$, and $\bar{G} \subset \Aut_{X}(k) \ltimes \Pic_X(k)$, then $N$ is connected. 
        \item If $\beta_i$ denotes the restriction of $\beta$ to $M_i$ and 
        $\gamma$ the restriction to $N$, then there are $X/H$-linear equivalences 
        \begin{align}
        \label{DXialpha-G'}
            \Dperf(X, i\alpha)^{G} & \simeq \Dperf(M_i, \beta_i) , \\
        \label{DXialpha-G} 
        \Dperf(X, \alpha)^{\bar{G}} & \simeq \Dperf(N, \gamma). 
        \end{align}
    \item \label{Gvee-act-cMi}
    Let $\cM_i \to M_i$ be the $\bG_m$-gerbe of class $\beta_i$. 
There is an action of $G^{\vee} \coloneqq \Hom(G, k^{\times})$ on $\cM_i$ by gerbe automorphisms such that the induced $G^{\vee}$-action on $\Dperf(M_i, \beta_i)$ via Construction~\ref{remark-Aut-BGm-act-DXalpha} corresponds to the residual $G^{\vee}$-action on $\Dperf(X, i\alpha)^{G}$ under the equivalence~\eqref{DXialpha-G'}. 

Let $\cN \to N$ be the $\bG_m$-gerbe of class $\gamma$ (i.e. the restriction of $\cM_{1}$ to $N$). 
Then the restriction of the above $G^{\vee}$-action on $\cM_1$ along $\pi^{\vee} \colon \bar{G}^{\vee} \to G^{\vee}$ preserves $\cN$, and hence induces a $\bar{G}^{\vee}$-action on $\cN$ by gerbe automorphisms. 
The induced $\bar{G}^{\vee}$-action on $\Dperf(N, \gamma)$ via Construction~\ref{remark-Aut-BGm-act-DXalpha} corresponds to the residual $\bar{G}^{\vee}$-action on $\Dperf(X, \alpha)^{\bar{G}}$ under the equivalence~\eqref{DXialpha-G}.

\item \label{Gvee-action-cMi-decomposition} Decomposing $G^{\vee}$ with respect to the exact sequence 
        \begin{equation*}
            1 \to \Pic_{M_i}(k) \to \Aut_{\cM_i}(k) \to \Aut_{M_i}(k) 
        \end{equation*}
        of Theorem~\ref{theorem-Aut-mun-gerbe}\eqref{Aut-ses}, i.e. projecting onto $\Aut_{M_i}(k)$, 
        yields an exact sequence 
        \begin{equation*}
            1 \to H^{\vee} \to G^{\vee} \to K^{\vee} 
        \end{equation*}
        which is dual to the sequence~\eqref{KGH}. 
        In these terms, $H^{\vee}$ acts by tensoring with line bundles pulled back from $X/H$, and $K^{\vee}$ acts by covering automorphisms of $M_i$ over $X/H$. 
    \end{enumerate}
\end{proposition}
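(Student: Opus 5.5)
The plan is to work throughout with the equivalence $\Dperf(\cX/G) \simeq \Dperf(M,\beta)$ of Lemma~\ref{lemma-cX-mod-G-geometric}, and to use the completely orthogonal decomposition~\eqref{sod-cX-mod-G} to transport everything to $M$.

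For~\eqref{DXalphaG-geometric-origin}, Construction~\ref{remark-Aut-bmun-act-DXalpha} shows that $\Dperf(X,\alpha)^G$ is a semiorthogonal component of $\Dperf(\cX/G) \simeq \Dperf(M,\beta)$. The first step is to represent $\beta$ by an Azumaya algebra. Since $X$ is quasi-projective, $X/H$ is quasi-projective, so the finite cover $M$ admits an ample line bundle and \cite{dJ-gabber} applies. Then $\Dperf(M,\beta)$ is of geometric origin (as recalled for twisted derived categories in \S\ref{section-hodge-theory}), and so are its semiorthogonal components, since a component of a component is again a component.

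For $\bar G$, set $K_\pi = \ker(\pi)$. Because $K_\pi$ acts trivially on $\Dperf(X,\alpha)$, the plan is to write
\[
\Dperf(X,\alpha)^G \simeq \bigl(\Dperf(X,\alpha)^{K_\pi}\bigr)^{\bar G},
\]
where $\Dperf(X,\alpha)^{K_\pi} \simeq \Dperf(X,\alpha)\otimes \mathrm{Rep}(K_\pi)$ decomposes completely orthogonally over the irreducible representations of $K_\pi$. The trivial-representation summand should then exhibit $\Dperf(X,\alpha)^{\bar G}$ as a completely orthogonal summand of $\Dperf(X,\alpha)^G$. Care is needed here, because $\bar G$ permutes the characters and the action must be untwisted on the trivial piece. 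Granting this, $\Dperf(X,\alpha)^{\bar G}$ is again of geometric origin.

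For the decomposition of $M$, assume $S=\Spec k$. A completely orthogonal decomposition of $\Dperf(M,\beta)$ corresponds to a decomposition of $M$ into unions of connected components. The reason is that the center, namely $\HH^0$, or equivalently idempotents of $\cO(M)$, controls such splittings, and both equivalences are $X/H$-linear. Applying this first to~\eqref{sod-cX-mod-G} gives $M=\bigsqcup M_i$ together with~\eqref{DXialpha-G'}. Applying it next to the summand decomposition above gives $M_1 = N\sqcup N'$ with~\eqref{DXialpha-G}. Finite étaleness over $X/H$ is inherited from $M$. For connectedness of $N$ when $\alpha=0$ and $\bar G \subset \Aut_X\ltimes\Pic_X$, I would check that $\HH^0(\Dperf(X)^{\bar G})=k$, using that the invariants of $\HH^0(X)=k$ are $k$ in characteristic $0$.

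For~\eqref{Gvee-act-cMi}, the residual $G^\vee$-action on $\Dperf(\cX)^G=\Dperf(\cX/G)$ is tensoring by the line bundles on $B G$ pulled back along $\cX/G\to BG$. On the $\bmu_n\times K$-gerbe $\cX/G\to X/H$ these correspond to characters, and the first step here is to trace them through Hotchkiss's construction \cite[Proposition~2.12]{hotchkiss-pi}. There $M$ parametrizes irreducible representations of the stabilizer $\bmu_n\times K$ (with prescribed $\bmu_n$-weight), and $\chi\in G^\vee$ acts by twisting representations. This gives gerbe automorphisms of $\cM_i$ over $X/H$. Characters pulled back from $H$ act trivially on $K$-representations, so they act by line bundles from $X/H$. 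Characters restricting nontrivially to $K$ permute the representations, and hence act by covering automorphisms. This yields~\eqref{Gvee-action-cMi-decomposition}. Pulling back along $\pi^\vee$ fixes the trivial-$K_\pi$ locus $N$, which gives the $\bar G^\vee$ statement.

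I expect the main obstacle to be this explicit identification of the residual action with twisting representations on $M$.
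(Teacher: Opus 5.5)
Most of your outline follows the paper's proof. The shared steps are: geometric origin via an Azumaya representative of $\beta$ on the quasi-projective $M$, then passing to semiorthogonal components; computing $\Dperf(X,\alpha)^G$ as $\ker(\pi)$-invariants followed by $\bar G$-invariants and splitting off the trivial-representation summand; reading off $M=\bigsqcup M_i$ and $M_1=N\sqcup N'$ from the completely orthogonal $X/H$-linear decompositions; and letting $G^\vee$ act on Hotchkiss's moduli $\cM$ through line bundles pulled back along $\cX/G\to BG$.

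\textbf{The gap is the connectedness of $N$.} You propose to get $\HH^0(\Dperf(X)^{\bar G})=k$ from the fact that the $\bar G$-invariants of $\HH^0(X)=k$ are $k$. This tacitly assumes $\HH^0(\cC^{\bar G})\cong\HH^0(\cC)^{\bar G}$, which is false in general. Hochschild cohomology of an invariant category has twisted sectors:
\[
\HH^0(\Dperf(X)^{\bar G})\cong\Bigl(\bigoplus_{g\in\bar G}\HH^0(\Dperf(X),\phi_g)\Bigr)^{\bar G}
\]
(see \cite[Theorem~4.4]{HH-group-actions}). For example, suppose $\bar G$ acted trivially on $\Dperf(X)$. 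Then $\Dperf(X)^{\bar G}\simeq\Dperf(X)\otimes\Dperf(B\bar G)$, whose $\HH^0$ has dimension equal to the number of irreducible representations of $\bar G$. So $N$ would be a disjoint union of copies of $X$, yet your reasoning would still output $k$. Your argument never uses the hypothesis $\bar G\subset\Aut_X(k)\ltimes\Pic_X(k)$, nor the freeness of $H$, and these are exactly what kill the twisted sectors. Write $g=(f,L)$:
\begin{itemize}
\item If $f\neq\id_X$, then $f$ is a nontrivial element of $H$, since the automorphism part of $\phi_g$ is the image in $\Aut_X(k)$ of any lift of $g$ to $G$. So $f$ has empty fixed locus, and $\HH^0(\Dperf(X),\phi_g)=0$ (compare \cite[Proof of Theorem~3.2]{inner-product}).
\item If $f=\id_X$ and $g\neq 1$, then $L$ is a nontrivial torsion line bundle, because $\bar G$ embeds in $\Aut_X(k)\ltimes\Pic_X(k)$. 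Hence $\HH^0(\Dperf(X),\phi_g)\cong\rH^0(X,L)=0$.
\end{itemize}
Only after this does the formula reduce to $\HH^0(\Dperf(X))^{\bar G}=k$.

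Relatedly, you have misplaced the role of characteristic $0$. Taking invariants of a trivial action on $k$ needs no hypothesis on the characteristic. In the paper, characteristic $0$ guarantees that the period of $\gamma$ is invertible. This gives $\HH^0(\Dperf(N,\gamma))\cong\HH^0(\Dperf(N))=\cO(N)$ via \cite[Lemma~5.17]{PI-abelian3folds}. You need that identification, for the \emph{twisted} category $\Dperf(N,\gamma)$, to turn $\HH^0(\Dperf(X)^{\bar G})=k$ into connectedness of $N$.
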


\begin{proof}
    First we spell out some details about the equivalence $\Dperf(\cX/G) \simeq \Dperf(M, \beta)$. 
    By the construction from \cite[Proposition~2.12]{hotchkiss-pi} invoked in the proof of Lemma~\ref{lemma-cX-mod-G-geometric}, 
    $\cM \to X/H$ is the moduli stack of simple coherent sheaves on the gerbe $\cX/G \to X/H$. 
    This is naturally a $\bG_m$-gerbe $\cM \to M$ of class $\beta$, and the equivalence $\Dperf(\cX/G) \simeq \Dperf(M, \beta)$ is induced by the universal family on $\cX/G \times_{X/H} \cM$. 

    Note that we have a morphism $\cX/G \to BG$. Since the group $G^{\vee}$ corresponds to line bundles on $BG$, pulling back and tensoring we see that $G^{\vee}$ naturally acts on simple coherent sheaves on $\cX/G$, and hence on $\cM$. 
    By construction, it is straightforward to see that this $G^{\vee}$-action on $\cM$ is by gerbe automorphisms and satisfies the 
following properties: 
\begin{itemize}
    \item The induced $G^{\vee}$-action on $\Dperf(M, \beta)$ via Construction~\ref{remark-Aut-BGm-act-DXalpha} corresponds to the residual $G^{\vee}$-action on $\Dperf(\cX/G)$ under the equivalence of Lemma~\ref{lemma-cX-mod-G-geometric}. 
    \item Decomposing $G^{\vee}$ with respect to the exact sequence 
        \begin{equation*}
            1 \to \Pic_{M/S}(S) \to \Aut_{\cM/S}(S) \to \Aut_{M/S}(S) 
        \end{equation*}
        of Theorem~\ref{theorem-Aut-mun-gerbe}\eqref{Aut-ses} 
        yields an exact sequence 
        \begin{equation*}
            1 \to H^{\vee} \to G^{\vee} \to K^{\vee} 
        \end{equation*}
        which is dual to the sequence~\eqref{KGH}. 
        In these terms, $H^{\vee}$ acts by tensoring with line bundles pulled back from $X/H$, and $K^{\vee}$ acts by covering automorphisms of $M$ over $X/H$.
\end{itemize}

Furthermore, combining the semiorthogonal decomposition~\eqref{sod-cX-mod-G} with the equivalence of Lemma~\ref{lemma-cX-mod-G-geometric} gives an $X/H$-linear completely orthogonal decomposition 
\begin{equation}
\label{DMbeta-DXalphas}
    \Dperf(M, \beta) = \langle \Dperf(X)^{G}, \Dperf(X,\alpha)^{G}, \dots, \Dperf(X, (n-1)\alpha)^{G} \rangle. 
\end{equation}
Let $A = \ker(\pi)$, so that we have an exact sequence
 \begin{equation*}
     1 \to A \to G \to \bar{G} \to 1.
 \end{equation*}
Then by \cite[Lemma~3.6]{PS-crepant-res} we may compute $\Dperf(X,\alpha)^{G}$ by first taking $A$-invariants, and then $\bar{G}$-invariants. 
 By assumption, the $A$-action is trivial, so we obtain 
 \begin{equation}
 \label{DXalphaG}
     \Dperf(X, \alpha)^{G}  \simeq (\Dperf(X,  \alpha)^{A})^{\bar{G}} 
      \simeq (\Dperf(X, \alpha) \otimes \Dperf(BA))^{\bar G} . 
 \end{equation}
 Here, $\bar G$ acts on $\Dperf(X, \alpha)$ through the given $\bar{G}$-action, and on $\Dperf(BA)$ via covering automorphisms of the $\bar{G}$-cover $BA \to BG$. 
 Let $V_0, \dots, V_{m-1}$ be the irreducible finite-dimensional representations of $A$ over $k$, with $V_0 = k$ the trivial representation. 
 Then we have a completely orthogonal decomposition $\Dperf(BA) = \langle V_0, \dots, V_{m-1} \rangle$. 
 In terms of this decomposition, $\bar{G}$ acts trivially on $V_0$, but in general nontrivially on its complement ${^\perp}\langle V_0 \rangle = \langle V_1, \dots, V_{m-1} \rangle$. 
 Thus if we set $\cD = \Dperf(X,\alpha) \otimes {^\perp}\langle V_0 \rangle$, then we obtain a $\bar{G}$-equivariant completely orthogonal decomposition 
 \begin{equation*}
     \Dperf(X, \alpha) \otimes \Dperf(BA) = \langle \Dperf(X, \alpha) \otimes V_0, \cD \rangle. 
 \end{equation*}
 Taking $\bar{G}$-invariants and combining with~\eqref{DXalphaG}, we obtain a completely orthogonal decomposition 
 \begin{equation}
 \label{DXalphaG-barG}
     \Dperf(X,\alpha)^G \simeq \langle  \Dperf(X, \alpha)^{\bar G} \otimes V_0, \cD^{\bar G} \rangle. 
 \end{equation}

With the above preliminaries in place, we prove the statements in the proposition. 
Since $X$ is quasi-projective, so is the quotient $X/H$ and thus also the \'{e}tale cover $M \to X/H$. 
In particular, this implies that $\beta$ is represented by an Azumaya algebra, and hence $\Dperf(M, \beta)$ is of geometric origin over $S$. 
In view of~\eqref{DMbeta-DXalphas} and~\eqref{DXalphaG-barG}, the categories $\Dperf(X, \alpha)^G$ and $\Dperf(X, \alpha)^{\bar{G}}$ are both semiorthogonal components of $\Dperf(M,\beta)$, so they are also of geometric origin over $S$. 
This proves~\eqref{DXalphaG-geometric-origin}. 

Since the $X/H$-linear semiorthogonal decomposition~\eqref{DMbeta-DXalphas} is completely orthogonal, it must correspond to a decomposition of $M$ into 
$M_i$ which dominate $X/H$ (hence are also finite \'{e}tale over it) such that the equivalence~\eqref{DXialpha-G'} holds. 
Then the claims in~\eqref{Gvee-act-cMi} about $\cM_i$ and~\eqref{Gvee-action-cMi-decomposition} follow from the analogous properties for $\cM \to M$ stated above. 

Similarly, the completely orthogonal decomposition~\eqref{DXalphaG-barG} of $\Dperf(M_1, \beta_1) \simeq \Dperf(X,\alpha)^G$ corresponds to a decomposition $M_1 = N \sqcup N'$ where $N \to X/H$ and $N' \to X/H$ are finite \'{e}tale covers such that 
\begin{equation*}
    \Dperf(N, \gamma) \simeq \Dperf(X, \alpha)^{\bar{G}}   \quad \text{and} \quad 
    \Dperf(N', \gamma') \simeq \cD^{\bar{G}} , 
\end{equation*}
where $\gamma$ and $\gamma'$ denote the restriction of $\beta$ to $N$ and $N'$. 
The first of these equivalences gives~\eqref{DXialpha-G}, while the claims in~\eqref{Gvee-act-cMi} about $\cN \to N$ follow from the construction. 

All that remains to prove is the claim in~\eqref{M-decompose-Mi}  that if $k$ has characteristic $0$, $\alpha = 0$, and $\bar{G} \subset \Aut_{X}(k) \ltimes \Pic_X(k)$, then $N$ is connected. 
This is equivalent to the degree $0$ Hochschild cohomology of $N$ being a copy of the scalars, i.e. $\HH^0(\Dperf(N)) = k$. 
On the other hand, there are isomorphisms 
\begin{equation*}
    \HH^0(\Dperf(N)) \cong 
    \HH^0(\Dperf(N, \gamma)) \cong 
    \HH^0(\Dperf(X)^{\bar{G}})
\end{equation*}
where the first holds by \cite[Lemma~5.17]{PI-abelian3folds}\footnote{It is here that we use the assumption that $k$ has characteristic $0$, in order to ensure the hypothesis of \cite[Lemma~5.17]{PI-abelian3folds} that the period of $\gamma$ is invertible in $k$.} and the second by~\eqref{DXialpha-G} (and our assumption $\alpha = 0$). 
The final term can be computed using \cite[Theorem~4.4]{HH-group-actions}, which gives 
\begin{equation*}
    \textstyle \HH^0(\Dperf(X)^{\bar{G}}) \cong 
    \left( \bigoplus_{g \in \bar{G}} \HH^0(\Dperf(X), \phi_{g}) \right)^{\bar G}
\end{equation*}
where $\HH^0(\Dperf(X), \phi_{g})$ denotes Hochschild cohomology with coefficients in the autoequivalence $\phi_g$ corresponding to $g \in \bar{G}$. 
Write $g = (f, L) \in \Aut_{X}(k) \ltimes \Pic_X(k)$. 
If $f \neq \id_{X}$, then by our assumption that $H$ acts freely on $X$, it follows that the fixed locus of $f$ in $X$ is empty; then a computation shows that $\HH^0(\Dperf(X), \phi_g) = 0$, since it can be expressed in terms of coherent cohomology on this fixed locus (see for instance \cite[Proof of Theorem 3.2]{inner-product}). 
Similarly, a computation shows that if $g = (\id_{X}, L)$, then $\HH^0(\Dperf(X), \phi_g) \cong \rH^0(X, L)$; if $L$ is not the trivial line bundle, then since it is torsion this space must vanish. 
Altogether, this shows that the above formula reduces to 
\begin{equation*}
  \textstyle \HH^0(\Dperf(X)^{\bar{G}}) \cong 
    \HH^0(\Dperf(X))^{\bar G}. 
\end{equation*} 
Since $X$ is connected, $\HH^0(\Dperf(X)) = k$ and the $\bar{G}$-action on it is trivial. 
We conclude that 
$\HH^0(\Dperf(X)^{\bar{G}}) = k$, as required.
\end{proof}


\section{Semiregularity theorems} 
\label{section-semiregularity-theorems} 

In this section, we combine the above ingredients to prove our semiregularity theorems. 

\subsection{Noncommutative varieties}

\begin{theorem}
\label{theorem-semiregularity-categorical}
Let $\cC$ be a smooth proper $S$-linear category of geometric origin, where $S$ is a complex variety. 
Let $v$ be a section of $\Ktop[0](\cC/S)$.  
Let $0 \in S(\bC)$ be a point and let $E_0 \in \cC_0$ be a gluable semiregular object of class $v_0$. Then: 
\begin{enumerate}
    \item There exists an \'{e}tale morphism $U \to S$, a point $u \in U(\bC)$ mapping to $0 \in S(\bC)$, and an object $F \in \cC_{U}$ such that $F_{u} \simeq E_{0}$ (under the identification $\cC_{u} = \cC_0$). 
    \item For every point $s \in S(\bC)$, the fiber $v_s \in \Ktop[0](\cC_s)$ of $v$ is algebraic. 
\end{enumerate}
\end{theorem}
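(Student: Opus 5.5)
The plan is to deduce both parts from the smoothness result, Theorem~\ref{theorem-semiregular-smooth}, together with standard facts about smooth morphisms and the algebraicity of $\cM(\cC/S,v)$.

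\emph{Part (1).} By Theorem~\ref{theorem-semiregular-smooth}, the morphism $\cM(\cC/S,v) \to S$ is smooth at the $\bC$-point given by $E_0$. Since $\cM(\cC/S,v)$ is an algebraic stack locally of finite presentation over $S$, I would choose a smooth atlas $W \to \cM(\cC/S,v)$ from a scheme and a point $w \in W(\bC)$ lying over $[E_0]$. After shrinking $W$ around $w$, the composite $W \to S$ is smooth at $w$, so I may assume $W \to S$ is smooth. A smooth morphism of schemes admits sections étale locally at $\bC$-points: there is an étale morphism $U \to S$, a point $u \in U(\bC)$ over $0$, and an $S$-morphism $U \to W$ sending $u$ to $w$ (use EGA IV 17.16.3). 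The composite $U \to W \to \cM(\cC/S,v)$ then corresponds to an object $F \in \cC_U$, gluable and of class $v_U$, whose fiber $F_u$ is the object classified by the image of $w$, namely $E_0$. This proves (1).

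\emph{Part (2).} Let $s \in S(\bC)$. Since $S$ is a complex variety, it is irreducible, so $U \to S$ is étale, hence open, and its image is a dense open subset $V \subset S$ containing $0$. For $s$ in $V$, I pick $u' \in U(\bC)$ over $s$. Then $F_{u'} \in \cC_s$ has class $v_s$, because $v_F = v_U$ and the formation of $\Ktop[0]$ is compatible with base change to fibers. So $v_s$ is algebraic for all $s \in V(\bC)$.

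To pass from the dense open $V$ to all of $S$, I would use that the locus where a flat section of a local system (here $\Ktop[0](\cC/S)\otimes\bQ$, a summand of $\rR^{\ev}f_*\bQ$ for a geometric realization $\cC \subset \Dperf(X)$) is algebraic is a countable union of closed algebraic subvarieties. If this locus contains a dense open subset, it contains all of $S$. Concretely, I would spread out: the relative moduli $\cM(\cC/S,v)$ has countably many finite-type pieces. One of these pieces dominates $S$, because its image contains $V$. Then I would take a specialization argument. Given $s$, I choose a smooth curve $T \to S$ through $s$ meeting $V$, and a dominant finite-type component $Z \to T$ of $\cM(\cC_T/T,v)$ hitting the generic point of $T$. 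Its closure may fail to be proper, so instead I would specialize the K-theory class. The family $F$ over the generic point of $T$ defines a class in $\rK_0(\cC_{\eta})$. Using that $\cC_T \subset \Dperf(X_T)$ and that $\rK_0$ classes on the generic fiber extend to $X_T$ (regularity of $X_T$, via $\rK_0$ of coherent sheaves and restriction to open subsets being surjective), this yields an algebraic class on $X_T$. Projecting back to $\cC_T$ using the semiorthogonal projection functor and restricting to the fiber $\cC_s$ gives an algebraic class in $\Ktop[0](\cC_s)$. Because it agrees with $v$ over the generic fiber, and $v$ is a flat section, it equals $v_s$.

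The main obstacle is this last specialization step, extending algebraicity from the dense open $V$ to every fiber. Everything else is formal once Theorem~\ref{theorem-semiregular-smooth} is known. I expect to use specialization of $\rK_0$ on $X_T$ combined with the projection onto $\cC$, and possibly Remark~\ref{remark-semiregular-smooth-theorem}\eqref{semiregular-smooth-theorem-rational} to work rationally so that possible denominators cause no trouble.
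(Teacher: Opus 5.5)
Your proposal is correct and follows essentially the paper's route: the paper obtains both parts from Theorem~\ref{theorem-semiregular-smooth} by citing \cite[Proposition~8.1]{IHC-CY2}, where the \'{e}tale-local object $F$ of (1) is produced in the course of the proof just as you do, and (2) is the conclusion. One step in your specialization argument needs tightening, because $F$ lives over the \'{e}tale $U$ and not over an open subset of $S$. Over the generic point of $T$ you therefore only get an object after a finite extension of $\bC(T)$. To fix this, replace $T$ by the normalization of $T$ in the function field of a component of $U \times_S T$ dominating $T$. This is a smooth curve, finite over $T$, so it contains a point over $s$, and $X$ is still regular over it. Do not push the class forward instead, since that would only show that a multiple of $v_s$ is algebraic.
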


\begin{proof}
    In view of Theorem~\ref{theorem-semiregular-smooth}, the second statement is \cite[Proposition 8.1]{IHC-CY2}, while the first statement is shown in the course of the proof there. 
\end{proof}

\begin{remark}
\label{remark-semiregularity-categorical-rational}
    As in Remark~\ref{remark-semiregular-smooth-theorem}\eqref{semiregular-smooth-theorem-rational}, there is slight variant of Theorem~\ref{theorem-semiregularity-categorical} --- which holds by the same proof --- where we consider a section of $\Ktop[0](\cC/S)\otimes \bQ$ instead of $\Ktop[0](\cC/S)$. 
\end{remark}

We can bootstrap from this to an equivariant semiregularity theorem: 

\begin{theorem}
    \label{theorem-semiregularity-categorical-equivariant}
Let $\cC$ be a smooth proper $S$-linear category of geometric origin, where $S$ is a complex variety. 
Let $G$ be a finite group which acts on $\cC$ over $S$ in such a way that $\cC^G$ is smooth and proper of geometric origin over $S$. 
Let $v$ be a section of $\Ktop[0](\cC/S) \otimes \bQ$.  
Let $0 \in S(\bC)$ be a point and let $E_0 \in \cC_0$ be a gluable weakly $G$-semiregular object of class $v_0$. 
Let $\wtilde{E}_0 \in \cC^G_0$ be a semiregular object such that $\Forg(\wtilde{E}_0) \simeq E_0$, and 
assume that there exists a section $\wtilde{v}$ of $\Ktop[0](\cC^G/S) \otimes \bQ$ such that the class of $\wtilde{E}_0$ is $\wtilde{v}_0$. 
Then: 
\begin{enumerate}
    \item There exists an \'{e}tale morphism $U \to S$, a point $u \in U(\bC)$ mapping to $0 \in S(\bC)$, and an object $F \in \cC_{U}$ such that $F_{u} \simeq E_{0}$ (under the identification $\cC_{u} = \cC_0$). 
    \item For every point $s \in S(\bC)$, the fiber $v_s \in \Ktop[0](\cC/S) \otimes \bQ$ of $v$ is algebraic. 
\end{enumerate}
\end{theorem}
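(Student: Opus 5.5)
The plan is to apply Theorem~\ref{theorem-semiregularity-categorical}, in the rational variant of Remark~\ref{remark-semiregularity-categorical-rational}, to the invariant category $\cC^G$ in place of $\cC$, and then push the conclusions back to $\cC$ along the forgetful functor $\Forg \colon \cC^G \to \cC$. By hypothesis $\cC^G$ is a smooth proper $S$-linear category of geometric origin, and $\wtilde{E}_0 \in \cC^G_0$ is semiregular of class $\wtilde{v}_0$, where $\wtilde{v}$ is a section of $\Ktop[0](\cC^G/S) \otimes \bQ$. The one extra hypothesis to check is that $\wtilde{E}_0$ is gluable. This holds because the mapping complex in $\cC^G$ is computed as $\cHom(\wtilde{E}_0,\wtilde{E}_0) \simeq \cHom(E_0,E_0)^G$. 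Since $|G|$ is invertible over $\bC$, taking invariants is exact, so $\Ext^{<0}(\wtilde{E}_0,\wtilde{E}_0) = \Ext^{<0}(E_0,E_0)^G = 0$.

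For part (1), Theorem~\ref{theorem-semiregularity-categorical} gives an étale $U \to S$, a point $u \mapsto 0$, and $\wtilde{F} \in \cC^G_U$ with $\wtilde{F}_u \simeq \wtilde{E}_0$. Set $F = \Forg(\wtilde{F}) \in \cC_U$. The forgetful functor is $S$-linear, so it commutes with base change to the fiber; here I use that $(\cC^G)_U \simeq (\cC_U)^G$ and that forming invariants commutes with base change for finite groups of invertible order (as in \cite{bayer-perry}). Hence $F_u \simeq \Forg(\wtilde{F}_u) \simeq \Forg(\wtilde{E}_0) \simeq E_0$.

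For part (2), Theorem~\ref{theorem-semiregularity-categorical} also gives that $\wtilde{v}_s$ is algebraic for every $s \in S(\bC)$, say $\wtilde{v}_s = [\wtilde{P}]$ with $\wtilde{P}$ in $\rK_0(\cC^G_s) \otimes \bQ$. The map $\Forg_* \colon \Ktop[0](\cC^G/S) \otimes \bQ \to \Ktop[0](\cC/S) \otimes \bQ$ is a morphism of local systems compatible with the maps from $\rK_0$, so $\Forg_*(\wtilde{v}_s) = [\Forg(\wtilde{P})]$ is algebraic. The remaining task is to show $\Forg_*(\wtilde{v}) = v$. At $0$ we have $\Forg_*(\wtilde{v}_0) = [\Forg(\wtilde{E}_0)] = [E_0] = v_0$. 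Both $\Forg_*(\wtilde{v})$ and $v$ are global sections of a local system that agree at $0$, so they agree on the connected component of $S$ containing $0$. Since $S$ is a variety, it is connected, so $v_s = \Forg_*(\wtilde{v}_s)$ is algebraic for all $s$.

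The main obstacle I expect is bookkeeping rather than substance: checking that $\Forg$ induces a morphism of local systems on $\Ktop[0]$ compatible with fibers and with the algebraic classes, and that the formation of $\cC^G$ commutes with base change along $U \to S$ and to points. Lemma~\ref{lemma-Ktop-G} is not needed here, because the section $\wtilde{v}$ is assumed to exist; in applications, that lemma is what supplies $\wtilde{v}$ from $v$.
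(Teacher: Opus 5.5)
Your proposal is correct and is essentially the paper's proof. As in the paper, you check that $\wtilde{E}_0$ is gluable via $\Ext^i(\wtilde{E}_0,\wtilde{E}_0) \cong \Ext^i(E_0,E_0)^G$, apply Theorem~\ref{theorem-semiregularity-categorical} (rational variant) to $\cC^G$, and push $\wtilde{F}$ and the algebraic classes $\wtilde{v}_s$ forward along $\Forg$; your connectedness argument for $\Forg_*(\wtilde{v}) = v$ just spells out what the paper asserts in one line.
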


\begin{proof}
    Note that $\wtilde{E}_0$ is gluable, since $E_0$ is gluable and $\Ext^i(\wtilde{E}_0, \wtilde{E}_0) \cong \Ext^i(E_0, E_0)^G$ for any $i \in \bZ$. 
    Hence we may apply Theorem~\ref{theorem-semiregularity-categorical-equivariant} (in the form of Remark~\ref{remark-semiregularity-categorical-rational}) to conclude that there exists an \'{e}tale neighborhood $U \to S$ of $0 \in S(\bC)$ over which $\wtilde{E}_0$ extends to an object $\wtilde{F}$, and that the class $\wtilde{v}_s \in \Ktop[0](\cC^G_s)$ is algebraic for every $s \in S(\bC)$. 

    Then $F = \Forg(\wtilde{F})$ gives the required deformation of $E_0$, and the class of $F$ is the section $\Forg(\wtilde{v}) = v$, so that $v_s$ is algebraic for every $s \in S(\bC)$. 
\end{proof}

\begin{remark}
\label{remark-equivariant-semiregulary-conditions}
    Theorem~\ref{theorem-semiregularity-categorical-equivariant} has some limitations: 
    \begin{enumerate}
        \item \label{CG-smooth-proper} For a given $G$-action, it may be difficult to verify that $\cC^G$ is of geometric origin. 
        \item \label{KtopG-relation}
        In general, there does not seem to be a simple formula for $\Ktop[0](\cC^G/S) \otimes \bQ$ in terms of $\Ktop[0](\cC) \otimes \bQ$,  so it may be difficult to verify the existence of the required section $\wtilde{v}$.
    \end{enumerate}
Regarding~\eqref{CG-smooth-proper}, we expect the assumption that $\cC^G$ is of geometric origin is not essential in Theorem~\ref{theorem-semiregularity-categorical-equivariant}, but removing this hypothesis would require some other unknown conjectures about the Hodge theory of smooth proper categories. In this sense, we view issue~\eqref{CG-smooth-proper} as one of a technical nature. 
Moreover, as we have explained in \S\ref{section-geometrization}, in many interesting cases it is possible to verify that $\cC^G$ is of geometric origin.  

Regarding~\eqref{KtopG-relation}, 
by Lemma~\ref{lemma-Ktop-G} for abelian groups $G$ there is always an identification between the invariant parts $\Ktop[0](\cC^G/S)^{G^{\vee}} \otimes \bQ$ and $\Ktop[0](\cC/S)^G \otimes \bQ$. 
Moreover, by Lemma~\ref{lemma-G-action-Ktop-trivial} if the actions of $G^{\vee}$ and $G$ on $\cC^G$ and $\cC$ are through the identity components of the groups of autoequivalences over a point $0 \in S(\bC)$, then $\Ktop[0](\cC^G/S) \otimes \bQ$ and $\Ktop[0](\cC/S) \otimes \bQ$ are themselves identified. 
In this situation, the existence of the required section $\wtilde{v}$ is automatic.  

Theorem~\ref{theorem-semiregularity-equivariant-intro} proved below is one important situation where both difficulties~\eqref{CG-smooth-proper} and~\eqref{KtopG-relation} can be overcome along the above lines. 
\end{remark}

\begin{remark}
    There is a variant of Theorem~\ref{theorem-semiregularity-categorical-equivariant} where we consider sections of $\Ktop[0](\cC/S)$ and $\Ktop[0](\cC^G/S)$ instead of their rationalizations. 
    We formulated the theorem with rational coefficients, since it is more natural for applications. 
    Indeed, in Remark~\ref{remark-equivariant-semiregulary-conditions} we discussed a criterion for the existence of the section~$\wtilde{v}$ required in Theorem~\ref{theorem-semiregularity-categorical-equivariant}, but this criterion only ensures the existence of $\wtilde{v}$ as a section of $\Ktop[0](\cC^G/S) \otimes \bQ$. 
\end{remark}

\subsection{Twisted derived categories} 
Finally, we prove the semiregularity theorems for twisted derived categories promised in the introduction. 

\begin{proof}[Proof of Theorem~\ref{theorem-semiregularity-intro}]
    Write $B_0 = b_0/n$ for some $b_0 \in \rH^2(X, \bZ(1))$ and $n \in \bZ$. 
    Let $\theta_0$ be the image of $b_0$ under the map
    \begin{equation*}
            \exp(-/n) \colon \rH^2(X_0, \bZ(1)) \to \rH^2(X_0^{\an}, \bmu_n) = \rH^2_{\et}(X_0, \bmu_n). 
    \end{equation*}
    By the (proof of)~\cite[Lemma~3.5]{dJP-pi}, up to passing to an \'{e}tale neighborhood of $0 \in S(\bC)$, we may assume that there is an element $\theta \in \rH^2_{\et}(X, \bmu_n)$ whose restriction to the fiber $X_0$ is equal to $\theta_0$. 
    Let $\alpha \in \Br(X)$ be the Brauer class determined by $\theta$. 
    Note that $\alpha_0 = 0$ by the assumption that $B_0 \in \rH^2(X_0, \bQ(1))$ is algebraic. 
    Moreover, in view of the assumption that $w_0 = \exp(B_0) \cdot \ch(E_0)$ remains Hodge along $S$, it follows from Remark~\ref{remark-trivial-brauer-class} and Theorem~\ref{theorem-mukai-variation}\eqref{mukai-variation-section} that the class $v_{0} \in \Ktop[0](\Dperf(X_0, \alpha_0)) \otimes \bQ$ of $E_0$ lifts to a global section $v$ of the local system $\Ktop[0](\Dperf(X,\alpha)/S)\otimes \bQ$. 
    Now the result is a consequence Theorem~\ref{theorem-semiregularity-categorical} (in the form of Remark~\ref{remark-semiregularity-categorical-rational}). 
\end{proof}

\begin{proof}[Proof of Theorem~\ref{theorem-semiregularity-equivariant-intro}]
As in the proof of Theorem~\ref{theorem-semiregularity-intro} above, up to passing to an \'{e}tale neighborhood of $0 \in S(\bC)$, 
we may assume there is a $\bmu_n$-gerbe $\cX \to X$ (classified by the element $\theta \in \rH^2_{\et}(X, \bmu_n)$ above) such that: 
\begin{enumerate}
    \item The fiber $\cX_0 \to X_0$ is essentially trivial, i.e. if $\alpha \in \Br(X)$ denotes the Brauer class associated to $\cX \to X$, then the restriction $\alpha_0 = 0 \in \Br(X_0)$ vanishes.
    \item \label{section-v-proof-equivariant} The class $v_{0} \in \Ktop[0](\Dperf(X_0, \alpha_0)) \otimes \bQ$ of $E_0$ lifts to a global section $v$ of the local system $\Ktop[0](\Dperf(X,\alpha)/S)\otimes \bQ$. 
\end{enumerate} 

By shrinking $S$ if necessary, we may assume that $\omega_{X/S}$ is the pullback of a line bundle on~$S$. 
Then by Corollary~\ref{corollary-lifting-actions-in-CY-family}, up to possibly replacing $\cX$ with a suitable pushout and passing to an \'{e}tale neighborhood of $0 \in S(\bC)$, there exists: 
\begin{enumerate}[resume]
    \item \label{G-action-deform-DX0-proof}
    A $G$-action on the $S$-linear category $\Dperf(X,\alpha)$ whose base change to $0 \in S(\bC)$ recovers the given $G$-action on $\Dperf(X_0, \alpha_0) = \Dperf(X_0)$. 
    \item A finite group $G'$, a surjection $\pi \colon G' \twoheadrightarrow G$, and a $G'$-action on $\cX$ by gerbe automorphisms such that:
    \begin{enumerate}
        \item the induced $G'$-action on $\Dperf(X,\alpha)$ via Construction~\ref{remark-Aut-bmun-act-DXalpha} recovers the $G'$-action on $\Dperf(X, \alpha)$ obtained from the $G$-action in~\eqref{G-action-deform-DX0} by extension along the homomorphism $\pi \colon G' \to G$; and 
    \item \label{image-in-Autn} 
    the induced homomorphism $G' \to \Aut_{X/S}(S)$ has image contained in the subgroup $\Aut_{X/S}^0[n](S)$. 
    \end{enumerate}
\end{enumerate}

Note that since $X/S$ is an abelian scheme, it follows from condition~\eqref{image-in-Autn} that the image $H$ of $G'$ in $\Aut_{X/S}(S)$ acts freely on $X$. 
Thus we may apply Proposition~\ref{lemma-DperfXalphaG-geometric}\eqref{DXalphaG-geometric-origin} (with $G$ taken to be our $G'$ and $\bar{G}$ our $G$) to conclude that the invariant category $\Dperf(X,\alpha)^G$ is smooth and proper of geometric origin over $S$. 

To finish the proof, we would like to apply Theorem~\ref{theorem-semiregularity-categorical-equivariant} to $\cC = \Dperf(X, \alpha)$ with the $G$-action from~\eqref{G-action-deform-DX0-proof} above, the section $v$ of $\Ktop[0](\Dperf(X,\alpha)/S) \otimes \bQ$ from~\eqref{section-v-proof-equivariant}, 
and the given gluable weakly $G$-semiregular object $E_0 \in \Dperf(X_0, \alpha_0) = \Dperf(X_0)$. 
The only remaining hypothesis to verify is that if $\wtilde{E}_0 \in \Dperf(X_0)^G$ is a semiregular object such that $\Forg(\wtilde{E}_0) \simeq E_0$, 
then the class of $\wtilde{E}_0$ extends to section $\wtilde{v}$ of $\Ktop[0](\cC^G/S)\otimes \bQ$.  
As explained in Remark~\ref{remark-equivariant-semiregulary-conditions}, 
it suffices to show that $G^{\vee}$ and $G$ act on $\Dperf(X_0)^G$ and $\Dperf(X_0)$ through the identity components of the groups of autoequivalences. 

For the $G$-action on $\Dperf(X_0)$, this is true since by assumption $G \subset X_0 \times X_0^{\vee}$.  
For the $G^{\vee}$-action on $\Dperf(X_0)^G$, we first describe the action explicitly. 
Let $H_0 \subset \Aut_{X_0}^0(\bC) = X_0(\bC)$ denote the image of $G$. 
As noted above, $H_0$ acts freely on $X_0$, so that $X_0/H_0$ is an abelian variety. 
We claim that there exists an isogeny of abelian varieties $N \to X_0/H_0$  
and a Brauer class $\gamma \in \Br(N)$ such that: 
\begin{enumerate}[resume]
    \item \label{DX0-Ngamma}
    There is an equivalence $\Dperf(X_0)^G \simeq \Dperf(N, \gamma)$.
    \item Denoting by $\cN \to N$ the $\bG_m$-gerbe of class $\gamma$, there is a $G^{\vee}$-action on $\cN$ by gerbe automorphisms such that: 
    \begin{enumerate}
        \item the induced $G^{\vee}$-action on $\Dperf(N, \gamma)$ via Construction~\ref{remark-Aut-BGm-act-DXalpha} corresponds to the residual $G^{\vee}$-action on $\Dperf(X_0)^G$ under the equivalence from~\eqref{DX0-Ngamma}; and 
        \item \label{Gvee-act-through-Aut0N} the group $G^{\vee}$ acts on $\cN$ through the identity component $\Aut^0_{\cN}$, and hence on $\Dperf(X_0)^G$ through the identity component $\Aut_{\Dperf(X_0)^G}^0$. 
    \end{enumerate}
\end{enumerate}
To see this, we may apply Proposition~\ref{lemma-DperfXalphaG-geometric} to $\cX_0 \to X_0$ with its $G'$-action by gerbe automorphisms constructed above.  
By part~\eqref{M-decompose-Mi} of Proposition~\ref{lemma-DperfXalphaG-geometric}, 
$N \to X_0/H_0$ is a connected finite \'{e}tale cover, and hence an isogeny of abelian varieties. 
Note that torsion line bundles on $N$ are contained in $\Pic^0_N(\bC)$ and covering automorphisms of the isogeny $N \to X_0/H_0$ are given by translation by elements in the kernel, and hence are contained in $\Aut^0_N(\bC)$. 
Thus parts~\eqref{Gvee-act-cMi} and \eqref{Gvee-action-cMi-decomposition} of Proposition~\ref{lemma-DperfXalphaG-geometric} imply that $G^{\vee}$ does indeed act on $\cN$ through the identity component $\Aut_{\cN}^0$. 
\end{proof}

\begin{remark}
    In Theorem~\ref{theorem-semiregularity-equivariant-intro}, if we assume that $G \subset X_0$ acts only by translations, the proof above can be considerably simplified. 
    The point is that, in this case, the invariant category $\Dperf(X_0)^G$ is directly identified with the derived category of the abelian variety $X_0/G$. 
    For this reason, the use of the technical results from \S\ref{section-geometrization} may be circumvented.  
    However, the general version of Theorem~\ref{theorem-semiregularity-equivariant-intro} for arbitrary subgroups $G \subset X_0 \times X_0^{\vee}$ is much more convenient for applications. 
    For instance, as discussed in \S\ref{section-simplifying-markman}, 
    the more general situation arises naturally in Markman's approach to the Hodge conjecture for abelian varieties. 
\end{remark}


\newcommand{\etalchar}[1]{$^{#1}$}
\providecommand{\bysame}{\leavevmode\hbox to3em{\hrulefill}\thinspace}
\providecommand{\MR}{\relax\ifhmode\unskip\space\fi MR }
\providecommand{\MRhref}[2]{%
  \href{http://www.ams.org/mathscinet-getitem?mr=#1}{#2}
}
\providecommand{\href}[2]{#2}


\end{document}